\def\N{\mathbb{N}}
\def\R{\mathbb{R}}
\def\Z{\mathbb{Z}}
\def\bP{\mathbb{P}}
\def\E{\mathbb{E}}
\def\T{\mathbb{T}}
\def\vep{\varepsilon}
\theoremstyle{plain}
\newtheorem{theorem}{Theorem}[section]
 \newtheorem{proposition}[theorem]{Proposition}
 \newtheorem{lemma}[theorem]{Lemma}
\newtheorem{remark}{Remark}[section]
\author{Lianghui Luo\footnote{Institut de Math\'ematiques de Toulouse, UMR5219. Universit\'e de Toulouse; CNRS. UPS, F-31062 Toulouse Cedex 9, France. E-mail: lianghui.luo@math.univ-toulouse.fr}}
\title{Precise upper deviation estimates for the maximum of a branching random walk}
\begin{document}
\maketitle
\begin{abstract}
We consider the precise upper large deviations estimates for the maximal displacement of a branching random walk. In addition, we obtain a description of the extremal process of the branching random walk conditioned on this large deviations event. This introduces a family of point measure playing a role similar to the decoration measures introduced in \cite{bovier} for branching Brownian motion.
\end{abstract}

\bigskip
\noindent\textit{Keywords:} Branching random walk; Maximal displacement; Large deviation; Extremal process.
\section{Introduction}
We consider a branching random walk on the real line, which can be described as follows. At time 0, one particle is located at 0. At each time $n+1$, each particle alive at time $n$ dies and produces its children. The displacement of the children with respect to the position of their parent is distributed as a point process $\mathcal{L}$ on $\R$, and is independent of other branching events in the process.

For each particle $u$, we write $V(u)$ for its position and $|u|$ for its generation. Let $M_n$ be the maximal position occupied by a particle at time $n$. The asymptotic property of $M_n$ as $n$ go to infinity has been well studied. Hammersley\cite{hammersley}, Kingman\cite{kingman} and Biggins\cite{biggins} obtained, under increasingly general conditions,the first order speed of $M_n$ and the existence of an explicit constant $x^*\in\R$, depending on the reproduction law, satisfying
\[\frac{M_n}{n}\to x^*,\quad\text{as }n\to\infty,\quad \text{almost surely on the non-extinction set}.\]
The asymptotic fluctuations of $M_n-x^*n$ were studied by Hu and Shi\cite{hu}, Addario-Berry and Reed\cite{berry}, and Bramson and Zeitouni\cite{bramson}. Under mild conditions, they showed that there exists $\theta^*>0$ such that $M_n-x^*n$ fluctuates almost surely between $-\frac{3}{2\theta^*}\log n$ and $-\frac{1}{2\theta^*}\log n$, and $M_n-\mathrm{Med}(M_n)$ is tight, where $\mathrm{Med}(M_n)=x^*n-\frac{3}{2\theta^*}\log n+O(1)$. Finally, A\"{i}d\'ekon\cite{aidekon2} proved that $M_n-x^*n+\frac{3}{2\theta^*}\log n$ converges in law to a randomly shifted Gumbel variable. For more background about branching random walk, we refer to the lecture notes of Shi\cite{shi}.

In this article, we take interest in the upper deviation probability of $M_n$. Under the assumption that the displacements of the children of an individual are independent of one another and of their number, Gantert and H\"ofelsauer\cite{gantert} proved that for $x>x^*$, the upper large deviation probability $\bP(M_n\geq nx)$ decays exponentially if the displacement satisfies an exponential moment condition. We also mention that when the number of children is non-random, precise estimates on the right tail of $M_n$ have been obtained in \cite{buraczewski}. Without the independence between displacement and offspring, under some strong conditions, Rouault\cite{rouault} showed the precise estimation of the probability $\bP(\exists |u|=n,\ \text{s.t.}\ V(u)\in [nx-\delta,nx+\delta])$ for $x>x^*$ and $\delta>0$. For the lower deviation probability estimates of maximum, see \cite{hu1}, \cite{gantert}, \cite{chen} and \cite{ghosh}. For a branching random walk in which displacements have semi-exponential tails, Dyszewski et al.\cite{dyszewski} proved that the law of $M_n/n^\alpha$ satisfies a large deviation principle, for some $\alpha\in(0,1)$ depending on the tail distribution of the displacement. When the displacement has regularly varying tails, Bhattacharya\cite{bhattacharya} showed that there exists a sequence $(\gamma_n)_{n\geq 1}$ such that $M_n/\gamma_n$ satisfies a large deviation principle.

In the context of branching Brownian motion, which is a continuous-time analogue of the branching random walk, in which particles move according to i.i.d Brownian motion while giving birth to children at an exponential rate, the precise estimation of the upper deviation probability of the maximum were studied in \cite{chauvin}.

 Inspired by \cite{gantert},\cite{buraczewski} and \cite{berestycki}, in this article, we want to obtain an analogous result for the branching random walk, i.e. computing the asymptotic behavior of $\bP(M_n\geq nx)$ for $x>x^*$, under close to optimal conditions. Before showing the main results, we introduce some necessary notation.
Consider a branching random walk started from one particle at initial time. With previous notation, $(V(u):|u|=n)$ are the positions of particles alive at generation $n$.
 Define
 \begin{equation}\label{Zn}
 Z_n:=\sum_{|u|=n}\delta_{V(u)}
 \end{equation}
 the counting measure formed by the positions of the particles of the branching random walk at time $n$. For $A\subset\R$, we write $Z_n(A)$ for the number of particles located in the set $A$ at time $n$. We recall that $M_n=\max_{|u|=n}V(u)$ is the maximal displacement at time $n$, with the usual convention $M_n=-\infty$ if $\{|u|=n\}=\varnothing.$

We denote by
\[\psi(\theta):=\log \E\left(\sum_{|u|=1}e^{\theta V(u)}\right)\]
the log-Laplace transformation of the reproduction law of the branching random walk. We refer to the branching random walk as supercritical, critical or subcritical respectively when $\psi(0)>0$, $\psi(0)=0$ or $\psi(0)<0$, i.e. the mean number of the children is larger than, equal to or smaller than 1.
To avoid that the initial particle produces 0 particle almost surely, we always assume that
\[\bP(Z_1(\R)=0)<1\]
in this article.

Additionally, we assume that there exists some $\theta>0$ satisfying $\psi(\theta)<\infty$, and we define
\[\psi^*(x):=\sup_{\theta\geq 0}\{\theta x-\psi(\theta)\}\in\R\cup\{+\infty\}\]
 the Legendre transformation of $\psi$, which is non-decreasing and convex on $\R$. It is known from \cite{biggins} that in the supercritical case, almost surely on the non-extinction set,
\[\frac{M_n}{n}\to x^*:=\inf_{\theta>0}\frac{\psi(\theta)}{\theta}=\sup\{x\in\R:\psi^*(x)<0\},\quad\text{as }n\to\infty.\]

Let us state the upper deviation estimates of $M_n$, which is a slight refinement of \cite[Theorem 3.2]{gantert}.
\begin{proposition}\label{log}
Assume that there exists some $\theta>0$ satisfying $\psi(\theta)<\infty$. If $\psi(0)>0$, we have
\[\lim_{n\to\infty}\frac{1}{n}\log\bP(M_n\geq nx)=-\psi^*(x),\quad\text{for any }x>x^*.\]
If $\psi(0)\leq 0$, we have
\[\lim_{n\to\infty}\frac{1}{n}\log\bP(M_n\geq nx)=-\psi^*(x),\quad\text{for any }x>\frac{\E(\sum_{|u|=1}V(u))}{\E(Z_1(\R))}\in\R\cup\{-\infty\}.\]
\end{proposition}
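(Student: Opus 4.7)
The plan is to bound $n^{-1}\log\bP(M_n\geq nx)$ above and below by $-\psi^*(x)$: the upper bound via a Chernoff/Markov argument, and the lower bound via a Lyons--Biggins spine change of measure. The upper bound is essentially immediate and requires no additional structural assumption on the reproduction law beyond $\psi(\theta)<\infty$ for some $\theta>0$.

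For the upper bound, for any $\theta\geq 0$ with $\psi(\theta)<\infty$, Markov's inequality and the definition of $\psi$ yield
\[
\bP(M_n\geq nx)\leq\E\Bigl[\sum_{|u|=n}e^{\theta(V(u)-nx)}\Bigr]=e^{-n(\theta x-\psi(\theta))},
\]
and optimizing over admissible $\theta\geq 0$ gives $\bP(M_n\geq nx)\leq e^{-n\psi^*(x)}$, hence $\limsup_n n^{-1}\log\bP(M_n\geq nx)\leq -\psi^*(x)$.

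For the matching lower bound, the key observation is that in both the supercritical case $x>x^*$ and the critical/subcritical case $x>\psi'(0)$, convex duality applied to the smooth convex function $\psi$ gives, generically, an interior optimizer $\theta^*>0$ with $\psi'(\theta^*)=x$ and $\psi^*(x)=\theta^* x-\psi(\theta^*)$. Introduce the additive Biggins martingale $W_n(\theta^*)=\sum_{|u|=n}e^{\theta^* V(u)-n\psi(\theta^*)}$, with $\E W_n(\theta^*)=1$, and apply Lyons' spine construction to obtain a probability $\tilde\bP$ on the genealogical tree enriched with a distinguished infinite ray $(w_n)_{n\geq 0}$, satisfying $d\tilde\bP/d\bP|_{\mathcal F_n}=W_n(\theta^*)$ and $\tilde\bP(w_n=u\mid\mathcal F_n)=e^{\theta^* V(u)-n\psi(\theta^*)}/W_n(\theta^*)$. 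A direct calculation then gives that under $\tilde\bP$ the increments $V(w_k)-V(w_{k-1})$ are i.i.d.\ with log-Laplace transform $\lambda\mapsto\psi(\theta^*+\lambda)-\psi(\theta^*)$, hence mean $\psi'(\theta^*)=x$ and positive finite variance $\psi''(\theta^*)$. Since $\{V(w_n)\in[nx,nx+1]\}\subset\{M_n\geq nx\}$, inverting the change of measure yields
\[
\bP(M_n\geq nx)\geq e^{-\theta^*(nx+1)+n\psi(\theta^*)}\,\tilde\bP\bigl(V(w_n)\in[nx,nx+1]\bigr),
\]
and a local central limit theorem under $\tilde\bP$ produces $\tilde\bP(V(w_n)\in[nx,nx+1])\geq c/\sqrt n$. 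Combining these gives $\bP(M_n\geq nx)\geq cn^{-1/2}e^{-n\psi^*(x)}$ and hence $\liminf_n n^{-1}\log\bP(M_n\geq nx)\geq -\psi^*(x)$, which matches the upper bound.

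The principal obstacle I anticipate is the \emph{non-steep} regime, in which the supremum defining $\psi^*(x)$ is attained at the right endpoint $\theta_\infty=\sup\{\theta:\psi(\theta)<\infty\}$ with $\psi'(\theta_\infty^-)<x$; then no interior optimizer exists, the spine under the maximal tilt drifts at rate strictly below $x$, and the CLT argument no longer delivers a particle at level $nx$. I would handle this either by a single-big-jump construction -- tilt by $\theta_\infty$ to drive the spine at rate $\psi'(\theta_\infty^-)$ and then pay the residual height via Cramer's theorem applied to a single anomalously large increment, whose cost matches the linear extension of $\psi^*$ beyond the steep region -- or by an approximation argument using continuity of $\psi^*$ and monotonicity of $x\mapsto\bP(M_n\geq nx)$. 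A harmless secondary point in the critical/subcritical case is that the branching random walk dies out almost surely under $\bP$; however, the spine under $\tilde\bP$ is immortal by construction, so the change-of-measure argument proceeds unchanged.
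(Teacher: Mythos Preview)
Your upper bound is correct and matches the paper. The lower bound, however, has a genuine gap in the change-of-measure step. You claim
\[
\bP(M_n\geq nx)\geq e^{-\theta^*(nx+1)+n\psi(\theta^*)}\,\tilde\bP\bigl(V(w_n)\in[nx,nx+1]\bigr),
\]
but this does not follow from the spine construction. The Radon--Nikodym derivative is $d\bP/d\tilde\bP|_{\mathscr F_n}=1/W_n(\theta^*)$, and since $W_n(\theta^*)\geq e^{\theta^* V(w_n)-n\psi(\theta^*)}$ (the sum contains at least the spine's own term) one only obtains $1/W_n(\theta^*)\leq e^{-\theta^* V(w_n)+n\psi(\theta^*)}$, which is the wrong direction for a lower bound. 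Under $\tilde\bP$ the non-spine subtrees can make $W_n(\theta^*)$ much larger than the spine contribution, so there is no cheap lower bound on $1/W_n(\theta^*)$. What your computation actually delivers, via many-to-one, is a lower bound on the \emph{expected count} $\E[Z_n([nx,nx+1])]$, not on the probability $\bP(M_n\geq nx)$; bridging that gap is precisely the substantive content of the lower bound. (Your displayed inequality is the standard Cram\'er lower bound for a \emph{single} random walk, where the density is $e^{-\theta^* S_n+n\psi(\theta^*)}$; for the branching random walk the density is $1/W_n(\theta^*)$, which behaves very differently.)

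The paper closes this gap by a different mechanism, following Biggins. Fix $k\in\N$ and build an embedded Galton--Watson process whose individuals are the descendants at depth $k$ that have moved at least $kx$ relative to their ancestor. Its mean is $\E[Z_k([kx,\infty))]$, and Cram\'er's theorem applied to the intensity measure gives $k^{-1}\log\E[Z_k([kx,\infty))]\to -\psi^*(x)$. For $k$ large this embedded process is subcritical with mean in $(0,1)$, and a truncation argument (the paper's Lemma~4.1, which requires no $L\log L$ condition) shows its survival probability to generation $\lfloor n/k\rfloor$ decays at exponential rate $\log\E[Z_k([kx,\infty))]$. Since survival of the embedded process forces $M_n\geq nx$, the lower bound follows after sending $k\to\infty$. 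This route needs no interior optimiser, no finite-variance assumption, and no non-lattice hypothesis, so it also handles the non-steep regime you flagged without any additional work.
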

\begin{remark}
This proposition shows that $M_n$ satisfies a large deviation principle as soon as the intensity measure of the reproduction law of the branching random walk admits an exponential moment. In particular, no extra condition on the reproduction law is needed.
\end{remark}
In order to obtain the precise estimation of the probability $\bP(M_n\geq nx)$, we need some extra assumptions.
For the offspring law of the branching random walk, we assume that
\begin{equation}\label{asn}
\bP(Z_1(\R)\geq 2)>0,
\end{equation}
which implies that there is indeed some branching in the branching process we consider. We remark that if (\ref{asn}) does not hold, large deviation estimates of $M_n$ build down to large deviations for an usual random walk, obtained by Bahadur and Rao \cite{bahadur} and Petrov\cite{Petrov}, see forthcoming Remark \ref{randomwalk}.
We assume that there exists $\theta>0$ such that
\begin{equation}\label{as1}
\psi(\theta)<\infty \quad\text{and}\quad \theta\psi'(\theta)>\psi(\theta),
\end{equation}
where we denote by
\[\psi'(\theta):=\E\left(\sum_{|u|=1}V(u)e^{\theta V(u)-\psi(\theta)}\right), \]
which is assumed to be well-defined. Observe that if $\psi$ is finite in a neighbourhood of $\theta$, then by Lebesgue's dominated convergence theorem, $\psi'(\theta)$ corresponds to the derivative of $\psi$ at $\theta$, justifying the notation.
And we assume that
\begin{equation}\label{as2}
\quad\sigma^2:=\E\left(\sum_{|u|=1}(V(u)-\psi'(\theta))^2e^{\theta V(u)}\right)\in(0,\infty).
\end{equation}
Moreover, we add a mild $L\log L$-type integrable condition
\begin{equation}\label{as3}
\E\left(\sum_{|u|=1}e^{\theta V(u)}\log_+\sum_{|u|=1}e^{\theta V(u)}\right)<\infty,
\end{equation}
where $\log_+(x):=\log(\max\{x,1\}),$ for $x\geq 0$.
Finally, we assume that the branching random walk is non-lattice, i.e. for any $a,b\in\R$,
\begin{equation}\label{as4}
\bP(V(u)\in a+b\Z, \forall |u|=1 )<1,
\end{equation}
where $a+b\Z:=\{a+bn:n\in\Z\}$.

The next theorem gives the precise asymptotic behavior of $\bP(M_n\geq n\psi'(\theta)+y)$ as $n\to\infty$, uniformly in $y=O(\sqrt{n})$. It extends the results showed in \cite{chauvin} for branching Brownian motion to the branching random walk.
\begin{theorem}\label{th2}
Assume (\ref{asn}), (\ref{as1}), (\ref{as2}), (\ref{as3}) and (\ref{as4}). There exists an explicit constant $C(\theta)\in(0,1)$ defined in (\ref{constantC}) such that for any positive sequence $(a_n)_{n\geq0}$ satisfying $a_n=O(\sqrt{n})$, we have
\begin{equation}\label{uniform}
\lim_{n\to\infty}\sup_{|y|\leq a_n}\left|\sqrt{n}e^{\frac{y^2}{2\sigma^2 n}}e^{\theta y}e^{n(\theta \psi'(\theta)-\psi(\theta))}\bP(M_n\geq n\psi'(\theta)+y)- \frac{C(\theta)}{\sqrt{2\pi}\sigma\theta}\right|=0.
\end{equation}
 In particular, we have
\[\lim_{n\to\infty}\frac{\bP(M_n\geq n\psi'(\theta))}{\E(Z_n([n\psi'(\theta),\infty)))}= C(\theta).\]
\end{theorem}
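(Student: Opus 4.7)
The plan is to move from the branching random walk to an exponentially tilted spine measure, express the probability of interest as a single expectation involving the spine, and extract the precise asymptotics via a local limit theorem combined with a decoration analysis around the spine tip.

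First, introduce the additive martingale $W_n := \sum_{|u|=n} e^{\theta V(u) - n\psi(\theta)}$ and let $\mathbb{Q}$ be the associated spine measure: a distinguished ray $w = (w_0, w_1, \dots)$ is grown by using, at each generation, the size-biased offspring law with density proportional to $\sum e^{\theta V(\cdot)-\psi(\theta)}$, choosing the next spine child with probability proportional to $e^{\theta V(\cdot)}$, and declaring the off-spine subtrees to be independent copies of the original branching random walk. By (\ref{as1})--(\ref{as2}), under $\mathbb{Q}$ the spine positions $S_n := V(w_n)$ form a random walk with i.i.d.\ increments of mean $\psi'(\theta)$ and variance $\sigma^2$. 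Setting $m := n\psi'(\theta) + y$ and $N_n := Z_n([m,\infty))$, the elementary identity $\mathbf{1}_{\{M_n \geq m\}} = \sum_{|u|=n} \mathbf{1}_{\{V(u) \geq m\}}/N_n$ (with the convention $0/0 := 0$), combined with the spine many-to-one formula, yields
\[
\bP(M_n \geq m) = e^{n\psi(\theta)}\, \E^{\mathbb{Q}}\!\left[\frac{e^{-\theta S_n}\, \mathbf{1}_{\{S_n \geq m\}}}{N_n}\right],
\]
so that, after factoring out $e^{-\theta m}$, the normalized target in (\ref{uniform}) equals $\sqrt{n}\, e^{y^2/(2\sigma^2 n)}\, \E^{\mathbb{Q}}[e^{-\theta\Delta_n} \mathbf{1}_{\{\Delta_n \geq 0\}}/N_n]$ with $\Delta_n := S_n - m$.

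Next, (\ref{as4}) is inherited by the tilted increments, so Stone's local limit theorem gives $\bP^{\mathbb{Q}}(\Delta_n \in d\delta) = (1+o(1))(2\pi\sigma^2 n)^{-1/2} e^{-(y+\delta)^2/(2\sigma^2 n)}\, d\delta$, uniformly on $\delta$-compacts. The exponential weight $e^{-\theta\delta}$ localises the effective integration to $\delta = O(1)$, on which the Gaussian density differs from $(2\pi\sigma^2 n)^{-1/2} e^{-y^2/(2\sigma^2 n)}$ by a factor $\exp(-y\delta/(n\sigma^2) - \delta^2/(2n\sigma^2)) = 1+o(1)$, uniformly in $|y| \leq a_n = O(\sqrt{n})$. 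It therefore suffices to prove that $c(\delta) := \lim_{n\to\infty} \E^{\mathbb{Q}}[1/N_n \mid \Delta_n = \delta]$ exists for $\delta \geq 0$ and to identify $C(\theta) = \theta\int_0^\infty e^{-\theta\delta} c(\delta)\, d\delta$ as in (\ref{constantC}). This last identification comes from the spine decomposition: conditionally on the spine terminating at $m+\delta$, $N_n - 1$ counts the descendants in $[m,\infty)$ of the independent BRWs issued by siblings along the spine. A truncation at generation $n-K$ shows that siblings born before $n-K$ contribute $0$ with probability tending to $1$ (first as $n \to \infty$ then $K \to \infty$), since reaching level $m$ from a typical early sibling position is itself an atypical event (using $\psi'(\theta) > x^*$ from (\ref{as1})), while the last $K$ generations produce, in the limit, a decoration point measure $\mathcal{D}_\delta$ --- the family announced in the abstract --- so that $c(\delta) = \E[1/(1 + \mathcal{D}_\delta([0,\infty)))]$.

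The principal obstacle will be the uniform control of $\E^{\mathbb{Q}}[1/N_n \mid \Delta_n = \delta]$: while the trivial bound $1/N_n \leq 1$ gives dominated convergence for free, passing the limit inside the integral requires ruling out exceptional spine histories on which $N_n$ is anomalously large because of early branching in atypical directions. Here the $L\log L$ hypothesis (\ref{as3}) enters via uniform integrability of $W_n$ and standard Biggins-type tail estimates, which feed the early-generation truncation argument. Assumption (\ref{asn}) then forces $\bP(\mathcal{D}_\delta([0,\infty))\geq 1) > 0$, so $C(\theta) < 1$, while $c(\delta) > 0$ gives $C(\theta) > 0$. Finally, the ``in particular'' statement is free: performing the same change-of-measure and local limit theorem calculation on $\E(Z_n([n\psi'(\theta),\infty)))$ --- that is, without the factor $1/N_n$ --- produces the same prefactor with $C(\theta)$ replaced by $1$, so the ratio of the two asymptotics converges to $C(\theta)$.
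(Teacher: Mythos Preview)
Your outline is correct in spirit and shares the paper's architecture --- spine change of measure, Stone's local limit theorem, convergence of a decoration around the spine tip --- but the particular many-to-one identity you choose makes the analysis more entangled than necessary. You take $1_{\{M_n\ge m\}}=\sum_{|u|=n}1_{\{V(u)\ge m\}}/N_n$ with $N_n=Z_n([m,\infty))$; because $N_n$ depends on the threshold $m$, your integrand couples the decoration to the overshoot $\Delta_n$, and you are then forced to condition on the null event $\{\Delta_n=\delta\}$ and to invoke an implicit random-walk-bridge argument for the last $K$ spine increments in order to separate the two. The paper instead starts from $1_{\{Z_n(\R)>0\}}=\sum_{u:V(u)=M_n}1/\mathcal{E}_n(\{0\})$, which under the spine measure becomes $\tfrac{1}{\mathcal{E}_n^{\xi_n}(\{0\})}1_{\{\mathcal{E}_n^{\xi_n}((0,\infty))=0\}}$; this is a functional of the point measure seen from the spine tip and depends \emph{not at all} on $\Delta_n$. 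An explicit time-reversal of the spine increments (Lemma~\ref{lemD}) converts it into $h_n^\theta=\tfrac{1}{D_n^\theta(\{0\})}1_{\{D_n^\theta((0,\infty))=0\}}$ for an auxiliary process $D_n^\theta$ that increases monotonically to $D_\infty^\theta$, so the decoration factor and the random-walk factor $e^{-\theta(S_n-m)}1_{\{S_n\ge m\}}$ decouple cleanly, and the constant $C(\theta)=\E\bigl[\tfrac{1}{D_\infty^\theta(\{0\})}1_{\{D_\infty^\theta((0,\infty))=0\}}\bigr]$ of \eqref{constantC} drops out with no further identification --- whereas you would still need to match your integral representation $\theta\int_0^\infty e^{-\theta\delta}c(\delta)\,d\delta$ to \eqref{constantC}.

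One small correction: hypothesis \eqref{as3} is not used via uniform integrability of $W_n$; it enters as a first-moment bound on $\log_+\sum_k e^{\theta b_\ell(k)}$ under the size-biased law, which is exactly what makes the early-sibling truncation quantitative (Lemmas~\ref{well} and~\ref{part2}).
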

\begin{remark}\label{randomwalk}
If assumption (\ref{asn}) does not hold, the conclusions in Theorem \ref{th2} remain valid with $C(\theta)=1$. Indeed if $\bP(Z_1(\R)\leq 1)=1$, we have
\[\bP(M_n\geq n\psi'(\theta)+y)=(\E(Z_1(\R)))^n\bP(S_n\geq n\psi'(\theta)+y),\]
where $(S_n)_{n\geq0 }$ is a random walk started from $0$ and whose step distribution is such that for any non-negative measurable function $h$, $\E(h(S_1))=(\E(Z_1(\R)))^{-1}\E(\sum_{|u|=1}h(V(u)))$. Therefore by the large deviations principle of \cite[Theorem 2]{Petrov}, we obtain (\ref{uniform}) with $C(\theta)=1$. In other words, we have
\[C(\theta)<1\quad \text{if and only if}\quad \bP(Z_1(\R)\geq 2)>0.\]
\end{remark}
With similar strategies as in \cite{berestycki} and \cite{rouault}, we use the spine decomposition theorem, time reversal and the local limit theorem to prove Theorem \ref{th2}. As a by-product, we obtain the joint limit distribution of the extremal process $\mathcal{E}_n:=\sum_{|u|=n}\delta_{V(u)-M_n}$ and $M_n-n\psi'(\theta)$ conditionally on the event $\{M_n\geq n\psi'(\theta)\}$.

\begin{theorem}\label{th1}
Assume (\ref{as1}), (\ref{as2}), (\ref{as3}) and (\ref{as4}). There exists a point process $D(\theta)$ such that $(\mathcal{E}_n,M_n-n\psi'(\theta))$ conditionally on the event $\{M_n\geq n\psi'(\theta)\}$ converges in law to $(D(\theta),\mathbf{e})$, where $\mathbf{e}$ is an exponential random variable of parameter $\theta$ independent of $D(\theta)$.
\end{theorem}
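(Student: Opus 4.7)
The plan is to exploit the exponential tilt associated with the additive martingale $W_n^{(\theta)}:=\sum_{|u|=n}e^{\theta V(u)-n\psi(\theta)}$ together with the Lyons--Pemantle--Peres spine decomposition. Under the resulting size-biased probability $\bP^{(\theta)}$, there is a distinguished spine $(w_k)_{0\leq k\leq n}$ whose increments $X_k:=V(w_k)-V(w_{k-1})$ are i.i.d.\ with mean $\psi'(\theta)$ and variance $\sigma^2$, and each sibling of $w_k$ roots an independent unconditioned branching random walk. The basic identity on which everything rests is the many-to-one formula
\[\E\Big[\sum_{|u|=n}\mathbf{1}_{\{V(u)=M_n\geq n\psi'(\theta)+y\}}F\bigl({\textstyle\sum_{|v|=n}}\delta_{V(v)-V(u)}\bigr)\Big]=e^{n\psi(\theta)}\,\E^{(\theta)}\!\Big[e^{-\theta V(w_n)}\mathbf{1}_{\{V(w_n)=M_n\geq n\psi'(\theta)+y\}}F(\mathcal{E}_n^{(w_n)})\Big],\]
which reduces a joint statement about $M_n$ and $\mathcal{E}_n$ to a spine computation under $\bP^{(\theta)}$.

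The marginal convergence $M_n-n\psi'(\theta)\to\mathbf{e}\sim\mathrm{Exp}(\theta)$ under the conditional law is an immediate consequence of Theorem \ref{th2}: for $y\geq0$, $\bP(M_n\geq n\psi'(\theta)+y)/\bP(M_n\geq n\psi'(\theta))\to e^{-\theta y}$. This is the usual Bahadur--Rao--Petrov overshoot for the tilted spine walk $V(w_n)$, already built into the local central limit theorem driving Theorem \ref{th2}.

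To identify $D(\theta)$, I would decompose $\mathcal{E}_n^{(w_n)}$ according to the generation $k$ at which each particle branches off the spine. Writing $k':=n-k+1$ and $Y_j:=X_{n-j+1}$ for the reversed spine, and letting $T_v$ denote the independent branching random walk of height $k'-1$ rooted at a sibling $v$ of $w_k$ with birth offset $X_k^{\mathrm{sib}}$ from $V(w_{k-1})$, any particle $u\in T_v$ with $|u|=n$ satisfies
\[V(u)-V(w_n)=X_k^{\mathrm{sib}}-\sum_{j=1}^{k'}Y_j+V^{T_v}(u'),\qquad |u'|=k'-1.\]
Since the reversed spine sum $\sum_{j=1}^{k'}Y_j$ has positive drift $\psi'(\theta)$, an $O(1)$ contribution to the extremal process forces $V^{T_v}(u')\approx k'\psi'(\theta)+O(1)$, a large-deviation event for $T_v$ whose probability decays like $e^{-k'(\theta\psi'(\theta)-\psi(\theta))}/\sqrt{k'}$ by Theorem \ref{th2} applied to $T_v$. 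Assumption (\ref{as1}) gives the strict exponential decay, so the superposition over $k'\geq1$ defines a proper point process, and $D(\theta)$ is its law conditional on the spine's endpoint realising the overall maximum---an event whose asymptotic probability is the constant $C(\theta)$ of Theorem \ref{th2}.

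The main obstacle will be the uniform control of the tail in $k'$ and the proof of joint convergence. Asymptotic independence of $D(\theta)$ and $\mathbf{e}$ should arise because, after the exponential tilt and conditioning on $V(w_n)\geq n\psi'(\theta)+y$, the overshoot $V(w_n)-n\psi'(\theta)-y$ decouples in the limit from the shape of the spine near the tip: the former obeys the Cram\'er--Petrov exponential law, while the decoration depends only on finitely many recent spine increments and their attached subtrees of reversed height $O(1)$. The delicate step is the uniform truncation $k'\leq K$ for $y=O(1)$, which I expect to handle via second-moment estimates for the subtree maxima relying on the $L\log L$ condition (\ref{as3}), together with the local central limit theorem made available by the non-lattice assumption (\ref{as4}). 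The theorem then follows by sending $n\to\infty$ first and $K\to\infty$ afterwards in the spine decomposition above.
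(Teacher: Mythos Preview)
Your proposal is essentially the paper's own approach: tilt by the additive martingale, reverse time along the spine so that the extremal process becomes a superposition over the reversed depth $k'$, use the local limit theorem for the tilted walk to extract the exponential overshoot, and control the tail in $k'$ via the gap $\theta\psi'(\theta)-\psi(\theta)>0$ together with the $L\log L$ assumption. Two points to watch in implementation: your many-to-one identity sums over \emph{all} maximisers, so to recover a genuine conditional expectation you must divide by the number of particles at $M_n$, which is exactly the factor $1/D_\infty^\theta(\{0\})$ appearing in the paper's limit and in the constant $C(\theta)$; and the tail control in $k'$ goes through with first-moment (Markov/union) bounds rather than second-moment estimates---the $L\log L$ condition is used only to bound the number of spine siblings at each step via Borel--Cantelli, not to control any variance.
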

\begin{remark}
In particular, if assumption (\ref{asn}) does not hold, then $\mathcal{E}_{n}=\delta_0$ on $\{M_n>-\infty\}$, which implies that $D(\theta)=\delta_0$.
\end{remark}
In order to prove Theorem \ref{th1}, it is sufficient to demonstrate that for any non-negative continuous function $\phi$ with compact support and $x\geq 0$,
 \[\E(e^{-\langle\mathcal{E}_n,\phi\rangle}1_{\{M_n-n\psi'(\theta)\geq x\}}|M_n\geq n\psi'(\theta))\to e^{-\theta x}\E(e^{-\langle D(\theta),\phi\rangle})\]
 which is shown in Section \ref{th23}. The point process $D(\theta)$ obtained in Theorem \ref{th1} plays an analogous role to the decorations obtained in \cite[Propositon 7.5]{bovier} in the context of branching Brownian motion.

We also take interest in some properties of $D(\theta)$. Note that we have not assumed that the branching random walk is supercritical in Theorem $\ref{th1}$. If the branching random walk is subcritical (critical), we prove that the measure $D(\theta)$ is finite (infinite) respectively. More precisely, $\bP(D(\theta)(\R)<\infty)=1$ if and only if $\psi(0)<1$.

Besides, under some additional conditions, we show that $D(\theta)$ is continuous in distribution with respect to $\theta$, i.e. for $\theta_0>0$ such that $\psi'(\theta_0)>x^*$ and any non-negative continuous function $\phi$ with compact support, we have
\[\lim_{\theta\to\theta_0}\E(e^{-\langle D(\theta),\phi\rangle})=\E(e^{-\langle D(\theta_0),\phi\rangle}).\]

The rest of the paper is organized as follows. In Section \ref{pre}, we introduce Ulam-Harris-Neveu notation for trees and the many-to-one formula. Moreover, we introduce the spine decomposition, construct an auxiliary point process $D_n^\theta$ by time-reversal along the spine and show that the limit of $D_n^\theta$ exists. In Section \ref{app}, we study the asymptotic properties of $\E(e^{-\langle \mathcal{E}_n,\phi\rangle}1_{\{M_n\geq n\psi'(\theta)+y\}})$, and we see that $D(\theta)$ is actually the limit of $D_n^\theta$ biased by a random variable. In Section \ref{p1th23}, we prove Theorem \ref{th2}, Theorem \ref{th1} and Proposition \ref{log}. In Section \ref{finitenumber}, we discuss the number of atoms of $D(\theta)$ as the branching random walk is subcritical or critical, and we prove the continuity of $D(\theta)$ with respect to $\theta$ under additional integrability conditions.

In the proof of this paper, we denote by $C$ positive constant that may change from line to line.

\section{Spine decomposition from the tip of the branching random walk}\label{pre}
In this section, we introduce the many-to-one formula and spine decomposition theorem, and we construct an auxiliary and show the relationship between the extremal process and the auxiliary process. At the end, we give a random walk estimate by local limit theorem.
\subsection{Ulam-Harris-Neveu notation}\label{uhn}
We introduce the Ulam-Harris-Neveu notation for plane trees. Define the set of finite sequence of positive integer
\[\mathbb{T}:=\bigcup_{n\geq 0}\N^{n},\]
with the usual convention $\N^0:=\{\varnothing\}$. For $u\in \N^{n}$, we write $u=(u^{(1)},\ldots,u^{(n)})$ and
\begin{itemize}
\item we denote by $|u|$ the generation of $u\in\mathbb{T}$, i.e. if $u\in \N^{n}$, then $|u|:=n$.
\item for $k\leq |u|$, we use $u_k=(u^{(1)},\ldots,u^{(k)})$ to express the ancestor at generation $k$ of $u$. In particular, $u_0:=\varnothing$.
\item we denote by $\ddag\varnothing,u\ddag:=\{u_0,\ldots,u_{|u|}\}$ the set of vertices(including $\varnothing$ and $u$) in the unique shortest path connecting the root $\varnothing$ to $u$.
\end{itemize}

 Let $\{Z^{(u)}:=(z_k^{(u)}:k\geq 1):u\in\T\}$ be i.i.d random non-increasing sequences with $z_k^{(u)}\in\R\cup\{-\infty\}$ and be such that $\sum_{i\geq 1}\delta_{z_i^{(u)}}$ is distributed as the point process $\mathcal{L}$, with the convention that $\delta_{-\infty}:=0$.
 A branching random walk $Z:=\{V(u):u\in \T\}$ can be defined as follows: we set $V(\varnothing):=0$ and
 \[V(u):=\sum_{i=1}^{|u|}z_{u^{(i)}}^{(u_{i-1})},\quad\text{if }u\neq \varnothing,\]
with the convention that $-\infty+x=-\infty$ for any $x\in\R$. We treat particles $u$ such that $V(u)=-\infty$ as dead particles, which do not contribute to the branching random walk.

  We end this section with many-to-one formula, which is helpful for our proof.
 \begin{lemma}[Theorem 1.1 in\cite{shi}]
 Assume that $\theta>0$ is such that $\psi(\theta)<\infty$. For any $n\ge 1$ and any measurable function $g:\R^n\to[0,\infty)$, we have
 \[\E\left(\sum_{|u|=n}g(V(u_1),\ldots,V(u_n)\right)=\E(e^{-\theta S_n+n\psi(\theta)}g(S_1,\ldots,S_n )),\]
 where $(S_n)_{n\geq 0}$ is a random walk started from 0 with step distribution satisfying that for any nonnegative measurable function $h$,
 \[\E(h(S_1))=\E\left(\sum_{|u|=1}h(V(u))e^{\theta V(u)-\psi(\theta)}\right).\]
 \end{lemma}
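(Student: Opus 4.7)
The plan is to argue by induction on $n$. For the base case $n=1$, substituting $h(x) = e^{-\theta x + \psi(\theta)} g(x)$ into the defining identity for the step distribution of $(S_n)_{n \ge 0}$ immediately yields
$$\E\bigl(e^{-\theta S_1 + \psi(\theta)} g(S_1)\bigr) = \E\Bigl(\sum_{|u|=1} g(V(u))\Bigr),$$
which is the claim for $n=1$.

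For the inductive step, fix $n \geq 1$ and assume the formula at generation $n$. I would decompose each vertex $u$ of generation $n+1$ as $u = v u'$, where $v = u_1$ is its ancestor at generation $1$ and $u'$ runs over the generation-$n$ vertices of the subtree rooted at $v$. By the branching property, conditionally on the first generation the subtrees rooted at the particles of generation $1$ are i.i.d.\ copies of the original branching random walk shifted by $V(v)$, so (using Tonelli, since $g \geq 0$)
$$\E\Bigl(\sum_{|u|=n+1} g\bigl(V(u_1),\ldots,V(u_{n+1})\bigr)\Bigr) = \E\Bigl(\sum_{|v|=1} G(V(v))\Bigr),$$
where $G(x) := \E\bigl(\sum_{|u'|=n} g(x, x+V(u'_1),\ldots,x+V(u'_n))\bigr)$. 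Applying the inductive hypothesis to $g_x(y_1,\ldots,y_n) := g(x, x+y_1,\ldots,x+y_n)$ rewrites $G(x) = \E\bigl(e^{-\theta S_n + n\psi(\theta)} g(x,x+S_1,\ldots,x+S_n)\bigr)$, and then the $n=1$ case applied to the function $G$ itself gives
$$\E\Bigl(\sum_{|v|=1} G(V(v))\Bigr) = \E\bigl(e^{-\theta(S'_1+S_n)+(n+1)\psi(\theta)}\, g(S'_1,\, S'_1+S_1,\ldots,\, S'_1+S_n)\bigr)$$
for an independent copy $S'_1$ of $S_1$. Since the increments of $(S_n)$ are i.i.d.\ by construction, $(S'_1,\, S'_1+S_1,\ldots,\, S'_1+S_n)$ has the same joint law as $(S_1,\ldots,S_{n+1})$, closing the induction.

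The argument is essentially bookkeeping; no single step constitutes a genuine obstacle. The only care needed is notational—the consistent renaming of subtree coordinates at the induction step—together with the interchange of sums and expectations, which is legitimate by Tonelli thanks to $g \geq 0$. I note in passing that an equivalent and more conceptual route is to recognize the right-hand side as the expectation of $g(V(u_1),\ldots,V(u_n))$ along the spine under the tilted measure built from the additive martingale $W_n^\theta := \sum_{|u|=n} e^{\theta V(u) - n \psi(\theta)}$; since at this point of the paper the walk $(S_n)$ has been introduced directly, the direct induction above is the cleaner presentation.
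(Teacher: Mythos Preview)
Your induction argument is correct and is the standard route to the many-to-one formula. Note, however, that the paper does not supply its own proof of this lemma: it is quoted as Theorem~1.1 of \cite{shi} and used as a black box, so there is no in-paper argument to compare against. Your closing remark about reading the right-hand side as an expectation along the spine under the $W_n^\theta$-tilt is accurate and in fact anticipates the spine decomposition that the paper develops immediately afterwards in Section~\ref{sd}.
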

 In the above lemma, the sum over $|u|=n$ is taken over all live particles in the branching random walk alive at time $n$.

\subsection{Spine decomposition: a change of measure}\label{sd}
Lyons\cite{lyons} introduced the spine decomposition theorem for the branching random walk as an alternative description of the law of the branching random biased by its additive martingale. This method is an extension of the spine decomposition obtained by Lyons et al.\cite{lyons1} for Galton-Watson processes and Chauvin and Rouault\cite{chauvin} for branching Brownian motion.

Let $(\mathscr{F}_n)_{n\geq 0}$ be the natural filtration of the branching random walk $Z$, i.e.
 \[\mathscr{F}_n:=\sigma (V(u):|u|\leq n),\]
 with $\mathscr{F}_\infty:=\sigma(V(u):u\in\mathbb{T})$.
As $\psi(\theta)<\infty$, consider an additive martingale $W_n^\theta$, defined by
\[W_n^\theta:=\sum_{|u|=n}e^{\theta V(u)-n\psi(\theta)}.\]
According to Kolmogorov's extension theorem, there exists a probability $\bP_Q$ on $\mathscr{F}_\infty$ satisfying for any $n\geq 0$, $A\in \mathscr{F}_n$,
\[\bP_Q(A)=\E(W_n^\theta1_{A}).\]

The spine decomposition consists in an alternative description of the law $\bP_Q$ constructed as a branching random walk with a spine.
Let a random pair $(\hat{\mathcal{L}},\xi)$ be such that for any non-negative measurable function $f$,
\[\E(f(\hat{\mathcal{L}},\xi))=\E\left(\sum_{k\geq 1}e^{\theta V(k)-\psi(\theta)}f(Z_1,k)\right).\]
 A branching random walk $(V(u):u\in\mathbb{T})$ with a spine $(\xi_n:n\geq 0)$ can be described as follows. At time 0, one particle $\xi_0:=\varnothing$ locates at position $V(\xi_0)=0$. At each time $n\geq1$, all particles die, while giving birth independently to sets of new particles. The displacements (with respect to their parent) of children of normal particle $z$ are distributed as $(V(i):i\geq 1)$. The displacements (with respect to their parent) of children of spine particle $\xi_{n-1}$ are distributed as $\hat{\mathcal{L}}$; the particle $\xi_n$ is chosen among the children $y$ of $\xi_{n-1}$ with probability proportional to $e^{\theta V(y)}$. Let us denote by $\tilde{\bP}$ be the law of the branching random walk with a spine. The spine decomposition theorem corresponds in the identification of the law of the size-biased branching random walk and the law of the branching random walk with a spine.
 \begin{theorem}[Lyons\cite{lyons}]
 For any $n\geq 0$, the law of $(V(u):|u|\leq n)$ is identical under $\bP_Q$ and $\tilde{\bP}$. Moreover, for any $|u|=n$,
\begin{equation}\label{spine}
\tilde{\bP}(u=\xi_n|\mathscr{F}_n)=\frac{e^{\theta V(u)-n\psi(\theta)}}{W_n^\theta},\quad \text{a.s.}.
\end{equation}
 \end{theorem}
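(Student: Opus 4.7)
The plan is to derive both assertions from a single joint identity: for every $n\ge 0$, every $u\in\mathbb{T}$ with $|u|=n$, and every bounded $\mathscr{F}_n$-measurable $F$,
\begin{equation}\label{jointid}
\tilde{\E}\bigl[F\cdot 1_{\{\xi_n=u\}}\bigr]=\E\bigl[F\cdot e^{\theta V(u)-n\psi(\theta)}\bigr].
\end{equation}
Summing (\ref{jointid}) over $|u|=n$ gives $\tilde{\bP}(A)=\E[W_n^\theta\,1_A]$ for any $A\in\mathscr{F}_n$, which is the first assertion of the theorem. Dividing (\ref{jointid}) by its summed version (noting that the spine is alive $\tilde{\bP}$-a.s., so $W_n^\theta>0$ $\tilde{\bP}$-a.s.) identifies $e^{\theta V(u)-n\psi(\theta)}/W_n^\theta$ as a version of $\tilde{\bP}(\xi_n=u\mid\mathscr{F}_n)$, which is (\ref{spine}).

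I would then establish (\ref{jointid}) by induction on $n$. The case $n=0$ is immediate since $\xi_0=\varnothing$ and $V(\varnothing)=0$. For the inductive step, write $u=u_{n-1}\cdot i$; the event $\{\xi_n=u\}$ forces $\xi_{n-1}=u_{n-1}$, because $\xi_n$ must be a child of $\xi_{n-1}$ under the spine construction. Conditionally on $\mathscr{F}_{n-1}$ and on the identity of $\xi_{n-1}$, the definition of $\tilde{\bP}$ prescribes that non-spine particles reproduce according to $\mathcal{L}$ (exactly as under $\bP$), while the children of $\xi_{n-1}$ are drawn from $\hat{\mathcal{L}}$ and then $\xi_n$ is chosen among them proportionally to $e^{\theta V(\cdot)}$. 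Composing the density of $\hat{\mathcal{L}}$ versus $\mathcal{L}$, namely $\sum_k e^{\theta z_k-\psi(\theta)}$, with the selection weight $e^{\theta z_i}/\sum_k e^{\theta z_k}$ collapses to the single factor $e^{\theta z_i-\psi(\theta)}$ applied to an $\mathcal{L}$-distributed family of offspring. This is exactly the multiplicative factor needed to pass from $e^{\theta V(u_{n-1})-(n-1)\psi(\theta)}$ (furnished by the induction hypothesis applied at $u_{n-1}$) to $e^{\theta V(u)-n\psi(\theta)}$.

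The main obstacle is this bookkeeping at the inductive step: one must disentangle the two sources of randomness at generation $n$ under $\tilde{\bP}$, namely the biased reproduction of the spine and the subsequent size-biased choice of its successor, verify that their composition is equivalent, jointly on offspring configuration and spine index, to $\mathcal{L}$ weighted by $e^{\theta z_i-\psi(\theta)}$, and then couple this with the independent unbiased evolution of every other subtree. The identity $\E[f(\hat{\mathcal{L}},\xi)]=\E[\sum_k e^{\theta V(k)-\psi(\theta)}f(Z_1,k)]$ stated just above the theorem is tailored to execute this collapse in one line. The remaining steps, splitting $F$ into a factor depending on the displacements along the spine ancestry and one depending on the rest of the tree, and applying Fubini to combine the biased and unbiased evolutions, are routine and raise no conceptual difficulty.
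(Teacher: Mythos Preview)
The paper does not prove this theorem; it is quoted from Lyons\cite{lyons} and stated without argument. Your proposal is correct and is essentially Lyons's own proof: the joint identity \eqref{jointid} proved by induction, with the inductive step driven by the collapse of the $\hat{\mathcal{L}}$-density $\sum_k e^{\theta z_k-\psi(\theta)}$ against the selection weight $e^{\theta z_i}/\sum_k e^{\theta z_k}$ into the single factor $e^{\theta z_i-\psi(\theta)}$, is exactly the mechanism in \cite{lyons} (and in the exposition of \cite[Theorem~1.4]{shi}). There is nothing to compare against in the present paper, and no gap in your argument.
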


Next, we introduce some notation that is helpful for constructing an auxiliary point process $D_n^\theta$. Suppose that $\{((b_k(i):i\geq 1),w^{(k)})\}_{k\geq1}$ are i.i.d. copies of $((\hat{\ell}_i:i\geq 1),\xi)$, where $(\hat{\ell}_i)_{i\geq 1}$ is the ranked sequence of atoms in $\hat{\mathcal{L}}$, which is non-increasing sequence converging to $-\infty$. Let $\{(V^{(i,k)}(u):u\in\T):k\geq 1,i\geq1\}$ be i.i.d copies of the branching random walk $(V(u):u\in \T)$, independent of $\{((b_k(i):i\geq 1),w^{(k)})\}_{k\geq1}$. For $n\geq 0$, we define an auxiliary point process
\begin{equation}\label{d1}
D^\theta_n:=\delta_0+\sum_{k=1}^{n}\sum_{i\neq w^{(k)}}\sum_{|u|=k-1}\delta_{b_k(i)+V^{(i,k)}(u)-S_{k}},
\end{equation}
with the convention $D^\theta_0:=\delta_0$, where
\[S_n:=\sum_{k=1}^n b_k(w^{(k)}).\]
 Recall that $\mathcal{E}_n=\sum_{|u|=n}\delta_{V(u)-M_n}$. By decomposing the branching random walk with spine along the path $\ddag \varnothing,\xi_n\ddag$ and reversing the spine and the displacement (with respect to their parent) of the children of the spine particles before $n$th generation, we show the relationship between $(\mathcal{E}_n,M_n-n\psi'(\theta))$ and $(D_n^\theta,S_n-n\psi'(\theta))$.
\begin{lemma}\label{lemD}
Let $\theta>0$ such that $\psi(\theta)<\infty$ and $\psi'(\theta)<\infty$. For any non-negative measurable functions $F$, $f$ and $n\geq 1$, we have
\begin{equation}\label{eqD}
\begin{split}
&\E(F(\mathcal{E}_n)f(M_n-n\psi'(\theta))1_{\{Z_n(\R)>0\}})\\
=&e^{n(\psi(\theta)-\theta\psi'(\theta))}\E\left(e^{-\theta (S_n-n\psi'(\theta))}F(D_n^\theta)f(S_n-n\psi'(\theta))\frac{1}{D_n^\theta(\{0\})}1_{\{D_n^\theta((0,\infty))=0\}}\right).
\end{split}
\end{equation}
\end{lemma}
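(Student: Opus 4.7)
My plan is to prove \eqref{eqD} in three steps: rewrite the LHS as a sum over candidate maximizing particles, apply the spine change of measure, and then identify the resulting quantity with $(D_n^\theta, S_n)$ via time-reversal along the spine.

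For the first step, on $\{Z_n(\R)>0\}$ I introduce the shifted point measures $\mathcal{E}_n^u := \sum_{|v|=n}\delta_{V(v) - V(u)}$ for $|u| = n$. The observation $\{V(u) = M_n\} = \{\mathcal{E}_n^u((0, \infty)) = 0\}$, together with $\mathcal{E}_n^u = \mathcal{E}_n$ and $\mathcal{E}_n^u(\{0\}) = \#\{|v|=n : V(v) = M_n\}$ on this event, lets me rewrite
\[
F(\mathcal{E}_n) f(M_n - n\psi'(\theta))\, 1_{\{Z_n(\R) > 0\}} = \sum_{|u|=n} F(\mathcal{E}_n^u) f(V(u) - n\psi'(\theta)) \frac{1_{\{\mathcal{E}_n^u((0,\infty)) = 0\}}}{\mathcal{E}_n^u(\{0\})}.
\]

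For the second step, I use Lyons' change of measure. Since $\bP_Q|_{\mathscr{F}_n} = W_n^\theta \cdot \bP|_{\mathscr{F}_n}$ and $\tilde{\bP}(\xi_n = u \mid \mathscr{F}_n) = e^{\theta V(u) - n\psi(\theta)}/W_n^\theta$ by \eqref{spine}, a short computation gives, for any $\mathscr{F}_n$-measurable functional $G(u)$,
\[
\E\Big(\sum_{|u|=n} G(u)\Big) = \tilde{\E}\big(G(\xi_n)\, e^{-\theta V(\xi_n) + n\psi(\theta)}\big).
\]
Applied to the integrand from step one, this expresses the LHS of \eqref{eqD} as a $\tilde{\bP}$-expectation involving $F(\mathcal{E}_n^{\xi_n})$, $V(\xi_n)$, and the factor $e^{-\theta V(\xi_n)+n\psi(\theta)}$.

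For the third step, I decompose the branching random walk with spine under $\tilde{\bP}$ along the path $\ddag \varnothing, \xi_n \ddag$. With the notation of \eqref{d1}, $V(\xi_n) = S_n$ and
\[
\mathcal{E}_n^{\xi_n} = \delta_0 + \sum_{k=1}^n \sum_{i \neq w^{(k)}} \sum_{|\tilde u| = n-k} \delta_{V(\xi_{k-1}) + b_k(i) + V^{(i,k)}(\tilde u) - V(\xi_n)}.
\]
Reindexing $k \mapsto n - k + 1$, noting $V(\xi_n) - V(\xi_{k-1}) = \sum_{j=k}^n b_j(w^{(j)})$, and invoking that the pairs $\{((b_k(i))_{i\geq 1}, w^{(k)})\}_{k=1}^n$ and subtree BRWs $(V^{(i,k)})$ are i.i.d. in $k$ (so their joint law is invariant under time reversal), I identify $(\mathcal{E}_n^{\xi_n}, V(\xi_n)) \stackrel{d}{=} (D_n^\theta, S_n)$ under $\tilde{\bP}$. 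Substituting this identification into the formula from step two and pulling out the deterministic factor $e^{n(\psi(\theta)-\theta\psi'(\theta))}$ yields \eqref{eqD}. The main subtlety lies in this time-reversal bookkeeping: one must confirm that reversing the spine indices preserves the joint distribution of the decorations (true by the i.i.d. assumption) and that, after relabeling, the shifted atoms take precisely the form $b_k(i) + V^{(i,k)}(u) - S_k$ of \eqref{d1}.
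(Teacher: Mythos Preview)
Your proposal is correct and follows essentially the same three-step route as the paper: rewrite the LHS as a sum over particles achieving the maximum, apply the spine change of measure to pass to $\tilde{\bP}$, and then identify $(\mathcal{E}_n^{\xi_n},V(\xi_n))$ with $(D_n^\theta,S_n)$ by reversing the i.i.d.\ decorations along the spine. The only presentational difference is that the paper carries out the time-reversal step via an explicit bijection $u\mapsto V^{(n)}(u)$ on $\N^n$ and checks the resulting distributional identity coordinatewise, whereas you argue directly from exchangeability of the blocks $\{((b_k(i))_{i\geq1},w^{(k)},(V^{(i,k)})_{i\geq1})\}_{k=1}^n$; the content is the same.
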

\begin{proof}
We have
\[\E\left(\sum_{|u|=n}e^{\theta V(u)}1_{\{M_n=V(u)\}}\right)\leq \E\left(\sum_{|u|=n}e^{\theta V(u)}\right)= e^{n\psi(\theta)}<\infty,\]
which implies that $\bP(\mathcal{E}_n(\{0\})<\infty)=1.$
By definition, on $\{Z_n(\R)>0\}$, we have $\mathcal{E}_n(\{0\})\geq 1$.
For all non-negative measurable functions $F$ and $f$, we have
\[
\begin{split}
&\E[F(\mathcal{E}_n)f(M_n-\psi'(\theta)n)1_{\{Z_n(\R)>0\}}]\\
=&\E\left[\frac{1}{\mathcal{E}_n(\{0\})}\sum_{u\in N_n}F(\mathcal{E}_n^u)f(M_n-\psi'(\theta)n)1_{\{Z_n(\R)>0\}}\right]\\
=&\E_Q\left[\frac{1}{W_n^\theta}\frac{1}{\mathcal{E}_n(\{0\})}\sum_{u\in N_n}F(\mathcal{E}_n^u)f(V(u)-\psi'(\theta)n)\right],
\end{split}
\]
where $N_n:=\{|u|=n: V(u)=M_n\}$, and $\mathcal{E}_n^u:=\sum_{|v|=n}\delta_{V(v)-V(u)}$ is the branching random walk seen form the particle $u$.
Using the spine decomposition theorem, we have
\[
\begin{split}
&\E_Q\left[\frac{1}{W_n^\theta}\frac{1}{\mathcal{E}_n(\{0\})}\sum_{u\in N_n}F(\mathcal{E}_n^u)f(V(u)-\psi'(\theta)n)\right]\\
=&\tilde{\E}\left[\frac{1}{\mathcal{E}_n(\{0\})}\sum_{u\in N_n}F(\mathcal{E}_n^u)e^{-\theta V(u)+n\psi(\theta)}1_{\{\xi_n=u\}}f(V(u)-\psi'(\theta)n)\right]\\
=&\tilde{\E}\left[\frac{1}{\mathcal{E}_n^{\xi_n}(\{0\})}e^{-\theta V(\xi_n) +n\psi(\theta)}F(\mathcal{E}_n^{\xi_n})f(V(\xi_n)-\psi'(\theta)n)1_{\{\xi_n\in N_n\}}\right]\\
=&\tilde{\E}\left[\frac{1}{\mathcal{E}_n^{\xi_n}(\{0\})}e^{-\theta V(\xi_n) +n\psi(\theta)}F(\mathcal{E}_n^{\xi_n})f(V(\xi_n)-\psi'(\theta)n)1_{\{\mathcal{E}_n^{\xi_n}((0,\infty))=0\}}\right].
\end{split}
\]
Next, we construct a mapping from $\N^n$ to the atoms of $D_n^\theta$. Let $w^{(n)}_k:=(w^{(n)},\ldots,w^{(n-k+1)}),1\leq k\leq n$.
For $u\in \N^n$, define $r(u):=\sup\{k\geq 0:u_k=w^{(n)}_{k}\}$ and
\[V^{(n)}(u):=S_n-S_{n-r(u)}+b_{n-r(u)}(u^{(r(u)+1)})+V^{(u^{(r(u)+1)},n-r(u))}(u|_{n-r(u)-1}),\]
with the convention $V^{(n)}(w_n^{(n)})=S_n$, where $u|_k$ is defined as the last $k$ items of $u$, i.e.
\[u|_k:=(u^{(|u|-k+1)},\ldots,u^{(|u|)}).\]
Then for any $|u|=n$, we have
\begin{enumerate}[1)]
\item If $u=w^{(n)}_n$, $V^{(n)}(u)-V^{(n)}(w^{(n)}_n)=0$;
\item If $u\neq w^{(n)}_n$, $V^{(n)}(u)-V^{(n)}(w^{(n)}_n)=b_{n-r(u)}(u^{(r(u)+1)})+V^{(u^{(r(u)+1)},n-r(u))}(u|_{n-r(u)-1})-\sum_{i=1}^{n-r(u)}b_i(w^{(i)}),$
\end{enumerate}
which implies that
\[D_n^\theta=\sum_{|u|=n}\delta_{V^{(n)}(u)-V^{(n)}(w^{(n)}_n)}.\]
 Observe that $(((b_1(i):i\geq 1),w^{(1)}),\ldots,((b_n(i):i\geq 1),w^{(n)}))$ is distributed as $(((b_n(i):i\geq 1),w^{(n)}),\ldots,((b_1(i):i\geq 1),w^{(1)}))$. Therefore, we deduce that $((V^{(n)}(u):|u|=n),w_n^{(n)})$ is distributed as $((\bar{V}^{(n)}(u):|u|=n),\bar{w}_n)$, where
 \[
 \begin{split}
 \bar{V}^{(n)}(u):&=\sum_{i=n-\bar{r}(u)+1}^{n}b_{n-i+1}(w^{(n-i+1)})+b_{\bar{r}(u)+1}(u^{(\bar{r}(u)+1)})+V^{(u^{(\bar{r}(u)+1)},n-\bar{r}(u))}(u|_{n-\bar{r}(u)-1})\\
 &=S_{\bar{r}(u)}+b_{\bar{r}(u)+1}(u^{(\bar{r}(u)+1)})+V^{(u^{(\bar{r}(u)+1)},n-\bar{r}(u))}(u|_{n-\bar{r}(u)-1}),
 \end{split}
 \]
 $\bar{w}_n:=(w^{(1)},\ldots,w^{(n)})$ and $\bar{r}(u):=r(u,\bar{w}_n)$.
 By its definition, we observe that $((\bar{V}^{(n)}(u):|u|=n),\bar{V}^{(n)}(\bar{w}_n))$ under $\bP$ is distributed as $((V(u):|u|=n),V(\xi_n))$ under $\tilde{\bP}$.
 Thus
\[
\begin{split}
&\tilde{\E}\left(\frac{1}{\mathcal{E}_n^{\xi_n}(\{0\})}e^{-\theta V(\xi_n) +n\psi(\theta)}F(\mathcal{E}_n^{\xi_n})f(V(\xi_n)-\psi'(\theta)n)1_{\{\mathcal{E}_n^{\xi_n}((0,\infty))=0\}}\right)\\
=&\E\left(e^{-\theta S_n+n\psi(\theta)}F(D_n^\theta)f(S_n-n\psi'(\theta))\frac{1}{D_n^\theta(\{0\})}1_{\{D_n^\theta((0,\infty))=0\}}\right)\\
=&e^{n(\psi(\theta)-\theta\psi'(\theta))}\E\left(e^{-\theta S_n+n\theta\psi'(\theta)}F(D_n^\theta)f(S_n-n\psi'(\theta))\frac{1}{D_n^\theta(\{0\})}1_{\{D_n^\theta((0,\infty))=0\}}\right),
\end{split}
\]
which completes the proof.
\end{proof}

Define the increasing limit of $D^\theta_n$ as $n\to\infty$ by
\[D_\infty^\theta:=\delta_0+\sum_{k=1}^{\infty}\sum_{i\neq w^{(k)}}\sum_{|u|=k-1}\delta_{b_k(i)+V^{(i,k)}(u)-S_{k}}.\]
Using the lemma below, we will prove that $D_\infty^\theta$ is a well-defined point process.
\begin{lemma}\label{well}
 Let $\theta>0$ such that (\ref{as1}) and (\ref{as3}) hold. For $\vep \in(0,\frac{1}{2}(\psi'(\theta)-\frac{\psi(\theta)}{\theta}))$, we define
\[A_{n,\vep}:=\left\{\max_{\ell\geq n}\frac{\max_{k\geq 1}M^{(k,\ell)}-S_{\ell}}{\ell}<-\vep\right\},\]
where $M^{(k,\ell)}:=b_\ell(k)+\max_{|u|=\ell-1}\{V^{(k,\ell)}(u)\}$.
Then as $n\to\infty$, $\bP(A_{n,\vep}^c)\to0.$
\end{lemma}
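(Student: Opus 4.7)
The plan is to apply the first Borel--Cantelli lemma conditionally on the spine $\sigma$-algebra $\mathscr{G}:=\sigma\bigl((b_j,w^{(j)})_{j\geq 1}\bigr)$. Writing $B_\ell:=\{\max_k M^{(k,\ell)}-S_\ell\geq -\vep\ell\}$, one has $A_{n,\vep}^c=\bigcup_{\ell\geq n}B_\ell\downarrow\limsup_\ell B_\ell$, so it suffices to show that $\sum_\ell\bP(B_\ell\mid\mathscr{G})<\infty$ almost surely, which will give $\bP(B_\ell\text{ i.o.})=0$ and hence $\bP(A_{n,\vep}^c)\to 0$ by continuity of $\bP$.

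The spine increments $b_j(w^{(j)})$ are i.i.d.\ with moment generating function $\E[e^{tb_j(w^{(j)})}]=e^{\psi(\theta+t)-\psi(\theta)}$, which is finite for $t\leq 0$ and has mean $\psi'(\theta)$. By the strong law of large numbers, $S_\ell/\ell\to\psi'(\theta)$ almost surely, and I pick $\delta>0$ small enough that $A:=\theta(\psi'(\theta)-\delta-\vep)-\psi(\theta)>0$, which is possible since $\vep<\tfrac{1}{2}(\psi'(\theta)-\psi(\theta)/\theta)$. Exponential Markov with parameter $\theta$ applied to each independent BRW $V^{(k,\ell)}$ together with the many-to-one identity $\E[\sum_{|u|=\ell-1}e^{\theta V(u)}]=e^{(\ell-1)\psi(\theta)}$ gives, conditionally on $\mathscr{G}$,
\[
\bP(B_\ell\mid\mathscr{G})\;\leq\;e^{-\theta(S_\ell-\vep\ell)+\ell\psi(\theta)}\,\widetilde W_\ell,\qquad \widetilde W_\ell:=e^{-\psi(\theta)}\sum_{k\geq 1}e^{\theta b_\ell(k)},
\]
and on the a.s.\ event $\{S_\ell\geq(\psi'(\theta)-\delta)\ell\text{ for all }\ell\text{ large}\}$ this becomes $\bP(B_\ell\mid\mathscr{G})\leq 1\wedge(e^{-A\ell}\widetilde W_\ell)$ eventually.

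Finally, $(\widetilde W_\ell)_{\ell\geq 1}$ is an i.i.d.\ sequence with the distribution of the size-biased $W_1^\theta$, so by assumption (\ref{as3}) one has $\E[\log_+\widetilde W_\ell]=\E[W_1^\theta\log_+W_1^\theta]<\infty$. Using the identity $\E[\log_+X]=\int_1^\infty\bP(X>u)\,du/u$, this implies $\sum_\ell\bP(\widetilde W_\ell>e^{A\ell/2})<\infty$, so by Borel--Cantelli $\widetilde W_\ell\leq e^{A\ell/2}$ eventually almost surely; consequently $\sum_\ell e^{-A\ell}\widetilde W_\ell<\infty$ almost surely, furnishing the required a.s.\ summability of $\bP(B_\ell\mid\mathscr{G})$ and completing the proof. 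The main subtlety is that $\E[\widetilde W_\ell]=\E[(W_1^\theta)^2]$ need not be finite under the $L\log L$ hypothesis (\ref{as3}), so one cannot estimate $\sum_\ell\bP(B_\ell)$ directly via linearity of expectation; conditioning on $\mathscr{G}$ together with the a.s.\ sub-exponential growth of $\widetilde W_\ell$ (which requires only the $\log$-moment) is precisely what replaces the missing second moment of $W_1^\theta$.
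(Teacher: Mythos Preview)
Your proof is correct and follows essentially the same route as the paper's: condition on the spine $\sigma$-algebra, bound $\bP(B_\ell\mid\mathscr{G})$ via exponential Markov and the many-to-one formula to obtain $e^{(\ell-1)\psi(\theta)+\theta\vep\ell-\theta S_\ell}\sum_k e^{\theta b_\ell(k)}$, then invoke the SLLN for $S_\ell/\ell$ and Borel--Cantelli (driven by the $L\log L$ moment (\ref{as3})) to control $\sum_k e^{\theta b_\ell(k)}$ sub-exponentially. The only cosmetic difference is that the paper bounds $\bP(A_{n,\vep}^c\mid\mathcal{Y})$ directly by the tail of the resulting convergent series and then applies dominated convergence, whereas you phrase it as a.s.\ summability of $\bP(B_\ell\mid\mathscr{G})$ plus conditional Borel--Cantelli; these are equivalent. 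Your closing remark on why the unconditional first-moment bound fails (it would need $\E[(W_1^\theta)^2]<\infty$) is a useful observation not made explicit in the paper.
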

\begin{proof}
We denote by $\mathcal{Y}$ the sigma-field generated by $(b_\ell(k),w^{(\ell)},\ell\geq 1,k\geq 1)$. By the independence of $\mathcal{Y}$ and $(V^{(i,k)}(u):u\in\mathbb{T},i\geq 1,k\geq 0)$, for $\ell\geq n$, we have that almost surely
\[
\begin{split}
\bP(A_{n,\vep}^c|\mathcal{Y})
\leq &\sum_{\ell\geq n}\E(1_{\{\max_{k\geq 1}M^{(k,\ell)}-S_{\ell}\geq -\vep\ell\}}|\mathcal{Y})\\
\leq &\sum_{\ell\geq n}\sum_{k=1}^\infty e^{\theta b_\ell(k)-\theta S_\ell+\theta\vep\ell}\E(e^{\theta\max_{|u|=\ell-1}V^{(k,\ell)}(u)}).
\end{split}
\]
Using that $\E(e^{\theta\max_{|u|=\ell-1}V^{(k,\ell)}(u)})\leq \E(\sum_{|u|=\ell-1}e^{\theta V(u)})$, we have
\begin{equation}\label{conexp}
\bP(A_{n,\vep}^c|\mathcal{Y})\leq \sum_{\ell\geq n}e^{(\ell-1)\psi(\theta)+\vep\theta\ell}e^{-\theta S_\ell}\sum_{k=1}^\infty e^{\theta b_\ell(k)}.
\end{equation}
Observe that
\[\bP\left(\sum_{k=1}^\infty e^{\theta b_\ell(k)}\geq e^{\vep\theta\ell}\right)=\bP\left(\log_+\left(\sum_{k=1}^\infty e^{\theta b_\ell(k)}\right)\geq \vep\theta\ell\right).\]
By assumption (\ref{as3}) and the definition of $b_\ell(k)$, we have
\[\E\left(\log_+\left(\sum_{k=1}^\infty e^{\theta b_\ell(k)}\right)\right)=\E\left(\sum_{|u|=1}e^{\theta V(u)}\log_+\left(\sum_{|u|=1}e^{\theta V(u)}\right)\right)<\infty,\]
which implies that
\[\sum_{\ell\geq 1}\bP\left(\sum_{k=1}^\infty e^{\theta b_\ell(k)}\geq e^{\vep\theta\ell}\right)=\sum_{\ell\geq 1}\bP\left(\log_+\left(\sum_{k=1}^\infty e^{\theta b_\ell(k)}\right)\geq \vep\theta\ell\right)<\infty.\]
By Borel-Cantelli Lemma, we have
\begin{equation}\label{cb}
\sum_{k=1}^\infty e^{\theta b_\ell(k)}<e^{\vep\theta\ell}
\end{equation}
almost surely for $\ell$ large enough.
On the other hand, by the strong law of large numbers, we have
\[\lim_{n\to\infty}\frac{S_n}{n}=\E(S_1)=\psi'(\theta),\quad\text{a.s.},\]
which, by assumption that $\theta\psi'(\theta)>\psi(\theta)$, implies that almost surely,
\begin{equation}\label{slln}
\lim_{n\to\infty}\frac{\psi(\theta)n-\theta S_n+2\vep n}{n}= \psi(\theta)-\theta\psi'(\theta)+2\vep<0.
\end{equation}
Combine (\ref{conexp}) with (\ref{cb}) and (\ref{slln}), we deduce that almost surely,
\[\lim_{n\to\infty}\bP(A_{n,\vep}^c|\mathcal{Y})=0,\]
which, by dominated convergence theorem, yields that $\lim_{n\to\infty}\bP(A_{n,\vep}^c)=0.$
\end{proof}
Now, we prove that $D_\infty^\theta$ is a well-defined Radon measure.
\begin{lemma}\label{well plus}
Let $\theta>0$ such that (\ref{as1}) and (\ref{as3}) hold. For any $y\in\R$,
$D_\infty^\theta((y,\infty))<\infty$ almost surely, and $D_n^\theta$ converges to $D_\infty^\theta$ almost surely for the topology of the vague convergence.
\end{lemma}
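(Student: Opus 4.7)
The plan is to use Lemma~\ref{well} to truncate the sum defining $D_\infty^\theta$ so that only finitely many shells can contribute atoms in a given half-line $(y,\infty)$, and then to control each individual shell with a first-moment (Markov) bound. Fix $y\in\R$ and pick $\vep\in(0,\frac{1}{2}(\psi'(\theta)-\psi(\theta)/\theta))$ as in Lemma~\ref{well}. The events $A_{n,\vep}$ are nested (passing from $\ell\geq n$ to $\ell\geq n+1$ can only decrease the supremum) and by Lemma~\ref{well} their union $A_\vep:=\bigcup_n A_{n,\vep}$ is of full measure. On $A_{n,\vep}$, every atom contributed by a shell $\ell\geq n$ satisfies
\[b_\ell(i)+V^{(i,\ell)}(u)-S_\ell \leq M^{(i,\ell)}-S_\ell \leq \max_{k\geq 1} M^{(k,\ell)}-S_\ell < -\vep\ell \leq -\vep n.\]
Choosing $n$ large enough so that $-\vep n\leq y$, the tail shells $\ell\geq n$ contribute nothing to $(y,\infty)$ on $A_{n,\vep}$, so it suffices to show that for each fixed $k$ the quantity
\[E_k^y := \sum_{i\neq w^{(k)}}\sum_{|u|=k-1}\mathbf{1}_{\{b_k(i)+V^{(i,k)}(u)>y+S_k\}}\]
is almost surely finite.

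For the shell bound I would apply Markov. Using $\mathbf{1}_{\{x>t\}}\leq e^{\theta(x-t)}$ for $\theta>0$ and extending the outer sum over $i\neq w^{(k)}$ to all $i\geq 1$,
\[E_k^y \leq e^{-\theta(y+S_k)}\sum_{i\geq 1}e^{\theta b_k(i)}\sum_{|u|=k-1}e^{\theta V^{(i,k)}(u)}.\]
Conditioning on $\mathcal{Y}:=\sigma(b_\ell(j),w^{(\ell)}:\ell\geq 1, j\geq 1)$, the subtrees $V^{(i,k)}$ are independent of $\mathcal{Y}$ and distributed as the original branching random walk, so the many-to-one formula gives $\E\bigl(\sum_{|u|=k-1}e^{\theta V^{(i,k)}(u)}\bigm|\mathcal{Y}\bigr)=e^{(k-1)\psi(\theta)}$ and hence
\[\E(E_k^y\mid\mathcal{Y})\leq e^{-\theta(y+S_k)+(k-1)\psi(\theta)}\sum_{i\geq 1}e^{\theta b_k(i)}.\]
Since $\psi(\theta)<\infty$ implies $\sum_{|u|=1}e^{\theta V(u)}<\infty$ $\bP$-a.s., and by absolute continuity of the size-biased law with respect to $\bP$ one gets $\sum_{i\geq 1}e^{\theta b_k(i)}<\infty$ almost surely, the right-hand side is a.s. finite. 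Hence $E_k^y<\infty$ a.s., which combined with the truncation yields $D_\infty^\theta((y,\infty))<\infty$ a.s.

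For the vague convergence, $D_n^\theta$ is an increasing sequence of point measures with pointwise limit $D_\infty^\theta$. Given a continuous $\phi$ compactly supported in $[a,b]$, on the a.s. event $A_\vep$ one selects $N$ large enough that $A_{N,\vep}$ holds and $-\vep N\leq a$; then for every $n\geq N$ the atoms of the residual measure $D_\infty^\theta-D_n^\theta$ all lie strictly below $a$, so $\langle D_n^\theta,\phi\rangle=\langle D_\infty^\theta,\phi\rangle$ for all such $n$, and therefore $D_n^\theta\to D_\infty^\theta$ vaguely almost surely. The main technical step is the Markov estimate combined with the almost sure finiteness of $\sum_i e^{\theta b_k(i)}$; once these are in hand, the tail truncation via Lemma~\ref{well} is essentially bookkeeping.
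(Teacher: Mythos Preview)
Your proof is correct and follows essentially the same approach as the paper: use Lemma~\ref{well} to see that, on an almost sure event, shells $\ell\geq n$ contribute no atoms above a given level for $n$ large, and then bound each individual shell $E_k^y$ by a conditional Markov/many-to-one estimate together with the almost sure finiteness of $\sum_i e^{\theta b_k(i)}$ (which is exactly the paper's equation~(\ref{finite})). The vague convergence argument is likewise the same eventual-stabilization argument as in the paper.
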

\begin{proof}
Fix $\vep\in(0,\frac{1}{2}(\psi'(\theta)-\frac{\psi(\theta)}{\theta}))$.
Recall that $M^{(k,\ell)}=b_\ell(k)+\max_{|u|=\ell-1}V^{(k,\ell)}(u)$. Observe that
\[A_{n,\vep}\subset\{\lim_{n\to\infty}\max_{\ell\geq n}\{\max_{k\geq 1}M^{(k,\ell)}-S_{\ell}\}=-\infty\},\]
which, by Lemma \ref{well}, implies that
\begin{equation}\label{almostsurely}
\bP(\lim_{n\to\infty}\max_{\ell\geq n}\{\max_{k\geq 1}M^{(k,\ell)}-S_{\ell}\}=-\infty)=1.
\end{equation}
Fix $y\in \R$, by (\ref{almostsurely}), in order to prove $\bP(D_\infty^\theta((y,\infty))<\infty)=1$, it is sufficient to prove that for any $n\geq 1$,
\[\bP(D_n^\theta((y,\infty))<\infty)=1.\]
Observe that for any $n\geq 1$, by the independence between $(V^{(k,\ell)}(u):u\in\T)_{k\geq 1,\ell\geq 1}$ and $\mathcal{Y}$, which is defined in Lemma \ref{well}, we have
\[
\bP(D_n^\theta((y,\infty))<\infty|\mathcal{Y})=\bP\left.\left(\sum_{\ell=1}^n\sum_{k\geq 1}\sum_{|u|=k-1}1_\{y_{k,\ell}+V^{(k,\ell)}(u)-y_\ell>y\}<\infty\right)\right|_{y_{k,\ell}=b_\ell(k),y_\ell=S_\ell}.
\]
For any $y_{k,\ell},y_\ell\in\R$, by Markov inequality and many-to-one formula, we have
\[
\E\left(\sum_{\ell=1}^n\sum_{k\geq 1}\sum_{|u|=k-1}1_\{y_{k,\ell}+V^{(k,\ell)}(u)-y_\ell>y\}\right)\leq \sum_{\ell=1}^n\sum_{k\geq 1}e^{(k-1)\psi(\theta)}e^{\theta( y_{k,\ell}-y_\ell-y)}.
\]
According to the assumption that $\psi(\theta)<\infty$, we know that for any $\ell\geq 1$,
\begin{equation}\label{finite}
\bP\left(\sum_{k\geq 1}e^{\theta b_\ell(k)}<\infty\right)=\E\left(\sum_{|u|=1}e^{\theta V(u)-\psi(\theta)}1_{\{\sum_{|u|=1}e^{\theta V(u)}<\infty\}}\right)=1,
\end{equation}
which implies that almost surely
\[\bP(D_n^\theta((y,\infty))<\infty|\mathcal{Y})=1.\]
Thus, we obtain that $\bP(D_n^\theta((y,\infty))<\infty)=1.$

 For any $\phi\in C_c(\R)$, where $C_c(\R)$ is the set of continuous function with compact support on $\R$, by (\ref{almostsurely}), there exists a random time $N$ satisfying for any $n>N$,
 \[\max_{\ell\geq n}\{\max_{k\geq 1}M^{(k,\ell)}-S_{\ell}\}< \text{supp}(\phi)\]
where $\text{supp}(\phi)$ is the closure of $\{x\in\R:|\phi(x)|>0\}.$
Therefore $D_\infty^\theta(\phi)=D_N^\theta(\phi)=D_n^\theta(\phi)$ for any $n>N$, which yields
\[\lim_{n\to\infty}D_n^\theta(\phi)=D_\infty^\theta(\phi),\quad \text{a.s.,}\]
where $\mu(\phi):=\int_{\R}\phi(x)\mathrm{d}\mu(x)$ for a measure $\mu$ on $\R$.
\end{proof}
\subsection{Applications of local limit theorem of a non-lattice random walk}\label{llt}
 In this section, we make use of the local limit theorem in \cite{stone} to give estimates on the law of a non-lattice random walk, which is important for studying the asymptotic behavior of $D_n^\theta$ and $S_n$ in the Section 3.
\begin{lemma}\label{rw}
Consider a non-lattice random walk $(S_n)_{n\geq 0}$ with $S_0=0$. Assume that $S_1$ has mean 0 and finite variance $\sigma^2$.
Then for any non-negative direct Riemann integrable function $g:\R\to[0,\infty)$,
we have
\[\lim_{n\to\infty}\sup_{|y|\leq \gamma_n}\left|\sqrt{n}e^{\frac{y^2}{2\sigma^2n}}\E(g(S_n+y))-\frac{1}{\sqrt{2\pi}\sigma}\int_{\R}g(x)\mathrm{d}x\right|=0,\]
where $(\gamma_n)_{n\geq 0}$ is a non-negative sequence such that $\gamma_n=O(\sqrt{n})$.
\end{lemma}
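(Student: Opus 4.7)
The plan is to deduce the lemma directly from Stone's local limit theorem \cite{stone}, which, applied to a non-lattice random walk $(S_n)_{n\geq 0}$ with mean zero and variance $\sigma^{2}\in(0,\infty)$, provides precisely the uniform asymptotic
\[
\sup_{y\in\R}\left|\sqrt{n}\,\E(g(S_{n}+y))-\frac{1}{\sqrt{2\pi}\,\sigma}\,e^{-y^{2}/(2\sigma^{2} n)}\int_{\R}g(x)\,\mathrm{d}x\right|\longrightarrow 0
\]
as $n\to\infty$, for every directly Riemann integrable $g:\R\to[0,\infty)$. In this form, the lemma is essentially a cosmetic restatement obtained by multiplying through by the Gaussian correction factor $e^{y^{2}/(2\sigma^{2} n)}$ and then restricting to the range $|y|\leq\gamma_n$.

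Concretely, I would apply the triangle inequality in the form
\[
\left|\sqrt{n}\,e^{y^{2}/(2\sigma^{2} n)}\E(g(S_{n}+y))-\frac{1}{\sqrt{2\pi}\,\sigma}\int_{\R}g\right| \leq e^{y^{2}/(2\sigma^{2} n)}\left|\sqrt{n}\,\E(g(S_{n}+y))-\frac{e^{-y^{2}/(2\sigma^{2} n)}}{\sqrt{2\pi}\,\sigma}\int_{\R}g\right|,
\]
and observe that the hypothesis $\gamma_n=O(\sqrt{n})$ furnishes a constant $K>0$ with $\gamma_n^{2}\leq Kn$ for every $n$, hence
\[
\sup_{|y|\leq\gamma_{n}}e^{y^{2}/(2\sigma^{2}n)}\leq e^{K/(2\sigma^{2})},
\]
a finite bound independent of $n$. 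Taking supremum over $|y|\leq\gamma_n$ on both sides of the triangle inequality and invoking Stone's theorem on the right hand side then produces the desired $o(1)$ bound uniformly in $|y|\leq\gamma_n$.

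There is essentially no real obstacle in this argument: the delicate analytic content, namely the uniform local central limit theorem for directly Riemann integrable test functions in the non-lattice case, is already packaged in \cite{stone}, and the only verification needed is that the exponential prefactor stays bounded over the range $|y|\leq\gamma_n$, which is precisely the purpose of the hypothesis $\gamma_n=O(\sqrt{n})$. If one wished to avoid invoking Stone's theorem as a black box, one could reduce the general $g$ to indicators of short intervals using direct Riemann integrability, apply the local limit theorem at the level of such indicators, and then pass to the limit by monotone approximation from above and below; since the paper explicitly references \cite{stone} for this subsection, however, the black-box route appears to be the intended one.
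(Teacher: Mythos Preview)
Your argument is correct, and you have in fact identified both routes. The paper takes precisely what you call the fallback: it invokes Stone's Corollary~1 only at the level of indicators of intervals of length $h$, bounds $\sqrt{n}\,\E(g(S_n+s))$ from above and below by Riemann sums of mesh $h$ truncated at level $M$, uses the interval local limit theorem to replace each $\sqrt{n}\,\bP(S_n\in[kh-s,(k+1)h-s))$ by $\frac{h}{\sqrt{2\pi}\sigma}e^{-((2k+1)h/2-s)^2/(2n\sigma^2)}$ up to an $\vep_1 h$ error, and finally sends $h\to 0$, $M\to\infty$, $\vep_1\to 0$ using direct Riemann integrability of $g$. Your black-box route assumes the dRi version of Stone as given and then only has to observe that $\sup_{|y|\leq\gamma_n}e^{y^2/(2\sigma^2 n)}$ is bounded; this is certainly shorter, but hinges on exactly what form of the local limit theorem one is willing to quote. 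The paper's more explicit approach is self-contained from the interval version, and along the way records the uniform bound $\sup_{n}\sup_{s\in\R}\sqrt{n}\,\E(g(S_n+s))<\infty$ (their inequality~(\ref{upper})), which is reused in Lemmas~\ref{part1} and~\ref{part2}.
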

\begin{proof}
For any $M>0$, $h>0$ and $s\in\R$, we have
\[
\sqrt{n}\E(g(S_n+s))\leq \sum_{ k\in\Z}\sup_{y\in[kh,(k+1)h)}g(y)\sqrt{n}\bP(S_n\in[kh-s,(k+1)h-s)).
\]
According to Corollary 1 in \cite{stone}, for $\vep_1>0$ and $h>0$, for $n$ large enough, uniformly in $s\in\R$, we have
\begin{equation}\label{upper}
\begin{split}
 \sqrt{n}\E(g(S_n+s))
&\leq \sum_{-\frac{M}{h}\leq k\leq \frac{M}{h}}\sup_{x\in[kh,(k+1)h)}g(x) \frac{1}{\sqrt{2\pi}\sigma}he^{-\frac{\left(\frac{(2k+1)h}{2}-s\right)^2}{2n\sigma^2}}\\
&+\sum_{k\leq -\frac{M}{h}}\sup_{x\in[kh,(k+1)h)}g(x) \frac{1}{\sqrt{2\pi}\sigma}h\\
&+\sum_{k\geq \frac{M}{h}}\sup_{x\in[kh,(k+1)h)}g(x) \frac{1}{\sqrt{2\pi}\sigma}h+\sum_{k\in \Z}\sup_{x\in[kh,(k+1)h)}g(x)h\vep_1.
\end{split}
\end{equation}
In particular, by (\ref{upper}), we have
\[\sup_{n\geq 1}\sqrt{n}\E(g(S_n+s))\leq \left(\frac{1}{\sqrt{2\pi}\sigma}+\vep_1\right)\sum_{k\in \Z}\sup_{x\in[kh,(k+1)h)}g(x)<\infty,\]
for $h$ small enough as $g$ is direct Riemann integrable.

On the other hand, we know that  for any $M>0$, $h>0$ and $s\in\R$,
\[
\sqrt{n}\E(g(S_n+s))\geq \sum_{-\frac{M}{h}\leq k\leq \frac{M}{h}}\inf_{x\in[kh,(k+1)h)}g(x)\sqrt{n}\bP(S_n\in[kh-s,(k+1)h-s))
\]
Similarly to above, for all $\vep_1>0$ and $h>0$, for $n$ large enough, uniformly in $s\in\R$,
\begin{equation}\label{lower}
\begin{split}
\sqrt{n}\E(g(S_n+s))
\geq &\sum_{-\frac{M}{h}\leq k\leq \frac{M}{h}}\inf_{x\in[kh,(k+1)h)}g(x)\frac{1}{\sqrt{2\pi}\sigma}\delta_ne^{-\frac{\left(\frac{(2k+1)h}{2}-s\right)^2}{2n\sigma^2}}\\
\quad\quad &-\sum_{-\frac{M}{h}\leq k\leq \frac{M}{h}}\inf_{x\in[kh,(k+1)h)}g(x)h\vep_1.
\end{split}
\end{equation}
Using that $\gamma_n=O(\sqrt n)$, we have
\[\lim_{n\to\infty}\sup\left\{\left|e^{\frac{y^2}{2\sigma^2 n}}e^{-\frac{\left(\frac{(2k+1)h}{2}-y\right)^2}{2n\sigma^2}}-1\right|:|y|\leq \gamma_n,-\frac{M}{h}\leq k\leq \frac{M}{h}\right\}= 0,\]
which, combined with (\ref{upper}) and (\ref{lower}), yields that for $h>0$,
\[
\begin{split}
&\limsup_{n\to\infty}\sup_{|y|\leq \gamma_n}\left|\sqrt{n}e^{\frac{y^2}{2\sigma^2 n}}\E(g(S_n+y))-\frac{1}{\sqrt{2\pi}\sigma}\int_{\R}g(x)\mathrm{d}x\right|\\
\leq&\frac{1}{\sqrt{2\pi}\sigma}\left|\sum_{-\frac{M}{h}\leq k\leq \frac{M}{h}}\sup_{x\in[kh,(k+1)h)}g(x) h-\int_{\R}g(x)\mathrm{d}x\right|+C\sum_{k\in \Z}\sup_{x\in[kh,(k+1)h)}g(x)h\vep_1\\
&+\frac{1}{\sqrt{2\pi}\sigma}\left|\int_{\R}g(x)\mathrm{d}x-\sum_{-\frac{M}{h}\leq k\leq \frac{M}{h}}\inf_{x\in[kh,(k+1)h)}g(x)h\right|\\
&+C\sum_{k\geq \frac{M}{h}}\sup_{x\in[kh,(k+1)h)}g(x)h+C\sum_{k\leq -\frac{M}{h}}\sup_{x\in[kh,(k+1)h)}g(x)h.
\end{split}
\]
As $g$ is directly Riemann integral, letting $h\to 0+$, we have
\[
\begin{split}
&\limsup_{n\to\infty}\sup_{|y|\leq \gamma_n}\left|\sqrt{n}e^{\frac{y^2}{2\sigma^2 n}}\E(g(S_n+y))-\frac{1}{\sqrt{2\pi}\sigma}\int_{\R}g(x)\mathrm{d}x\right|\\
\leq &C\left(\int_M^\infty g(x)\mathrm{d}x+\int_{-\infty}^{-M}g(x)\mathrm{d}x+\vep_1\int_{\R}g(x)\mathrm{d}x\right).
\end{split}
\]
Letting $M\to\infty$ and $\vep_1\to 0+$, we get
\[\lim_{n\to\infty}\sup_{|y|\leq \gamma_n}\left|\sqrt{n}e^{\frac{y^2}{2\sigma^2 n}}\E(g(S_n+y))-\frac{1}{\sqrt{2\pi}\sigma}\int_{\R}g(x)\mathrm{d}x\right|=0.\qedhere\]
\end{proof}

\section{Limit estimates for the auxiliary point process}\label{app}
The main aim of this section is to prove the forthcoming Proposition \ref{main}, which is an essential tool for the proof of Theorems \ref{th2} and \ref{th1}.
\begin{proposition}\label{main}
Assume (\ref{as1}), (\ref{as2}), (\ref{as3}) and (\ref{as4}). Then for any non-negative function $\phi\in C_c(\R)$, as $n\to\infty$, we have
\[\sqrt{2\pi n}\theta \sigma e^{\frac{y^2}{2\sigma^2n}}e^{\theta y}e^{n(\theta\psi'(\theta)-\psi(\theta))}\E(e^{-\mathcal{E}_n(\phi)}1_{\{M_n\geq n\psi'(\theta)+y\}})\to\E\left[\frac{1}{D_\infty^\theta(\{0\})}e^{-D_\infty^\theta(\phi)}1_{\{D_\infty^{\theta}((0,\infty))=0\}}\right]\]
 uniformly in $|y|\leq a_n$, where $(a_n)_{n\geq 0}$ is the sequence defined in Theorem \ref{th2}.
\end{proposition}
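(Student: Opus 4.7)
The plan begins with Lemma~\ref{lemD}, applied with $F(\mu)=e^{-\mu(\phi)}$ and $f(s)=1_{\{s\geq y\}}$: since $\{M_n\geq n\psi'(\theta)+y\}\subset\{Z_n(\R)>0\}$, the left-hand side of the claim rewrites as
\[
\sqrt{2\pi n}\,\theta\sigma\,e^{y^2/(2\sigma^2n)}\,\E\!\left[g(\tilde S_n-y)J_n\right],
\]
where $\tilde S_n:=S_n-n\psi'(\theta)$ is centered with variance $\sigma^2 n$ and non-lattice by~(\ref{as4}), the weight $g(x):=e^{-\theta x}1_{\{x\geq0\}}$ is directly Riemann integrable with $\int_{\R}g=1/\theta$, and
\[
J_n:=\frac{e^{-D_n^\theta(\phi)}}{D_n^\theta(\{0\})}\,1_{\{D_n^\theta((0,\infty))=0\}}\in[0,1].
\]
Writing $J_\infty$ for the analogous functional of $D_\infty^\theta$, the proposition reduces to showing that $\sqrt{2\pi n}\,\theta\sigma\,e^{y^2/(2\sigma^2n)}\E[g(\tilde S_n-y)J_n]\to\E[J_\infty]$ uniformly in $|y|\leq a_n$.

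The strategy is truncation plus a local limit theorem. Fix $\vep\in(0,\tfrac12(\psi'(\theta)-\psi(\theta)/\theta))$, $L>0$ with $\mathrm{supp}(\phi)\subset[-L,L]$, and an integer $K>L/\vep$. On the event $A_{K,\vep}$ of Lemma~\ref{well}, every atom of $D_n^\theta$ with outer index $\ell>K$ in~(\ref{d1}) lies below $-\vep\ell\leq -L$, so $J_n=J_K$ there, where $J_K$ is obtained by truncating the outer sum in~(\ref{d1}) at $k=K$. The decomposition
\[
\E[g(\tilde S_n-y)J_n]=\E[g(\tilde S_n-y)J_K]+\E[g(\tilde S_n-y)(J_n-J_K)1_{A_{K,\vep}^c}]
\]
isolates a main term and an error. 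For the main term, $J_K$ is measurable for $\mathcal{G}_K:=\sigma(b_j(\cdot),w^{(j)},V^{(i,j)};\,j\leq K)$, while $\tilde R_{n-K}:=\tilde S_n-\tilde S_K$ is independent of $\mathcal{G}_K$ with the law of $\tilde S_{n-K}$; conditioning yields $\E[g(\tilde S_n-y)J_K]=\E[J_K F_n(y-\tilde S_K)]$ with $F_n(s):=\E[g(\tilde S_{n-K}-s)]$. Applying Lemma~\ref{rw} to $\tilde S_{n-K}$ gives, for each $\omega$ and uniformly in $|y|\leq a_n$ (using $|y-\tilde S_K(\omega)|\leq a_n+|\tilde S_K(\omega)|=O(\sqrt{n-K})$ for fixed $K$),
\[
\sqrt{n-K}\,e^{(y-\tilde S_K)^2/(2\sigma^2(n-K))}\,F_n(y-\tilde S_K)\longrightarrow\frac{1}{\sqrt{2\pi}\sigma\theta}.
\]
Together with $\sqrt{n/(n-K)}\to 1$, the uniform estimate $|y^2/(2\sigma^2n)-(y-\tilde S_K)^2/(2\sigma^2(n-K))|\leq C(1+|\tilde S_K|+\tilde S_K^2)/\sqrt n$ on $|y|\leq a_n$, and dominated convergence (domination by the uniform upper bound from~(\ref{upper})), this delivers $\sqrt{2\pi n}\,\theta\sigma\,e^{y^2/(2\sigma^2n)}\E[g(\tilde S_n-y)J_K]\to\E[J_K]$ uniformly in $|y|\leq a_n$. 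Finally, since $J_n\to J_\infty$ a.s.\ by Lemma~\ref{well plus} and $J_n\in[0,1]$, dominated convergence yields $\E[J_K]\to\E[J_\infty]$ as $K\to\infty$.

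The hard part will be the uniform control of the error $\E[g(\tilde S_n-y)(J_n-J_K)1_{A_{K,\vep}^c}]$, bounded by $\E[g(\tilde S_n-y)1_{A_{K,\vep}^c}]$, because $\tilde S_n$ and $A_{K,\vep}^c$ are coupled through the spine data beyond~$K$ and a naive Cauchy--Schwarz is wasteful: $\|g(\tilde S_n-y)\|_{L^2}=O(n^{-1/4})$ forces a bound $\sqrt n\cdot n^{-1/4}\cdot\bP(A_{K,\vep}^c)^{1/2}$ diverging in $n$. My plan is to introduce the first-passage index $\tau:=\inf\{\ell\geq K:\max_k M^{(k,\ell)}+\vep\ell\geq S_\ell\}$ (so $A_{K,\vep}^c=\{\tau<\infty\}$), split according to whether $\tau\leq n$ or $\tau>n$, and on $\{\tau=\ell\}$ with $\ell<n$ condition on $\mathcal{G}_\ell$ so that the uniform bound $\sqrt{n-\ell}\,F_{n,\ell}(y-\tilde S_\ell)\leq C$ from~(\ref{upper}) controls the contribution to $\sqrt n\,e^{y^2/(2\sigma^2 n)}\E[g(\tilde S_n-y)1_{A_{K,\vep}^c}]$ by $\sum_{\ell=K}^n\sqrt{n/(n-\ell)}\,\bP(\tau=\ell)$; this sum vanishes as $K\to\infty$ uniformly in $n$ thanks to the summability of the tail of $\tau$ that is built into the argument of Lemma~\ref{well} via~(\ref{conexp}), while $\{\tau>n,\tau<\infty\}$ is handled using the independence of the post-$n$ data from $\mathcal{G}_n$ together with $\bP(\tau>n,\tau<\infty)\leq\bP(A_{n,\vep}^c)\to 0$. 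Splicing the main-term convergence with the vanishing of the error completes the proof.
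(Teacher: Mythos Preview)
Your overall architecture matches the paper's: apply Lemma~\ref{lemD}, truncate the outer sum in $D_n^\theta$, handle the main term via Lemma~\ref{rw}, and bound the remainder. Your treatment of the main term is essentially the paper's Lemma~\ref{part1}; the one structural difference is that you freeze the truncation level $K$ and take $K\to\infty$ after $n\to\infty$, while the paper lets it grow as $c_n$ with $c_n\to\infty$ and $c_n^2/n$ bounded, so that $J_{c_n}\to J_\infty$ and the whole thing converges in a single limit. Either route is fine for the main term.

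The error term, however, is where your sketch has real gaps. First, the appeal to ``summability of the tail of $\tau$ built into Lemma~\ref{well} via~(\ref{conexp})'' does not deliver what you need: (\ref{conexp}) is a \emph{conditional} bound, and the paper only uses it together with Borel--Cantelli and the SLLN to get $\bP(A_{n,\vep}^c)\to 0$, not any unconditional rate on $\bP(B_\ell)$ where $B_\ell:=\{\max_k M^{(k,\ell)}-S_\ell\geq -\vep\ell\}$. Your sum $\sum_{\ell=K}^{n}\sqrt{n/(n-\ell)}\,\bP(\tau=\ell)$ (note the $\ell=n$ term is infinite as written) cannot be controlled uniformly in $n$ without quantitative decay of $\bP(B_\ell)$; this is precisely the content of Lemma~\ref{part2}, which decomposes $B_\ell$ into three sub-events and uses, in particular, Nagaev's moderate-deviation inequality for the piece $\{S_\ell<(\psi'(\theta)-\vep)\ell\}$. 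Second, your handling of $\{\tau>n,\tau<\infty\}$ is incorrect: the ``post-$n$ data'' is \emph{not} independent of $\mathcal G_n$, because $B_\ell$ for $\ell>n$ involves $S_\ell=S_n+(S_\ell-S_n)$ and hence depends on $S_n\in\mathcal G_n$. The bound $\bP(\tau>n,\tau<\infty)\leq\bP(A_{n,\vep}^c)\to 0$ is therefore useless on its own, since you need to kill $\sqrt n\,\E[g(\tilde S_n-y)1_{\{\tau>n,\tau<\infty\}}]$, and $g(\tilde S_n-y)$ and the event are correlated through $S_n$.

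The paper resolves both issues in Lemma~\ref{part2} by skipping the stopping time altogether and bounding $\sum_{\ell\geq c_n}\sqrt n\,\E[g(\tilde S_n-y)1_{B_\ell}]$ directly, splitting the range of $\ell$ into $[c_n,n/2]$, $[n/2,n]$, and $(n,\infty)$, and inside each $B_\ell$ separating the contributions of $\{S_\ell<(\psi'(\theta)-\vep)\ell\}$, $\{\sum_k e^{\theta b_\ell(k)}>e^{c\ell}\}$, and the remaining exponentially small piece. Your conditioning-on-$\mathcal G_\ell$ idea is the right first move for $\ell<n$, but you still need these case-by-case estimates to close the argument; and for $\ell\geq n$ you must instead condition out the \emph{increment} $S_\ell-S_n$ (or $b_\ell(w^{(\ell)})$) as the paper does, not rely on a false independence.
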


This proposition will allow us to study the asymptotic behavior of the joint law of the extremal process $\mathcal{E}_n$ and the maximal displacement $M_n$.

Let $(c_n)_{n\geq 0}$ be a non-negative integer sequence, satisfying
\begin{equation}\label{cn}
c_n<n,\quad\lim_{n\to\infty}c_n=\infty\quad\text{and}\quad\limsup_{n\to\infty}\frac{c_n^2}{n}<\infty.
\end{equation}
For $\vep>0$, recall that
\begin{equation}\label{An}
A_{c_n,\vep}:=\left\{\max_{\ell\geq c_n}\frac{\max_{k\geq 1}\{b_\ell(k)+\max_{|u|=\ell-1}V^{(k,\ell)}(u)\}-S_\ell}{\ell}<-\vep\right\}.
\end{equation}
According to Lemma \ref{lemD}, for any non-negative function $\phi\in C_c(\R)$ and $y\in \R$,
 we have
\[
\begin{split}
&\E(e^{-\mathcal{E}_n(\phi)}1_{\{M_n-n\psi'(\theta)\geq y\}})\\
=&e^{n(\psi(\theta)-\theta\psi'(\theta))}\E\left[\frac{1}{D^\theta_n(\{0\})}e^{-\theta (S_n-n \psi'(\theta))}e^{-D^\theta_n(\phi)}1_{\{S_n-\psi'(\theta) n\geq y\}}1_{\{D^\theta_n((0,\infty))=0\}}\right].
\end{split}
\]
Set
\begin{equation}\label{h}
h_n^\theta(\phi):=\frac{1}{D^\theta_n(\{0\})}e^{-D^\theta_n(\phi)}1_{\{D^\theta_n((0,\infty))=0\}},
\end{equation}
and let $x_\phi:=\inf \text{supp}(\phi)$. For $n$ such that $-\vep c_n<x_\phi$, we observe that $h^\theta_n(\phi)=h^\theta_{c_n}(\phi)$ on $A_{c_n,\vep}$, therefore
\begin{equation}\label{split}
\begin{split}
&\E\left[\frac{1}{D^\theta_n(\{0\})}e^{-\theta (S_n-n \psi'(\theta))}e^{-D^\theta_n(\phi)}1_{\{S_n-\psi'(\theta) n\geq y\}}1_{\{D^\theta_n((0,\infty))=0\}}\right]\\
=&\E[h_n^\theta(\phi) e^{-\theta(S_n-n\psi'(\theta))}1_{\{S_n-n\psi'(\theta)\geq y\}}1_{A_{c_n,\vep}}]+\E[h_n^\theta(\phi) e^{-\theta(S_n-n\psi'(\theta))}1_{\{S_n-n\psi'(\theta)\geq y\}}1_{A_{c_n,\vep}^c}]\\
=&I_1(n,y)+I_2(n,y,\vep)+I_3(n,y,\vep),
\end{split}
\end{equation}
where
\[
\begin{split}
&I_1(n,y):=\E[h_{c_n}^\theta(\phi) e^{-\theta(S_n-n\psi'(\theta))}1_{\{S_n-n\psi'(\theta)\geq y\}}], \\
&I_2(n,y,\vep):=-\E[h_{c_n}^\theta(\phi) e^{-\theta(S_n-n\psi'(\theta))}1_{\{S_n-n\psi'(\theta)\geq y\}}1_{A_{c_n,\vep}^c}],\\
&I_3(n,y,\vep):=\E[h_{n}^\theta(\phi) e^{-\theta(S_n-n\psi'(\theta))}1_{\{S_n-n\psi'(\theta)\geq y\}}1_{A_{c_n,\vep}^c}].
\end{split}
\]

Using Lemma \ref{rw}, it is straightforward to get an equivalent of $I_1(n,y)$ and the decay rate of $I_2(n,y,\vep)$ and $I_3(n,y,\vep)$ with respect to $n$. We prove these two results in turn in the following lemmas.
\begin{lemma}\label{part1}
Assume (\ref{as1}), (\ref{as2}) and (\ref{as4}).
Then for any non-negative function $\phi\in C_c(\R)$,
\[\lim_{n\to\infty}\sup_{|y|\leq a_n}\left|\sqrt{n}e^{\frac{y^2}{2\sigma^2n}}e^{\theta y}I_1(n,y)-\frac{1}{\sqrt{2\pi}\sigma\theta}\E\left(\frac{1}{D_\infty^\theta(\{0\})}e^{-D_\infty^\theta(\phi)}1_{\{D_\infty^\theta((0,\infty))=0\}}\right)\right|=0,\]
where $(a_n)_{n\geq 0}$ is the sequence defined in Theorem \ref{th2}.
\end{lemma}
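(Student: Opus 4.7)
The plan is to decompose the spine walk at time $c_n$ and apply the local limit theorem (Lemma \ref{rw}) to the independent tail of $n-c_n$ steps. Let $\mathcal{G}_{c_n}:=\sigma(b_k(i),w^{(k)},V^{(i,k)}:k\leq c_n,\,i\geq 1)$; then $h_{c_n}^\theta(\phi)$ and $T_{c_n}:=S_{c_n}-c_n\psi'(\theta)$ are $\mathcal{G}_{c_n}$-measurable, whereas $S_n-S_{c_n}$ is independent of $\mathcal{G}_{c_n}$ and distributed as $S_{n-c_n}$. Writing $T_k:=S_k-k\psi'(\theta)$ and
\[F_m(z):=\E\bigl[e^{-\theta T_m'}1_{\{T_m'\geq z\}}\bigr]\]
for $T_m'$ an independent copy of $T_m$, conditioning on $\mathcal{G}_{c_n}$ gives
\[I_1(n,y)=\E\bigl[h_{c_n}^\theta(\phi)\,e^{-\theta T_{c_n}}F_{n-c_n}(y-T_{c_n})\bigr].\]

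Next I would estimate $F_m$. The function $g(x):=e^{-\theta x}1_{\{x\geq 0\}}$ is directly Riemann integrable on $\R$ with $\int g=1/\theta$, so the identity $F_m(z)=e^{-\theta z}\E[g(T_m'-z)]$ together with Lemma \ref{rw} yields, uniformly in $|z|\leq\gamma_m=O(\sqrt m)$,
\[\sqrt m\,e^{z^2/(2\sigma^2 m)}e^{\theta z}F_m(z)\longrightarrow\frac{1}{\sqrt{2\pi}\sigma\theta},\]
together with the global upper bound $\sup_{m,z}\sqrt m\,e^{\theta z}F_m(z)\leq C$ from inequality (\ref{upper}) in the proof of Lemma \ref{rw}. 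Substituting into $I_1(n,y)$, the exponential factors $e^{-\theta T_{c_n}-\theta(y-T_{c_n})}$ cancel $e^{\theta y}$, and the target quantity reduces, up to a prefactor $\sqrt{n/(n-c_n)}\to 1$, to $\tfrac{1}{\sqrt{2\pi}\sigma\theta}\E[h_{c_n}^\theta(\phi)\exp(Q_n(y,T_{c_n}))]$, where $Q_n(y,u):=\tfrac{y^2}{2\sigma^2 n}-\tfrac{(y-u)^2}{2\sigma^2(n-c_n)}$. To make the local limit step uniform in $y$ I would truncate on $E_M:=\{|T_{c_n}|\leq M\sqrt{c_n}\}$: on $E_M$ one has $|y-T_{c_n}|=O(\sqrt{n-c_n})$, so Lemma \ref{rw} applies with uniform remainder; on $E_M^c$, Chebyshev applied to $T_{c_n}$ (variance $\sigma^2 c_n$) combined with the global bound on $F_m$ contributes $O(M^{-2})$.

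Finally, $u\mapsto Q_n(y,u)$ attains its maximum $y^2/(2\sigma^2 n)\leq C$ at $u=y$, so $\exp(Q_n)$ is uniformly bounded; and since $c_n^2/n$ is bounded, $c_n/n\to 0$ and $|y|\leq a_n=O(\sqrt n)$, an explicit computation gives $Q_n(y,T_{c_n})\to 0$ uniformly on $\{|y|\leq a_n\}\cap E_M$. By Lemma \ref{well plus} and the fact that, almost surely, no atom of $D_n^\theta-D_{c_n}^\theta$ falls in $\mathrm{supp}(\phi)\cup[0,\infty)$ for $n$ large, $h_{c_n}^\theta(\phi)\to h_\infty^\theta(\phi):=\tfrac{1}{D_\infty^\theta(\{0\})}e^{-D_\infty^\theta(\phi)}1_{\{D_\infty^\theta((0,\infty))=0\}}$ almost surely. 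Writing
\[\E[h_{c_n}^\theta(\phi)\exp(Q_n)]-\E[h_\infty^\theta(\phi)]=\E[(h_{c_n}^\theta(\phi)-h_\infty^\theta(\phi))\exp(Q_n)]+\E[h_\infty^\theta(\phi)(\exp(Q_n)-1)]\]
and handling the first term by dominated convergence (vanishing independently of $y$) and the second by the $E_M$-truncation (first $n\to\infty$, then $M\to\infty$) closes the argument. The main obstacle is preserving uniformity in $y$ throughout; this is what forces both the truncation at $M\sqrt{c_n}$ and the uniform-tail formulation of Lemma \ref{rw}.
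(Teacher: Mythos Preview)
Your proposal is correct and follows the same overall decomposition as the paper: condition on the first $c_n$ spine steps, apply Lemma~\ref{rw} to the independent tail of length $n-c_n$, and pass to the limit in $h_{c_n}^\theta(\phi)$ via Lemma~\ref{well plus} and dominated convergence.

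The one genuine difference is how you control the random shift $T_{c_n}=S_{c_n}-c_n\psi'(\theta)$. You truncate on $E_M=\{|T_{c_n}|\leq M\sqrt{c_n}\}$, use Chebyshev on $E_M^c$, and then let $M\to\infty$ after $n\to\infty$. The paper instead invokes the strong law of large numbers: since $T_{c_n}/c_n\to 0$ a.s.\ and $c_n/\sqrt{n-c_n}$ is bounded, one has $T_{c_n}=o(\sqrt{n-c_n})$ almost surely, so for almost every $\omega$ the argument $T_{c_n}(\omega)-y$ stays in a window of size $O(\sqrt{n-c_n})$ and Lemma~\ref{rw} applies pathwise; dominated convergence (with the global bound $\sup_{s}\sqrt{n}\,g_n(s)\leq C$ and $\sup_{|y|\leq a_n}e^{y^2/(2\sigma^2 n)}\leq C$) then finishes. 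This avoids your truncation and the $M\to\infty$ step entirely, and it also makes your $Q_n$ analysis automatic: the same a.s.\ bound $T_{c_n}=o(\sqrt{n})$ gives $Q_n(y,T_{c_n})\to 0$ uniformly in $|y|\leq a_n$ without splitting into $E_M$ and $E_M^c$. Your route is a bit heavier but has the virtue of being fully explicit about the uniformity in $y$ and of not applying Lemma~\ref{rw} with an $\omega$-dependent window.
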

\begin{proof}
Observe that, by Markov property,
\[e^{\theta y}I_1(n,y)=\E(h_{c_n}^\theta(\phi)g_n(S_{c_n}-c_n\psi'(\theta)-y)),\]
where for $s\in\R$, we write
\[g_n(s):=\E(e^{-\theta(s+ S_{n-c_n}-(n-c_n) \psi'(\theta))}1_{\{S_{n-c_n}-(n-c_n) \psi'(\theta)+s\geq 0\}}).\]
By the assumptions on $(c_n)_{n\geq 0}$, it is straightforward to see that as $n\to\infty$,
\[\frac{n}{c_n}\to\infty\quad \text{and}\quad\frac{S_{c_n}-c_n\psi'(\theta)}{\sqrt{n-c_n}}\to 0,\quad \text{a.s.,}\]
by strong law of large numbers. Moreover, as $a_n=O(\sqrt{n-c_n})$,
by Lemma \ref{rw}, we get that as $n\to\infty$,
\[\sup_{|y|\leq a_n}\left|\sqrt{n}e^{\frac{y^2}{2\sigma^2n}}g_n(S_{c_n}-c_n\psi'(\theta)-y)-\frac{1}{\sqrt{2\pi}\sigma\theta}\right|\to 0,\quad \text{a.s.}\]
By Lemma \ref{well plus} and the monotonicity of $D_n^\theta$, we can get
\[\lim_{n\to\infty}h_{c_n}^\theta(\phi)= h_\infty^\theta(\phi),\quad\text{a.s.},\]
where $h_\infty^\theta(\phi):= \frac{1}{D_\infty^\theta(\{0\})}e^{-D_\infty^\theta(\phi)}1_{\{D_\infty^\theta((0,\infty))=0\}}$.
Then
\[
\begin{split}
&\sup_{|y|\leq a_n}\left|\sqrt{n}e^{\frac{y^2}{2\sigma^2 n}}e^{\theta y}I_1(n,\theta)-\frac{1}{\sqrt {2\pi}\sigma \theta}\E(h_\infty^\theta(\phi))\right|\\
\leq&\E\left(h_\infty^\theta(\phi)\sup_{|y|\leq a_n}\left|\sqrt{n}e^{\frac{y^2}{2\sigma^2 n}}g_n(S_{c_n}-\psi'(\theta)c_n-y)-\frac{1}{\sqrt {2\pi}\sigma\theta}\right|\right)\\
&+\E\left(\left|h_{c_n}^\theta(\phi)-h_\infty^\theta(\phi)\right|\sqrt{n}\sup_{|y|\leq a_n}e^{\frac{y^2}{2\sigma^2 n}}g_n(S_{c_n}-\psi'(\theta)c_n-y)\right).
\end{split}
\]
By (\ref{upper}), there exists $C>0$ and $N>0$ such that for any $n>N$,
\[\sup_{s\in\R}\sqrt{n}g_n(s)<C.\]
Because $a_n=O(\sqrt n)$, we know that there exists $C>0$ such that for any $n\geq 1$,
\begin{equation}\label{bound}
\sup_{|y|\leq a_n}e^{\frac{y^2}{2\sigma^2 n}}<C.
\end{equation}
Besides, using that for any $n\geq 1$, $h_n^\theta(\phi)\leq 1,$
by dominated convergence theorem, we conclude that,
\[\lim_{n\to\infty}\sup_{|y|\leq a_n}\left|\sqrt{n}e^{\frac{y^2}{2\sigma^2 n}}e^{\theta y}I_1(n,y)-\frac{1}{\sqrt {2\pi}\sigma\theta}\E\left(\frac{1}{D_\infty^\theta(\{0\})}e^{-D_\infty^\theta(\phi)}1_{\{D_\infty^\theta((0,\infty))=0\}}\right)\right|= 0.\qedhere\]
\end{proof}
Observe that
\[e^{\theta y}\E(|I_2(n,y,\vep)|+|I_3(n,y,\vep)|)\leq 2\E(e^{-\theta(S_n-n\psi'(\theta)-y)}1_{\{S_n\geq n\psi'(\theta)+y\}}1_{A_{c_n,\vep}^c}),\]
and (\ref{bound}), so it is sufficient to prove the right-hand side is $o(n^{-\frac{1}{2}})$ uniformly in $|y|\leq a_n$.

\begin{lemma}\label{part2}
Assume (\ref{as1}), (\ref{as2}), (\ref{as3}) and (\ref{as4}).
There exists $\vep>0$ such that
\[\lim_{n\to\infty}\sqrt{n}\sup_{|y|\leq a_n}\E(e^{-\theta(S_n-n\psi'(\theta)-y)}1_{\{S_n\geq n\psi'(\theta)+y\}}1_{A_{c_n,\vep}^c})= 0,\]
where $(a_n)_{n\geq 0}$ is the sequence defined in Theorem \ref{th2}.
\end{lemma}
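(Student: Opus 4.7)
The plan is a union-bound and Markov argument combined with the local limit estimate from Lemma \ref{rw}. Write $T_n := S_n - n\psi'(\theta)$ and $A_{c_n,\vep}^c \subseteq \bigcup_{\ell \geq c_n} B_\ell$ with $B_\ell := \{\max_{k\geq 1}(b_\ell(k) + \max_{|u|=\ell-1}V^{(k,\ell)}(u)) \geq S_\ell - \vep\ell\}$. A conditional Markov bound, exactly as in the proof of Lemma \ref{well}, gives $\bP(B_\ell | \mathcal{Y}) \leq e^{(\ell-1)\psi(\theta) + \vep\theta\ell - \theta S_\ell} \sum_{k\geq 1} e^{\theta b_\ell(k)}$, where $\mathcal{Y}$ denotes the sigma-field generated by the atoms and spine choices. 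To tame the random factor $\sum_k e^{\theta b_\ell(k)}$, I would introduce the good event $G_\ell := \{\sum_k e^{\theta b_\ell(k)} \leq e^{\vep\theta\ell}\}$, whose complement is summable by the $L\log L$ assumption (\ref{as3}) (via the Borel--Cantelli step used inside Lemma \ref{well}), and split the target expectation as $\Sigma_1 + \Sigma_2$ corresponding respectively to $B_\ell \cap G_\ell$ and $G_\ell^c$.

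For $\Sigma_2$, I would decouple $T_n = T_n^{(\ell)} + (b_\ell(w^{(\ell)}) - \psi'(\theta))$, where $T_n^{(\ell)}$ is the sum of the other $n-1$ iid centered steps and is independent of the time-$\ell$ data. Combined with the identity $e^{-\theta(T_n-y)} 1_{\{T_n\geq y\}} = \int_0^\infty \theta e^{-\theta u} 1_{\{y \leq T_n \leq y+u\}}\, du$, the uniform local density estimate supplied by Lemma \ref{rw} applied to $T_n^{(\ell)}$ yields $\sqrt{n-1}\, \bP(y \leq T_n \leq y+u,\, G_\ell^c) \leq C(u+1)\bP(G_\ell^c)$ uniformly in $y$. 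Integrating in $u$ against $\theta e^{-\theta u}$ and summing in $\ell$ gives $\sqrt n\, \Sigma_2 \leq C\sum_{\ell \geq c_n} \bP(G_\ell^c) \to 0$ since the series converges.

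For $\Sigma_1$, on $G_\ell$ the Markov bound simplifies to $\bP(B_\ell | \mathcal{Y}) 1_{G_\ell} \leq e^{(\ell-1)\psi(\theta) + 2\vep\theta\ell - \theta S_\ell}$. Setting $\vep' := \psi'(\theta) - \psi(\theta)/\theta > 0$ by assumption (\ref{as1}) and writing $S_\ell = T_\ell + \ell\psi'(\theta)$, the deterministic prefactor becomes $e^{-\psi(\theta) - \theta(\vep' - 2\vep)\ell}$, which decays exponentially provided $\vep$ is chosen smaller than $\vep'/2$. For the remaining expectation $\E(e^{-\theta(T_n-y)} 1_{\{T_n\geq y\}} e^{-\theta T_\ell})$, I would condition on $\mathcal{G}_\ell := \sigma(T_1,\ldots,T_\ell)$ and apply Lemma \ref{rw} to the independent tail $\tilde T_{n-\ell} := T_n - T_\ell$; the $\theta$-prefactor from $e^{-\theta(T_n-y)}$ combines with the exponential inside $\phi_{n-\ell}(y - T_\ell) := \E(e^{-\theta\tilde T_{n-\ell}}1_{\{\tilde T_{n-\ell}\geq y-T_\ell\}})$ so that the $\theta$-contributions cancel in $\E(e^{-\theta(T_n-y)} 1_{\{T_n\geq y\}} | \mathcal{G}_\ell)$, leaving a Gaussian-type factor of order $\frac{1}{\sqrt{n-\ell}} e^{-(y-T_\ell)^2/(2\sigma^2(n-\ell))}$. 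Multiplying by the remaining $e^{-\theta T_\ell}$ and integrating over the approximately centered normal law of $T_\ell$ produces a bound of order $\frac{1}{\sqrt n} e^{-c\ell}$ for some $c > 0$, so that $\sqrt n\,\Sigma_1 = O(e^{-c\,c_n}) \to 0$. The main obstacle is tracking this last Gaussian integration: it produces a residual factor of order $e^{\theta^2\sigma^2\ell(n-\ell)/(2n) - \theta y\ell/n}$ (uniform in $|y|\leq a_n$) that must be dominated by the exponential decay $e^{-\theta(\vep'-2\vep)\ell}$ coming from the Markov prefactor, which is ensured by choosing $\vep$ small enough relative to $\vep'$ and $\sigma^2$.
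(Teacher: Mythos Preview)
Your treatment of $\Sigma_2$ is fine, but the $\Sigma_1$ argument has a genuine gap. After the conditional Markov bound you are left with
\[
e^{-\psi(\theta)-\theta(\vep'-2\vep)\ell}\,\E\big(e^{-\theta(T_n-y)}1_{\{T_n\geq y\}}\,e^{-\theta T_\ell}\big),
\]
and the factor $e^{-\theta T_\ell}$ is not controllable under the stated hypotheses. First, nothing in (\ref{as1})--(\ref{as4}) forces $\psi(0)<\infty$, and one checks $\E(e^{-\theta T_1})=e^{\psi(0)-\psi(\theta)+\theta\psi'(\theta)}$; if $\psi(0)=\infty$ the expectation is infinite and no further argument is possible. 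Second, even granting $\psi(0)<\infty$ and the Gaussian heuristics you invoke, your own residual $e^{\theta^2\sigma^2\ell(n-\ell)/(2n)}$ is, for $\ell=o(n)$, of order $e^{\theta^2\sigma^2\ell/2}$; dominating it by $e^{-\theta(\vep'-2\vep)\ell}$ would require $\vep'>\theta\sigma^2/2$, which is \emph{not} a consequence of the assumptions. For instance, with binary branching and i.i.d.\ standard Gaussian displacements one has $\psi(\theta)=\log 2+\theta^2/2$, $\sigma^2=1$, $\vep'=\theta/2-(\log 2)/\theta<\theta/2=\theta\sigma^2/2$, so your inequality fails for every admissible $\theta$. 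The claim that ``choosing $\vep$ small enough relative to $\vep'$ and $\sigma^2$'' saves the day overlooks that $\vep'$ and $\sigma^2$ are both fixed by $\theta$.

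The paper avoids this trap by never carrying the random factor $e^{-\theta S_\ell}$. Instead of bounding $\bP(B_\ell\mid\mathcal Y)$ directly, it splits the event as
\[
B_\ell\subset\{S_\ell<(\psi'(\theta)-\vep)\ell\}\cup\{\max_{k}M^{(k,\ell)}>(\psi'(\theta)-2\vep)\ell\},
\]
and treats the two pieces separately: the first is a moderate--deviation event for the spine walk, handled via Nagaev's second-moment inequality (with a further case split $\ell\leq n/2$, $n/2\leq\ell\leq n$, $\ell>n$ to keep the local-limit factor $1/\sqrt{n-\ell}$ or $1/\sqrt{\ell}$ available); the second is bounded by a first-moment Markov estimate on $\max_k M^{(k,\ell)}$, after the same truncation $G_\ell$ you already used. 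This decomposition replaces the uncontrolled $e^{-\theta T_\ell}$ by deterministic indicator events whose probabilities are summable under (\ref{as2}) alone.
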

\begin{proof}
For the simplification of notation, we use $x$ instead of $\psi'(\theta)$ in the proof.
Observe that for $n\in\N_+$ and $|y|\leq a_n,$
\[
\begin{split}
&\E(e^{-\theta(S_n-nx-y)}1_{\{S_n\geq nx+y\}}1_{A_{c_n,\vep}^c})\\
\leq &\sum_{\ell\geq  c_n}\E(e^{-\theta(S_n-nx-y)}1_{\{S_n\geq nx+y\}};\max_{k\geq 1}\{b_\ell(k)+\max_{|u|=\ell-1}\{V^{(k,\ell)}(u)\}\}-S_{\ell}\geq -\vep\ell)
\end{split}
\]
Recall that $M^{(k,\ell)}=b_\ell(k)+\max_{|u|=\ell-1}\{V^{(k,\ell)}(u)\}$. Then for $\ell\geq c_n,$
\[
\begin{split}
&\E(e^{-\theta(S_n-nx-y)}1_{\{S_n\geq nx+y\}}1_{\{\max_{k\geq 1}M^{(k,\ell)}-S_{\ell}\geq -\vep\ell\}})\\
\leq &\E(e^{-\theta(S_n-nx-y)}1_{\{S_n\geq nx+y\}}1_{\{\max_{k\geq 1}M^{(k,\ell)}> (x-2\vep)\ell\}})+\E(e^{-\theta(S_n-nx-y)}1_{\{S_n\geq nx+y\}}1_{\{S_{\ell}<(x-\vep)\ell\}})\\
\leq &\E(e^{-\theta(S_n-nx-y)}1_{\{\max_{k\geq 1}M^{(k,\ell)}> (x-2\vep)\ell, \sum_{k\geq 1}e^{\theta b_\ell(k)}\leq e^{c\ell}\}})\\
&+\E(e^{-\theta(S_n-nx-y)}1_{\{S_n\geq nx+y\}}1_{\{\sum_{k\geq 1}e^{\theta b_\ell(k)}>e^{c\ell}\}})+\E(e^{-\theta(S_n-nx-y)}1_{\{S_n\geq nx+y\}}1_{\{S_{\ell}< (x-\vep)\ell\}}),
\end{split}
\]
where $c$ is a positive constant whose value will be fixed later.

Next, we prove that as $n\to\infty,$
\begin{equation}\label{eqR}
\sqrt{n}\sup_{|y|\leq a_n}\sum_{\ell\geq c_n}\E(e^{-\theta(S_n-nx-y)}1_{\{S_n\geq nx+y\}};S_{\ell}< (x-\vep)\ell)\to 0.
\end{equation}
Recall that $c_n/n\to0 $, so $c_n<n/2$ for $n$ large enough.
We first consider $\ell\in[c_n,\frac{n}{2}]$. By (\ref{upper}), for $y\in\R$ and $n$ large enough, we have
\begin{equation}\label{eq1}
\begin{split}
&\E(e^{-\theta(S_n-nx-y)}1_{\{S_n\geq nx+y\}}1_{\{S_{\ell}< (x-\vep)\ell\}})\\
=&\E(1_{\{S_{\ell}<(x-\vep)\ell\}}\E(e^{-\theta(S_{n-\ell}+s-y-nx)}1_{\{S_{n-\ell}+s\geq nx+y\}})|_{s=S_{\ell}})\\
\leq &C\frac{1}{\sqrt {n-\ell}}\bP(S_{\ell}<(x-\vep)\ell).
\end{split}
\end{equation}
According to the inequality (1.56) in \cite{nagaev}, setting $Y_i:=x-b_i(w^{(i)})$, we have
\[
\bP(S_{\ell}< (x-\vep)\ell)=\bP\left(\sum_{i=1}^{\ell}Y_i> \vep \ell\right)\leq \ell \bP\left(Y_1\geq \frac{1}{2}\vep \ell\right)+(2e\sigma^2)^2\vep^{-4}\ell^{-2}.
\]
As $\E Y_1^2$ is finite, we have $\sum_{\ell\geq 1}\ell \bP(Y_1\geq \frac{1}{2}\vep \ell)<\infty$. Thus, we conclude that
\[\lim_{n\to\infty}\sqrt{n}\sup_{|y|\leq a_n}\sum_{\ell\in[c_n,\frac{n}{2}]}\E(e^{-\theta( S_n-nx-y)}1_{\{S_n\geq nx+y\}}1_{\{S_{\ell}<(x-\vep)\ell\}})=0.\]
Next we consider $\ell \in [\frac{n}{2}, n]$. Because
\[\{S_n\geq nx+y,S_\ell<(x-\vep)\ell\}\subset\{S_n\geq nx+y,S_n-S_\ell>(n-\ell)x+y+\vep\ell\},\]
we have
\[\E(e^{-\theta(S_n-nx-y)}1_{\{S_n\geq nx+y\}}1_{\{S_{\ell}< (x-\vep)\ell\}})\leq \E(e^{-\theta(S_n-nx-y)}1_{\{S_n\geq nx+y\}}1_{\{S_n-S_\ell>(n-\ell)x+y+\vep\ell\}}).\]
By the independence between $S_n-S_\ell$ and $S_\ell$ and that $S_n-S_\ell$ is distributed as $S_{n-\ell}$, we have
\[
\begin{split}
&\E(e^{-\theta(S_n-nx-y)}1_{\{S_n\geq nx+y\}}1_{\{S_n-S_\ell>(n-\ell)x+y+\vep\ell\}})\\
=&\E(1_{\{S_{n-\ell}>(n-\ell)x+y+\vep\ell\}}\E(e^{-\theta(s+S_\ell-nx-y)}1_{\{s+S_{\ell}\geq nx+y\}})|_{s=S_{n-\ell}}),
\end{split}
\]
which, with the same arguments as in (\ref{eq1}), yields that for $n$ large enough, any $|y|\leq a_n$ and $\ell\in[\frac{n}{2},n]$, we have
\[
\begin{split}
&\E(1_{\{S_{n-\ell}>(n-\ell)x+y+\vep\ell\}}\E(e^{-\theta(s+S_\ell-nx-y)}1_{\{s+S_{\ell}\geq nx+y\}})|_{s=S_{n-\ell}})\\
\leq& \frac{1}{\sqrt{\ell}}C\bP(S_{n-\ell}>(n-\ell)x+y+\vep\ell)\\
\leq &\frac{1}{\sqrt{\ell}}C\left[\ell\bP\left(S_1-x\geq \frac{1}{2}\vep \ell\right)+\left(\frac{2(n-\ell)\sigma^2}{(\vep\ell+y)\vep\ell}\right)^{\frac{2(\vep\ell+y) }{\vep\ell}}e^{\frac{2(\vep\ell+y ) }{\vep\ell}}\right].
\end{split}
\]
Since that $n-\ell\leq\ell$ and $a_n=O(\sqrt{n})$,
for $n$ large enough,
\[
\sup_{|y|\leq a_n}\left\{\ell\bP\left(S_1-x\geq \frac{1}{2}\vep \ell\right)+\left(\frac{2(n-\ell)\sigma^2}{(\vep\ell+y)\vep\ell}\right)^{\frac{2(\vep\ell+y) }{\vep\ell}}e^{\frac{2(\vep\ell+y ) }{\vep\ell}}\right\}
\leq \ell\bP\left(S_1-x\geq \frac{1}{2}\vep\ell\right)+C\frac{1}{\ell^2}.
\]
Using that $\E(S_1^2)<\infty$ and $\ell\geq \frac{n}{2}$, we obtain
\[\lim_{n\to\infty}\sqrt{n}\sup_{|y|\leq a_n}\sum_{\ell\in[\frac{n}{2},n]}\E(e^{-\theta (S_n-nx-y)}1_{\{S_n\geq nx+y\}}1_{\{S_{\ell}< (x-\vep)\ell\}})=0.\]
With the same arguments as in the case of $\ell\in[\frac{n}{2},n]$, there exists $C>0$ such that for any $\ell>n$,
\[
\begin{split}
&\E(e^{-\theta(S_n-nx-y)}1_{\{S_n\geq nx+y\}}1_{\{S_\ell<(x-\vep)\ell\}})\\
\leq &\E(e^{-\theta(S_n-nx-y)}1_{\{S_n\geq nx+y\}}1_{\{S_\ell-S_n<x(\ell-n)-y-\vep \ell\}})\\
\leq &\frac{C}{\sqrt{n}}\left[\ell\bP\left(x-S_1\geq \frac{1}{2}\vep \ell\right)+\left(\frac{2(\ell-n)\sigma^2}{(\vep\ell+y)\vep\ell}\right)^{\frac{2(\vep\ell+y) }{\vep\ell}}e^{\frac{2(\vep\ell+y ) }{\vep\ell}}\right].
\end{split}
\]
Thus, we obtain that
\[\lim_{n\to\infty}\sqrt{n}\sup_{|y|\leq a_n}\sum_{\ell> n}\E(e^{-\theta (S_n-nx-y)}1_{\{S_n\geq nx+y\}}1_{\{S_{\ell}< (x-\vep)\ell\}})=0,\]
which, combined with results above, implies (\ref{eqR}).

Similarly to the arguments as in (\ref{eq1}), for $n$ large enough and $|y|\leq a_n$,
\[
\begin{split}
&\E(e^{-\theta(S_n-nx-y)}1_{\{S_n\geq nx+y\}}1_{\{\sum_{k\geq 1}e^{\theta b_\ell(k)}>e^{c\ell}\}})\\
=&\E(1_{\{\sum_{k\geq 1}e^{\theta b_1(k)}>e^{c\ell}\}}\E(e^{-\theta(S_{n-1}+s-nx-y)}1_{\{S_{n-1}+s\geq nx+y\}})|_{s=S_1})\\
\leq &C\frac{1}{\sqrt{n}}\bP\left(\log_+\left(\sum_{k\geq 1}e^{\theta b_1(k)}\right)>c\ell\right)
\end{split}
\]
By assumption (\ref{as3}) and the definition of $(b_1(k))_{k\geq 1}$, we know that
\[
\E\left(\log_+\left(\sum_{k\geq 1}e^{\theta b_1(k)}\right)\right)=\E\left(\sum_{|u|=1}e^{\theta V(u)-\psi(\theta)}\log_+\left(\sum_{|u|=1}e^{\theta V(u)}\right)\right)< \infty,
\]
which yields that
\[\lim_{n\to\infty}\sqrt{n}\sup_{|y|\leq a_n}\sum_{\ell\geq c_n}\bP(e^{-\theta(S_n-nx-y)}1_{\{S_n\geq nx+y\}}1_{\{\sum_{k\geq 1}e^{\theta b_\ell(k)}>e^{c\ell}\}})=0.\]

Finally, as $S_n-b_\ell(w^{(\ell)})$ is distributed as $S_{n-1}$ and is independent of $(b_\ell(k))_{k\geq 1}$, we obtain
\[
\begin{split}
&\E(e^{-\theta(S_n-nx-y)}1_{\{S_n\geq nx+y\}}1_{\{\max_{k\geq 1}M^{(k,\ell)}> (x-2\vep)\ell, \sum_{k\geq 1}e^{\theta b_\ell(k)}\leq e^{c\ell}\}})\\
=&\E(1_{\{\max_{k\geq 1}M^{(k,\ell)}> (x-2\vep)\ell, \sum_{k\geq 1}e^{\theta b_\ell(k)}\leq e^{c\ell}\}}\E(e^{-\theta(S_{n-1}+s-nx-y)}1_{\{S_{n-1}+s\geq nx+y\}})|_{s=b_\ell( w^{(\ell)})}).
\end{split}
\]
With the same arguments as in (\ref{eq1}), we have
\[
\begin{split}
&\E(1_{\{\max_{k\geq 1}M^{(k,\ell)}> (x-2\vep)\ell, \sum_{k\geq 1}e^{\theta b_\ell(k)}\leq e^{c\ell}\}}\E(e^{-\theta(S_{n-1}+s-nx-y)}1_{\{S_{n-1}+s\geq nx+y\}})|_{s=b_\ell(w^{(\ell)})})\\
\leq &\frac{C}{\sqrt n}\E\left(\sum_{k\geq 1 }e^{\theta M^{(k,\ell)}-\theta (x-2\vep)\ell}1_{\{\sum_{k\geq 1}e^{\theta b_\ell(k)}\leq e^{c\ell}\}}\right)\\
\leq &\frac{C}{\sqrt n}\E\left(\sum_{|u|=\ell-1}e^{\theta V(u)-\theta(x-2\vep)\ell}\right)e^{c\ell}\\
=&\frac{C}{\sqrt{n}}e^{(c+2\vep\theta)\ell}e^{-\ell(\theta x-\psi(\theta))},
\end{split}
\]
where the last equality follows form many-to-one formula, and $c$ and $\vep$ are chosen to satisfy $c+2\vep\theta<\theta x-\psi(\theta)$.
As $n\to\infty$, we get
\[\sqrt{n}\sup_{|y|\leq a_n}\sum_{\ell\geq c_n}\E(e^{-\theta(S_n-nx-y)}1_{\{S_n\geq nx+y\}}1_{\{\max_{k\geq 1}M^{(k,\ell)}> (x-2\vep)\ell, \sum_{k\geq 1}e^{\theta b_\ell(k)}\leq e^{c\ell}\}})\to 0.\]
Thus, we conclude that as $n\to\infty$,
\[\sqrt{n}\sup_{|y|\leq a_n}\E(e^{-\theta(S_n-n\psi'(\theta)-y)}1_{\{S_n\geq n\psi'(\theta)+y\}}1_{A_{c_n,\vep}^c})\to 0.\qedhere\]
\end{proof}
Let
\begin{equation}\label{constantC}
C(\theta):=\E(\frac{1}{D_\infty^\theta(\{0\})}1_{\{D_\infty^\theta((0,\infty))=0\}})
\end{equation}
and we bound its value in the following two lemmas.
\begin{lemma}\label{lem1}
Let $\theta>0$ such that (\ref{as1}) and (\ref{as3}) hold. Then $C(\theta)\in(0,1]$.
\end{lemma}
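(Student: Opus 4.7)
The upper bound $C(\theta) \leq 1$ will follow immediately from the construction~(\ref{d1}) of $D_\infty^\theta$: since the point process contains the atom $\delta_0$, we have $D_\infty^\theta(\{0\}) \geq 1$ almost surely, so the integrand $\tfrac{1}{D_\infty^\theta(\{0\})}\mathbf{1}_{\{D_\infty^\theta((0,\infty))=0\}}$ is bounded above by $1$.

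For the lower bound $C(\theta) > 0$, the plan is first to observe that by Lemma~\ref{well plus}, $D_\infty^\theta$ is a Radon measure, so $D_\infty^\theta(\{0\}) \leq D_\infty^\theta((-1,\infty)) < \infty$ almost surely; the integrand is therefore strictly positive on $\{D_\infty^\theta((0,\infty))=0\}$, reducing the task to proving this event has positive probability. To exhibit an explicit positive-probability subset, I introduce the events
\[\tilde{C}_\ell := \left\{\max_{k \neq w^{(\ell)}} M^{(k,\ell)} \leq S_\ell\right\}, \qquad \ell \geq 1.\]
On $\bigcap_\ell \tilde{C}_\ell$ every atom of $D_\infty^\theta$ beyond the initial $\delta_0$ is non-positive, so $D_\infty^\theta((0,\infty)) = 0$. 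Crucially, the events $(\tilde{C}_\ell)_\ell$ involve pairwise disjoint families of random variables ($b_\ell(\cdot)$, $w^{(\ell)}$ and $V^{(\cdot,\ell)}$ for distinct $\ell$), hence are mutually independent.

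The tail is then handled by Lemma~\ref{well}: for $\vep \in (0, \tfrac{1}{2}(\psi'(\theta) - \psi(\theta)/\theta))$, one has $A_{n,\vep} \subset \bigcap_{\ell \geq n}\tilde{C}_\ell$ (since $\max_{k \neq w^{(\ell)}} M^{(k,\ell)} \leq \max_k M^{(k,\ell)} < S_\ell - \vep\ell \leq S_\ell$), so $\bP\bigl(\bigcap_{\ell \geq n}\tilde{C}_\ell\bigr) \to 1$ as $n \to \infty$. Choosing $n_0$ with $\bP(A_{n_0,\vep}) \geq \tfrac{1}{2}$ and using independence, the proof reduces to showing $\bP(\tilde{C}_\ell) > 0$ for each $\ell = 1, \ldots, n_0 - 1$, which is the main obstacle. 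For $\ell = 1$, $\tilde{C}_1$ states that the spine particle $\xi_1$ is an argmax (not necessarily strict) of $\hat{\mathcal{L}}$, which holds with positive probability because $\xi_1$ is chosen proportionally to $e^{\theta V(k)}$, which is maximized at the argmax. For $\ell \geq 2$, I would condition on the $\ell$-th level data $(\hat{\mathcal{L}}_\ell, w^{(\ell)}, (V^{(k,\ell)})_k)$ and rewrite $\tilde{C}_\ell$ as the event $\{S_{\ell-1} \geq T\}$ for a random threshold $T$ depending only on those data; independence of $S_{\ell-1}$ from $T$, combined with the non-degeneracy of the displacement distribution implied by~(\ref{as1}) (since $\theta\psi'(\theta) > \psi(\theta)$ rules out the degenerate case of all displacements being almost surely equal, forcing the essential infimum of $T$ to lie strictly below the essential supremum of $S_{\ell-1}$), then yields the required positivity.
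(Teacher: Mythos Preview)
Your upper bound is correct and matches the paper. The lower bound argument, however, contains a genuine gap: the events $(\tilde{C}_\ell)_{\ell\geq 1}$ are \emph{not} mutually independent. By definition $S_\ell=\sum_{j=1}^{\ell}b_j(w^{(j)})$, so $\tilde{C}_\ell=\{\max_{k\neq w^{(\ell)}}M^{(k,\ell)}\leq S_\ell\}$ depends on the whole spine history $b_1(w^{(1)}),\ldots,b_\ell(w^{(\ell)})$, not only on the level-$\ell$ data you list. In particular $\tilde{C}_\ell$ and $\tilde{C}_{\ell'}$ with $\ell'<\ell$ share the common randomness $b_1(w^{(1)}),\ldots,b_{\ell'}(w^{(\ell')})$, so the factorisation $\bP\bigl(\bigcap_{\ell<n_0}\tilde{C}_\ell\bigr)=\prod_{\ell<n_0}\bP(\tilde{C}_\ell)$ is invalid, and proving $\bP(\tilde{C}_\ell)>0$ for each $\ell$ separately does not yield the conclusion. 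The same dependence also prevents you from decoupling $\bigcap_{\ell<n_0}\tilde{C}_\ell$ from $A_{n_0,\vep}$ (or from $\bigcap_{\ell\geq n_0}\tilde{C}_\ell$), since $A_{n_0,\vep}$ involves $S_\ell$ for $\ell\geq n_0$, hence again $b_j(w^{(j)})$ for $j<n_0$.

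Your final step, comparing $\mathrm{ess\,inf}\,T$ with $\mathrm{ess\,sup}\,S_{\ell-1}$, is also not justified as written: the threshold $T$ involves the maximum at time $\ell-1$ of an independent copy of the branching random walk, and nothing in~(\ref{as1}) alone forces its essential infimum to sit below the essential supremum of a sum of $\ell-1$ spine increments. Non-degeneracy of a single step is not enough to control this comparison.

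The paper avoids both difficulties by not attempting to factor across levels. It fixes $N$ with $\bP(A_{N+1,\vep})>0$, observes that on $A_{N+1,\vep}$ one has $D_\infty^\theta((0,\infty))=D_N^\theta((0,\infty))$ and $D_\infty^\theta(\{0\})=D_N^\theta(\{0\})$, and then applies the spine-reversal identity of Lemma~\ref{lemD} to rewrite
\[
\E\left(\frac{1}{D_N^\theta(\{0\})}1_{\{D_N^\theta((0,\infty))=0\}}f_N(S_N)\right)
= e^{N(\theta\psi'(\theta)-\psi(\theta))}\E\bigl(e^{\theta M_N-N\psi(\theta)}f_N(M_N)1_{\{Z_N(\R)>0\}}\bigr),
\]
where $f_N(s)$ is the monotone conditional probability of $A_{N+1,\vep}$ given $S_N=s$. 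Positivity is then obtained by a support argument for $S_N$ (hence $M_N$, via the many-to-one formula) relative to the threshold $\gamma=\sup\{s:f_N(s)=0\}$. The key move you are missing is this use of Lemma~\ref{lemD} to transfer the question back to the original branching random walk at a fixed finite time, where no delicate independence across levels is needed.
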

\begin{proof}
Using that $D_\infty^\theta(\{0\})\geq 1$, it is straightforward to deduce that $C(\theta)\leq 1$. Next, we prove that $C(\theta)>0$.
Fix $\vep\in(0,\frac{1}{2}(\psi'(\theta)-\frac{\psi(\theta)}{\theta}))$ from Lemma \ref{well}.
Recall that $\bP(A_{n,\vep}^c)\to 0.$
Then there exists $N>0$ such that $\bP(A_{N+1,\vep})>0$.
 Define
\[f_N(s):=\bP\left(\max_{\ell\geq {N+1}}\frac{\max_{k\geq 1}\{b_\ell(k)+\max_{|u|=\ell-1}\{V^{(k,\ell)}(u)\}\}-(S_\ell-S_{N})-s}{\ell}<-\vep\right),\quad s\in\R.\]
Observe that
\[
\begin{split}
\E\left(\frac{1}{D_\infty^\theta(\{0\})}1_{\{D_\infty^\theta((0,\infty))=0\}}\right)&\geq \E\left(\frac{1}{D_\infty^\theta(\{0\})}1_{\{D_\infty^\theta((0,\infty))=0\}}1_{A_{N+1,\vep}}\right)\\
&=\E\left(\frac{1}{D_N^\theta(\{0\})}1_{\{D_N^\theta((0,\infty))=0\}}f_N(S_N)\right)\\
&=e^{N(\theta\psi'(\theta)-\psi(\theta))}\E(e^{\theta M_N-N\psi(\theta)}f_N(M_N)1_{\{Z_N(\R)>0\}}),
\end{split}
\]
where the last equation follows from Lemma \ref{lemD}.
Thus, in order to prove Lemma \ref{lem1}, it is sufficient to prove
\[\E(f_N(M_N)1_{\{Z_N(\R)>0\}})>0.\]
It is straightforward to see that $f_N(s)$ is increasing with respect to $s$. Denote
\[\gamma:=\sup\{s\in\R:f_N(s)=0\},\]
with the usual convention $\sup\varnothing=-\infty$.\\
\textbf{Case 1: $\gamma=-\infty$}

Because $\gamma=-\infty$, we know that for any $s\in\R$, $f_N(s)>0$. Hence $f_N(M_N)>0$ on the event $\{Z_N(\R)>0\}$, which yields that
\[\E(f_N(M_N)1_{\{Z_N(\R)>0\}})>0.\]
\textbf{Case 2: $\gamma>-\infty$}

We assert that $\bP(S_N\geq\gamma)>0$. If $\bP(S_N\geq\gamma)=0$, then
$\bP(S_N<\gamma)=1$. We can find $\vep_1>0$ such that
\[\bP(A_{N+1,\vep},S_N<\gamma-\vep_1)>0.\]
However,
\[
\bP(A_{N+1,\vep},S_N<\gamma-\vep_1)=\E(f_N(S_N)1_{\{S_N<\gamma-\vep_1\}})\leq f_N(\gamma-\vep_1)=0,
\]
which yields a contradiction.
Thus, we obtain that $\bP(S_N\geq\gamma)>0$.

By many-to-one formula and the spine decomposition theorem, we know that
\[\E\left(\sum_{|u|=N}e^{\theta V(u)-N\psi(\theta)}1_{\{V(u)\geq \gamma\}}\right)=\bP(S_N\geq \gamma)>0,\]
which implies that
\[\bP(M_N\geq \gamma)>0.\]
If $f_N(\gamma)>0$, then we have
\[\E(f_N(M_N)1_{\{Z_N(\R)>0\}})\geq f_N(\gamma)\bP(M_n\geq \gamma)>0.\]
If $f_N(\gamma)=0$, as $\E(f_N(S_N))>0$, there exists $L>\gamma$ such that $\bP(S_N\geq L)>0$. Similarly to the arguments above, we get
\[\E(f_N(M_N)1_{\{Z_N(\R)>0\}})\geq f_N(L)\bP(M_N\geq L )>0,\]
which completes the proof.
\end{proof}
\begin{lemma}\label{constant}
Let $\theta>0$ such that (\ref{as1}) and (\ref{as3}) hold. Then $C(\theta)=1$ implies $\bP(Z_1(\R)\leq 1)=1$.
\end{lemma}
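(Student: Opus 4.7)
The plan is to exploit the explicit definition \eqref{d1} of $D_\infty^\theta$ to force the spine's offspring law $\hat{\mathcal{L}}$ to be essentially trivial, and then transport this constraint through the spine change-of-measure to $\mathcal{L}$.

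First I would observe that $D_\infty^\theta(\{0\}) \geq 1$ by construction, so the integrand in \eqref{constantC} is bounded above by $1$; hence $C(\theta) = 1$ forces the integrand to equal $1$ almost surely, giving
\[D_\infty^\theta(\{0\}) = 1 \quad \text{and} \quad D_\infty^\theta((0,\infty)) = 0 \qquad \text{almost surely.}\]
Equivalently, none of the atoms added by the double sum in \eqref{d1} lies in $[0,\infty)$ a.s.

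Next I would isolate the $k = 1$ contribution to \eqref{d1}. Since $|u| = 0$ forces $u = \varnothing$ with $V^{(i,1)}(\varnothing) = 0$, and since $S_1 = b_1(w^{(1)})$, this contribution reduces to $\sum_{i \neq w^{(1)}} \delta_{b_1(i) - b_1(w^{(1)})}$ (atoms at $-\infty$ being vacuous). The constraint of the previous paragraph then demands, almost surely, $b_1(i) < b_1(w^{(1)})$ for every $i \neq w^{(1)}$ with $b_1(i) > -\infty$.

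I would then argue that this forces $\hat{\mathcal{L}}$ to have at most one finite atom almost surely. Conditionally on $\hat{\mathcal{L}}$, the spine index $w^{(1)}$ is chosen among the finite atoms with probability proportional to $e^{\theta b_1(\cdot)}$. If $\hat{\mathcal{L}}$ carried at least two finite atoms with positive probability, then either its maximum is not unique, in which case any choice of $w^{(1)}$ leaves a co-maximal atom that contributes an extra $\delta_0$ and violates $D_\infty^\theta(\{0\}) = 1$; or the maximum is unique but the Boltzmann weighting selects a strictly smaller atom with strictly positive conditional probability, producing an atom of $D_\infty^\theta$ in $(0,\infty)$. Both outcomes contradict the preceding step, so $\bP(\hat{\mathcal{L}}(\R) \leq 1) = 1$.

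Finally, by the defining identity for $(\hat{\mathcal{L}},\xi)$,
\[\bP(\hat{\mathcal{L}}(\R) \geq 2) = \E\!\left( W_1^\theta\, 1_{\{Z_1(\R) \geq 2\}}\right),\]
and since $W_1^\theta = \sum_{|u|=1} e^{\theta V(u) - \psi(\theta)} > 0$ on $\{Z_1(\R) \geq 1\} \supset \{Z_1(\R) \geq 2\}$, the vanishing of the left-hand side yields $\bP(Z_1(\R) \geq 2) = 0$, i.e.\ $\bP(Z_1(\R) \leq 1) = 1$. The only delicate point is the bookkeeping in the third step, where ties in $\hat{\mathcal{L}}$ and possible dead atoms $b_1(i) = -\infty$ must be handled simultaneously; separating the tied-maximum and strict-maximum sub-cases as above makes both contradictions immediate.
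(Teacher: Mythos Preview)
Your proof is correct but proceeds differently from the paper's. After the common first step ($D_\infty^\theta(\{0\})=1$ and $D_\infty^\theta((0,\infty))=0$ a.s.), the paper observes that the same identities then hold for every $D_n^\theta$ by monotonicity, applies Lemma~\ref{lemD} with $F\equiv f\equiv 1$ to get $\bP(Z_n(\R)>0)=\E(e^{-\theta S_n+n\psi(\theta)})$, and recognizes the right-hand side as $\E(Z_n(\R))$ via the many-to-one formula; taking $n=1$ yields $\bP(Z_1(\R)>0)=\E(Z_1(\R))$ and hence $Z_1(\R)\le 1$ a.s. Your argument instead isolates the $k=1$ layer of the spine construction and shows by a direct case analysis (tied maxima versus unique maximum) that $\hat{\mathcal L}$ must carry at most one finite atom, then undoes the size-bias. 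The paper's route is shorter because it cashes in the already-proved time-reversal identity of Lemma~\ref{lemD}; your route is more self-contained, requiring only the first-generation spine law and no appeal to Lemma~\ref{lemD} or the random-walk representation of $S_n$.
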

\begin{proof}
 Because $C(\theta)=1$, we have
\[\E\left(\frac{1}{D_\infty^\theta(\{0\})}1_{\{D_\infty^\theta((0,\infty))=0\}}\right)=1,\]
which, as $D_\infty^\theta(\{0\})\geq 1$, implies that
\[D_\infty^\theta(\{0\})=1\quad\text{and}\quad D_\infty^\theta((0,\infty))=0,\quad \text{a.s.}\]
Note that for any $n\geq 0$, $0\leq D_n^\theta((0,\infty))\leq D_\infty^\theta((0,\infty))$ and $1\leq D_n^\theta(\{0\})\leq D_\infty^\theta(\{0\})$, we obtain that
\[D_n^\theta(\{0\})=1\quad\text{and}\quad D_n^\theta((0,\infty))=0,\quad\text{a.s.},\]
which, by Lemma \ref{lemD}, yields that for any $n\geq 0$,
\[\bP(Z_n(\R)>0)=\E(e^{-\theta S_n+n\psi(\theta)}).\]
By the definition of $S_n$, we have
\[\bP(Z_n(\R)>0)=\E(Z_n(\R)).\]
In particular, taking $n=1$, we get $\bP(Z_1(\R)>0)=\E(Z_1(\R))$, which implies that $\bP(Z_1(\R)\leq 1)=1$.
\end{proof}
Finally, we prove Proposition \ref{main}.
\begin{proof}[Proof of Proposition \ref{main}]
For any non-negative $\phi\in C_c(\R)$ and $|y|\leq a_n$, by Lemma \ref{lemD}, we have
\[
\begin{split}
&\sqrt{n}e^{\theta y}e^{n(\theta\psi'(\theta)-\psi(\theta))}\E(e^{-\mathcal{E}_n(\phi)}1_{\{M_n-n\psi'(\theta)\geq y\}})\\
=&\sqrt{n}\E\left[\frac{1}{D^\theta_n(\{0\})}e^{-\theta (S_n-n \psi'(\theta)-y)}e^{-D^\theta_n(\phi)}1_{\{S_n-\psi'(\theta) n\geq y\}}1_{\{D^\theta_n((0,\infty))=0\}}\right]\\
=&\sqrt{n}\E(h_n^\theta(\phi)e^{-\theta (S_n-n \psi'(\theta)-y)}1_{\{S_n-\psi'(\theta) n\geq y\}})
\end{split}
\]
where $h_n^\theta(\phi)$ is defined in (\ref{h}) and we define
\[h_\infty^\theta(\phi)=\frac{1}{D_\infty^\theta(\{0\})}e^{-D_\infty^\theta(\phi)}1_{\{D_\infty^\theta((0,\infty))=0\}}.\]
By the decomposition of (\ref{split}) and the estimations of Lemma \ref{part1} and \ref{part2}, there exists $\vep>0$ such that as $n\to\infty$,
\[
\begin{split}
&\sup_{|y|\leq a_n}\left|\sqrt{n}e^{\frac{y^2}{2\sigma^2n }}\E(h_n^\theta(\phi)e^{-\theta (S_n-n \psi'(\theta)-y)}1_{\{S_n-\psi'(\theta) n\geq y\}})-\frac{1}{\sqrt{2\pi}\theta\sigma}\E(h_\infty^\theta(\phi))\right|\\
\leq &\sup_{|y|\leq a_n}\left|\sqrt{n}e^{\frac{y^2}{2\sigma^2n }}e^{\theta y}I_1(n,y)-\frac{1}{\sqrt{2\pi}\theta\sigma}\E(h_\infty^\theta(\phi))\right|+\sup_{|y|\leq a_n} \sqrt{n}e^{\frac{y^2}{2\sigma^2n }}e^{\theta y}|I_2(n,y,\vep)+I_3(n,y,\vep)|\\
\to& 0,
\end{split}
\]
which completes the proof.
\end{proof}
\section{Proof of main results}\label{p1th23}
We first prove Theorems \ref{th2} and \ref{th1} in Section \ref{th23}. We then turn to the proof of Proposition \ref{log}, which follows form similar methods as the ones used by Biggins in \cite[Theorem 2]{biggins1}.
\subsection{Proof of Theorems \ref{th2} and \ref{th1}}\label{th23}
Recall that $\mathcal{E}_n=\sum_{|u|=n}\delta_{V(u)-M_n}$. We are now ready to give the proof of the precise upper deviation estimates of $M_n$ and the weak convergence of $(\mathcal{E}_n,M_n-\psi'(\theta)n)$ conditionally on $\{M_n\geq n\psi'(\theta)\}$.

\begin{proof}[Proof of Theorem \ref{th2}]
According to many-to-one formula and the spine decomposition theorem, we know that
\[
\begin{split}
\E(Z_n([n\psi'(\theta),\infty)))=&\E\left(\sum_{|x|=n}1_{\{V(x)\geq n\psi'(\theta)\}}\right)\\
=&e^{n\psi(\theta)-n\theta \psi'(\theta)}\E(e^{-\theta S_n+n\theta \psi'(\theta)}1_{\{S_n\geq n\psi'(\theta)\}})
\end{split}
\]
By Lemma \ref{rw}, we conclude that as $n\to\infty$,
\begin{equation}\label{number}
\sqrt{2\pi}\sigma\sqrt{n}e^{n(\theta \psi'(\theta)-\psi(\theta))}\E(Z_n([n\psi'(\theta),\infty)))\to \int_\R1_{\{x\geq 0\}}e^{-\theta x}\mathrm{d}x=\frac{1}{\theta }.
\end{equation}
Take $\phi\equiv0$ in Proposition \ref{main}, by Lemmas \ref{lem1} and \ref{constant}, we obtain
\[
\begin{split}
\sqrt{2\pi}\theta \sigma \sqrt{n}e^{\frac{y^2}{2\sigma^2 n}}e^{\theta y}e^{n(\theta\psi'(\theta)-\psi(\theta))}\bP(M_n\geq n\psi'(\theta)+y)&\to\E\left(\frac{1}{D_\infty^\theta(\{0\})}1_{\{D_\infty^\theta((0,\infty))=0\}}\right)\\
&=C(\theta)\in(0,1)
\end{split}
\]
uniformly in $|y|\leq a_n$, which, combined with (\ref{number}), yields that
\[\lim_{n\to\infty}\frac{\bP(M_n\geq n\psi'(\theta))}{\E(Z_n([n\psi'(\theta),\infty)))}=C(\theta).\qedhere\]
\end{proof}
The results showed in Theorem \ref{th2} is helpful for the proof of Theorem \ref{th1}.
\begin{proof}[Proof of Theorem \ref{th1}]
It is sufficient to prove that for any non-negative function $\phi\in C_c(\R)$ and $y\geq 0$,
\[\E(e^{-\mathcal{E}_n(\phi)}1_{\{M_n-n\psi'(\theta)\geq y\}}|M_n\geq n\psi'(\theta))\to e^{-\theta y}\frac{\E(\frac{1}{D_\infty^\theta(\{0\})}e^{-D_\infty^\theta(\phi)}1_{\{D_\infty^\theta((0,\infty))=0\}})}{\E(\frac{1}{D_\infty^\theta(\{0\})}1_{\{D_\infty^\theta((0,\infty))=0\}})}.\]
By Proposition \ref{main}, for any non-negative $\phi\in C_c(\R)$ and $y\geq 0$, as $n\to\infty$, we have
\[\sqrt{2\pi}\theta \sigma \sqrt{n}e^{n(\theta\psi'(\theta)-\psi(\theta))}\E (e^{-\mathcal{E}_n(\phi)}1_{\{M_n- n\psi'(\theta)\geq y\}})\to e^{-\theta y}\E(\frac{1}{D_\infty^\theta(\{0\})}e^{-D_\infty^\theta(\phi)}1_{\{D_\infty^\theta((0,\infty))=0\}}).\]
Therefore, using Lemma \ref{lem1} and Theorem \ref{th2}, we obtain that as $n\to\infty$,
\[
\begin{split}
\E(e^{-\mathcal{E}_n(\phi)}1_{\{M_n-n\psi'(\theta)\geq y\}}|M_n\geq n\psi'(\theta))
&=\frac{\E(e^{-\mathcal{E}_n(\phi)}1_{\{M_n-n\psi'(\theta)\geq y\}})}{\bP(M_n\geq n\psi'(\theta))}\\
&\to e^{-\theta y}\frac{\E(\frac{1}{D_\infty^\theta(\{0\})}e^{-D_\infty^\theta(\phi)}1_{\{D_\infty^\theta((0,\infty))=0\}})}{\E(\frac{1}{D_\infty^\theta(\{0\})}1_{\{D_\infty^\theta((0,\infty))=0\}})},
\end{split}
\]
which completes the proof.
\end{proof}
\subsection{Proof of Proposition \ref{log}}\label{p1}
To prove Proposition \ref{log}, we need to estimate the survival probability of a subcritical Galton-Watson process. A Galton-Watson process $(Y_n)_{n\geq 0}$ is defined as follows: we set $Y_0:=1$ and for $n\geq 0$
\[Y_{n+1}:=\sum_{k=1}^{Y_n}\xi_{k,n},\]
where $(\xi_{k,n})_{k,n\geq 0}$ are i.i.d nonnegative integer-valued random variables.
\begin{lemma}\label{lemGW}
Consider a Galton-Watson process $(Y_n)_{n\geq0}$ defined as above. Assume that $m:=\E(Y_1)\in(0,1),$ we have
\[\lim_{n\to\infty}\frac{1}{n}\log \bP(Y_n>0)=\log m.\]
\end{lemma}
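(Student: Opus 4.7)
The plan is to prove the two bounds separately. The upper bound $\limsup_{n\to\infty} \frac{1}{n}\log \bP(Y_n>0) \leq \log m$ is immediate from Markov's inequality: since $Y_n$ is integer-valued, $\bP(Y_n>0) = \bP(Y_n\geq 1) \leq \E(Y_n) = m^n$, giving $\frac{1}{n}\log \bP(Y_n>0) \leq \log m$ for every $n\geq 1$.

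The substantive direction is the lower bound. I would set $p_n := \bP(Y_n>0)$ and $q_n := 1-p_n = \bP(Y_n=0)$, and use the generating function $f(s):=\E(s^{Y_1})$ for $s\in[0,1]$. The branching property yields the classical recursion $q_{n+1}=f(q_n)$, equivalently
\[
p_{n+1} = 1 - f(1-p_n).
\]
Since $f$ is convex on $[0,1]$ with $f(1)=1$ and $f'(1^-)=\E(Y_1)=m<1$, the only fixed point of $f$ in $[0,1]$ is $1$, so $q_n\to 1$ and $p_n\to 0$. Moreover, the left derivative of $f$ at $1$ being $m$ gives
\[
\lim_{x\to 0^+} \frac{1-f(1-x)}{x} = m.
\]

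Combining these two observations, $p_{n+1}/p_n \to m$ as $n\to\infty$. Writing $\log p_n = \sum_{k=1}^{n}\log(p_k/p_{k-1}) + \log p_0$ and applying Cesàro's lemma (noting $p_0=1$), we obtain
\[
\lim_{n\to\infty}\frac{1}{n}\log p_n = \log m,
\]
which completes the lower bound and hence the proof. There is no real obstacle here; the only subtlety is ensuring $p_n>0$ for every $n$ (so the logarithm is defined), which follows from the standing assumption $m>0$: if $p_n=0$ then $\bP(Y_n=0)=1$, forcing $m^n=\E(Y_n)=0$, a contradiction.
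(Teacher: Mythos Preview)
Your proof is correct and more elementary than the paper's. Both arguments handle the upper bound identically via Markov's inequality. For the lower bound, the paper proceeds by truncation: it replaces the offspring law by $Y_1^{(L)}:=Y_1 1_{\{Y_1\le L\}}$, so that $\E(Y_1^{(L)}\log_+ Y_1^{(L)})<\infty$ trivially, invokes the classical result (Athreya--Ney) that under $L\log L$ one has $\bP(Y_n^{(L)}>0)\sim c\,(m^{(L)})^n$, and then lets $L\to\infty$ using $m^{(L)}\uparrow m$ and $Y_n^{(L)}\le Y_n$.

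Your route is a direct generating-function computation: from $p_{n+1}=1-f(1-p_n)$ and $f'(1^-)=m$ you get $p_{n+1}/p_n\to m$, and Ces\`aro then gives the full limit. This avoids any appeal to the $L\log L$ theory and any truncation step; it is entirely self-contained and in fact yields the two-sided limit in one stroke rather than just the lower bound. The paper's approach has the advantage of being modular---it reduces to a named result---but yours is shorter and requires nothing beyond convexity of $f$ and basic analysis.
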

\begin{proof}
For $L>0$, we construct a new Galton-Watson process $(Y^{(L)}_n)_{n\geq 0}$ by removing the offspring of each particle which produces more than $L$ children. More precisely, $(Y^{(L)}_n)_{n\geq 0}$ is defined as follows: we set $Y_0^{(L)}:=1$ and
\[Y^{(L)}_{n+1}:=\sum_{k=1}^{Y^{(L)}_n}\xi_{k,n}1_{\{\xi_{k,n}\leq L\}},\quad n\geq 1.\]
By induction, we get $Y^{(L)}_n\leq Y_n$ for any $n\geq 0$.
Let $m^{(L)}:=\E(Y^{(L)}_1)=\E(Y_1 1_{\{Y_1\leq L\}})$. By monotone convergence theorem, we have
\[\lim_{L\to\infty} m^{(L)}=m\in(0,1).\]
As $Y^{(L)}_1=Y_1 1_{\{Y_1\leq L\}}\leq L$, we have $\E(Y^{(L)}_1\log_+(Y^{(L)}_1))<\infty$. According to \cite[P40]{athreya}, as $L$ large enough, we have
\[\lim_{n\to\infty}\frac{1}{n}\log\bP(Y^{(L)}_n>0)=\log m^{(L)}.\]
Therefore
\begin{equation}\label{lowerbound}
\liminf_{n\to\infty}\frac{1}{n}\log \bP(Y_n>0)\geq \lim_{L\to\infty}\lim_{n\to\infty}\frac{1}{n}\log\bP(Y^{(L)}_n>0)=\log m.
\end{equation}
On the other hand, by the Markov inequality, we have
\[ \bP(Y_n>0)\leq \E(Y_n).\]
Note that $\E(Y_n)=m^n$. Thus, letting $n\to \infty$, we get
\[\limsup_{n\to\infty}\frac{1}{n}\log\bP(Y_n>0)\leq \log m,\]
which, combined with (\ref{lowerbound}), finishes the proof.
\end{proof}

With Lemma \ref{lemGW}, we can prove Proposition \ref{log}. Here we mainly use the strategy showed in \cite[Theorem 2]{biggins1} to construct a truncated branching random walk that controls the lower bound of the probability $\bP(M_n\geq nx)$. For $u,v\in \mathbb{T}$, we denote by $uv:=(u^{(1)},\ldots,u^{(|u|)},v^{(1)},\ldots,v^{(|v|)})$ the concatenation of $u$ and $v$.
\begin{proof}[Proof of Proposition \ref{log}]
For the upper bound, observe that for $x\in\R$, we have $\{M_n\geq nx\}=\{Z_n([nx,\infty))\geq 1\}.$ By Markov inequality,
\[\bP(Z_n([nx,\infty))\geq 1)\leq \E(Z_n([nx,\infty)))=\E\left(\sum_{|u|=n}1_{\{V(u)\geq nx\}}\right).\]
Thus, for any $\theta\geq 0$, by many-to-one formula, we have
\[\E(Z_n([nx,\infty)))\leq\E\left(\sum_{|u|=n}e^{\theta V(u)-nx\theta}\right)= e^{-n(\theta x-\psi(\theta))}.\]
Optimizing the above equation with respect to $\theta$, it implies
\[\bP(M_n\geq nx)\leq e^{-n\psi^*(x)}.\]

 Next, we bound $\bP(M_n\geq nx)$ from below. If $\psi^*(x)=\infty$, then $\bP(M_n\geq nx)=0$ completes the proof. We therefore assume that $\psi^*(x)<\infty$. We know that $\psi^*(x)>0$ because $\psi^*$ is convex on $\R$ and $x>x^*$. Similarly to \cite[Theorem 2]{biggins1}, fix $k\in\N$, we construct a branching random walk $Z^{(k)}$ as follows. The first generation of $Z^{(k)}$ will consist only of those particles in the $k$th generation of the branching random walk $Z$ with positions to the right of $kx$. The second generation is formed by applying the same procedure to the branching random walks initiated by each of these particles and so on. More precisely, for $k\in\N$ and $i\geq 1$, we denote by
\[A_0^k:=\{\varnothing\}\text{ and }A_i^k:=\{uy\in \mathbb{T}:u\in A_{i-1}^k,|y|=k,V(uy)-V(u)\geq kx\}\]
the collections of particles in each generation of the new branching random walk $Z^{(k)}$, which yields that $Z_i^{(k)}(\R)=\#A_i^k$. It is obvious to see that $(Z_n^{(k)}(\R))_{n\geq 0}$ is a Galton-Watson process with $Z_0^{(k)}(\R)=1$ and
$\E(Z^{(k)}_1(\R))=\E(Z_k([kx,\infty))).$ By the independence of the branching events, we know that
\[
\begin{split}
\bP(M_n\geq nx)&\geq \bP(M_{n-k\lfloor\frac{n}{k}\rfloor}\geq (n-k\lfloor\frac{n}{k}\rfloor)x)\bP(Z^{(k)}_{\lfloor\frac{n}{k}\rfloor}(\R)>0)\\
&\geq \inf_{1\leq i\leq k }\bP(M_i\geq ix)\bP(Z^{(k)}_{\lfloor\frac{n}{k}\rfloor}(\R)>0),
\end{split}
\]
where $\lfloor x\rfloor$ means the integer part of $x$.
As $\psi(0)>0$, according to \cite[Theorem 1]{biggins1}, we have
\begin{equation}\label{cramer}
\lim_{n\to\infty}\frac{1}{n}\log \E(Z_n([nx,\infty)))=-\psi^*(x)\in(-\infty,0),
\end{equation}
which implies that for $k$ large enough, $\E(Z^{(k)}_1(\R))\in(0,1).$

By Lemma \ref{lemGW}, we get
\[\liminf_{n\to\infty}\frac{1}{n}\log\bP(M_n\geq nx)\geq \lim_{n\to\infty}\frac{\lfloor\frac{n}{k}\rfloor}{n}\frac{1}{\lfloor\frac{n}{k}\rfloor}\log\bP(Z^{(k)}_{\lfloor\frac{n}{k}\rfloor}(\R)>0)=\frac{1}{k}\log\E(Z_k([kx,\infty))),\]
where the first inequality follows from $\inf_{1\leq i\leq k}\bP(M_i\geq ix)>0$. By (\ref{cramer}), letting $k\to\infty$, we get
\[\liminf_{n\to\infty}\frac{1}{n}\log\bP(M_n\geq nx)\geq-\psi^*(x),\]
which completes the proof of lower bound in the case that $\psi(0)>0$.

On the other hand, if $\psi(0)\leq 0$, we can denote by $\mu$ the intensity measure of $Z_1$ defined by $\mu(A):=\E(Z_1(A))$ for any $A\in\mathscr{B}(\R)$. By many-to-one formula, we have
 \[\mu^{*n}(A)=\E(Z_n(A)),\]
where $\mu^{*n}$ means the $n$-fold convolution of $\mu$.
 As $\mu(\R)=\E(Z_1(\R))=e^{\psi(0)}<\infty$, $\mu/\mu(\R)$ is a probability measure on $\R$. By Cram\'er's theorem, we know that for any $x>\frac{1}{\mu(\R)}\int_{\R}s\mu(\mathrm{d}s)=\frac{\E(\sum_{|u|=1}V(u))}{\E(Z_1(\R))}$, we have
\[\lim_{n\to\infty}\frac{1}{n}\log\E(Z_n([nx,\infty)))=\lim_{n\to\infty}\frac{1}{n}\log \frac{\mu^{*n}([nx,\infty))}{\mu(\R)^n}+\log\mu(\R)=-\psi^*(x)\in(-\infty,0).\]
With the same arguments as in the case that $\psi(0)>0$, we have
\[\liminf_{n\to\infty}\frac{1}{n}\log \bP(M_n\geq nx)\geq -\psi^*(x),\quad\text{for any } x>\frac{\E(\sum_{|u|=1}V(u))}{\E(Z_1(\R))},\]
which completes the proof of lower bound.
\end{proof}

\section{Some extra properties of the extremal process of conditioned branching random walk}\label{finitenumber}
In Section \ref{critical}, we will discuss when $D(\theta)$ is finite. We give an alternative description of $D(\theta)$ in Section \ref{alternative}. Finally, we prove the weak continuity of $D(\theta)$ in Section \ref{limitprocess}.
\subsection{Subcritical branching random walk}\label{critical}
 Recall that we do not assume the branching random walk $Z$ to be supercritical. Under some extra conditions, we prove that $D(\theta)$ is finite (infinite) if $Z$ is subcritical (critical) and satisfies a $L\log L$ condition (second moment condition).
\begin{proposition}\label{subcritical}
Assume there exists $\theta>0$ such that $\psi(\theta)<\infty$ and $(\ref{as3})$. If $Z$ is subcritical and satisfies $\E(Z_1(\R)\log_+Z_1(\R))<\infty$, then $\bP(D_\infty^\theta(\R)<\infty)=1.$
\end{proposition}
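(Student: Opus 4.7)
My plan is to decompose $D_\infty^\theta(\R)=1+\sum_{k=1}^\infty Y_k$, where $Y_k:=\sum_{i\neq w^{(k)}}Z^{(i,k)}_{k-1}(\R)$ counts the particles born off the spine at generation $k$, and to show $\sum_k Y_k<\infty$ almost surely by a conditional Markov plus Borel--Cantelli argument. Writing $\mathcal{Y}$ for the $\sigma$-field of the spine sequence as in Lemma \ref{well}, the subtrees $(V^{(i,k)})_{i,k}$ are independent of $\mathcal{Y}$ and iid copies of the original branching random walk, so conditionally on $\mathcal{Y}$, $Y_k$ is a sum of $N_k-1$ iid copies of $Z_{k-1}(\R)$, where $N_k:=\hat{\mathcal{L}}^{(k)}(\R)$ is the number of children of the $k$-th spine particle. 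Writing $m:=\E(Z_1(\R))=e^{\psi(0)}\in(0,1)$ by subcriticality, this yields
\[\E(Y_k\mid\mathcal{Y})=(N_k-1)\,m^{k-1}.\]
Fix $c\in(m,1)$; the conditional Markov inequality with the (random) threshold $N_k c^{k-1}$ gives
\[\bP\bigl(Y_k>N_k c^{k-1}\,\big|\,\mathcal{Y}\bigr)\leq \frac{(N_k-1)m^{k-1}}{N_k c^{k-1}}\leq\left(\frac{m}{c}\right)^{k-1},\]
whose right-hand side is summable, so Borel--Cantelli yields $Y_k\leq N_k c^{k-1}$ for all but finitely many $k$, almost surely.

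It therefore suffices to prove $\sum_k N_k c^{k-1}<\infty$ almost surely, for which I will show $\E(\log_+N_1)<\infty$. By the size-biasing relation defining $\hat{\mathcal{L}}$,
\[\E(\log_+N_1)=\E\bigl(W_1^\theta\log_+Z_1(\R)\bigr)=e^{-\psi(\theta)}\,\E\bigl(S_\theta\log_+Z_1(\R)\bigr),\]
where $S_\theta:=\sum_{|u|=1}e^{\theta V(u)}$. A case analysis according to whether $Z_1(\R)\leq S_\theta$ or $Z_1(\R)>S_\theta$, using only monotonicity of $\log_+$, gives the pointwise bound
\[S_\theta\log_+Z_1(\R)\leq S_\theta\log_+S_\theta+Z_1(\R)\log_+Z_1(\R),\]
whose expectation is finite by assumption (\ref{as3}) and the hypothesis $\E(Z_1(\R)\log_+Z_1(\R))<\infty$.

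Finally, with $\E(\log_+N_1)<\infty$ and $(N_k)_{k\ge1}$ iid, the standard bound $\sum_n\bP(\log_+N_n>\delta n)\leq \delta^{-1}\E(\log_+N_1)<\infty$ together with Borel--Cantelli yields $\log_+N_k/k\to0$ almost surely, hence for any prescribed $r>1$ one has $N_k\leq r^k$ eventually. Choosing $r$ with $1<r<1/c$ makes $\sum_k N_k c^{k-1}$ absolutely convergent almost surely, and combining with the first step gives $D_\infty^\theta(\R)<\infty$ almost surely. The delicate point is the integrability step: one cannot directly control $\E(N_1)=\E(W_1^\theta Z_1(\R))$, since neither $\E(S_\theta^2)$ nor $\E(Z_1(\R)^2)$ is assumed finite, so a first-moment argument for $\E(D_\infty^\theta(\R))$ is not available; the $L\log L$ hypotheses on both $S_\theta$ and $Z_1(\R)$ are what make the case analysis above go through, and passing to $\log_+N_k$ is precisely what converts this weak integrability into an almost-sure bound strong enough to beat the geometric factor $c^{k-1}$.
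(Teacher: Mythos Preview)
Your proof is correct. The overall architecture is the same as the paper's: decompose $D_\infty^\theta(\R)$ along the spine as $1+\sum_k Y_k$ with $Y_k=\sum_{i\neq w^{(k)}}Z_{k-1}^{(i,k)}(\R)$, control the number $N_k$ of spine-offspring via the $L\log L$ inputs using the pointwise bound $S_\theta\log_+Z_1(\R)\le S_\theta\log_+S_\theta+Z_1(\R)\log_+Z_1(\R)$ (the paper states this as $x\log(1+y)\le x\log(1+x)+y\log(1+y)$), and finish with Borel--Cantelli.

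The genuine difference is in how you exploit subcriticality. The paper appeals to Lemma~\ref{lemGW} to get exponential decay of the \emph{survival probability} $\bP(Z_{k-1}(\R)>0)$, then splits on $\{N_k>e^{ck}\}$ to bound $\bP(Y_k>0)$ directly; Borel--Cantelli then yields the slightly stronger conclusion that $Y_k=0$ for all large $k$. You instead use only the elementary identity $\E(Z_{k-1}(\R))=m^{k-1}$ and a conditional Markov inequality with the random threshold $N_kc^{k-1}$. This is cleaner in that it bypasses Lemma~\ref{lemGW} entirely and needs no truncation of $N_k$ in the first step; the price is that you only get $Y_k\le N_kc^{k-1}$ eventually rather than $Y_k=0$. (Since $Y_k$ is integer-valued and $N_kc^{k-1}\to0$ a.s., your bound in fact recovers $Y_k=0$ eventually as well, though you do not need this.) Both routes use the $L\log L$ integrability in the same way to turn the a~priori uncontrolled $N_k$ into an at-most-subexponential sequence.
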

\begin{proof}
Because $Z$ is subcritical, then we know that $\psi(0)<0$, which implies $\E(Z_1(\R))\in(0,1)$. According to the definition of $Z_n$ in (\ref{Zn}), we know that $(Z_n(\R))_{n\geq 0}$ is a subcritical GW process with $Z_0(\R)=1$.  By Lemma \ref{lemGW}, as $n\to\infty$,
\[\frac{1}{n}\log\bP(Z_n(\R)>0)\to \log\E(Z_1(\R))\in(-\infty,0).\]
Recall the definitions of $(b_k(\ell):\ell\geq 1)_{k\geq 1}$ and $(V^{(i,k)}(u):u\in\mathbb{T})_{i\geq 1,k\geq 1}$. Let $Z^{(i,k)}$ be the number of particles of the branching random walk $(V^{(i,k)}(u):u\in\mathbb{T})$ at time $k-1$, i.e.
\[Z^{(i,k)}:=\sum_{|u|=k-1}1_{\{V^{(i,k)}(u)\in\R\}}.\]
Define
\[B_k:=\sum_{\ell\geq 1}\delta_{b_k(\ell)}\]
the counting measure formed by $(b_k(\ell):\ell\geq 1)$.
Then
\[\bP(D_\infty^\theta(\R)=\infty)\leq\bP\left(\sum_{k\geq 2}\sum_{i= 1}^{B_k(\R)}Z^{(i,k)}=\infty\right).\]
Observe that there exists $\vep_1\in(0,1)$, for $k$ large enough, we have
\[
\begin{split}
\bP\left(\sum_{i=1}^{B_k(\R)}Z^{(i,k)}>0\right)&=\bP\left(\sum_{i=1}^{B_k(\R)}Z^{(i,k)}>0,B_k(\R)>e^{ck}\right)+\bP\left(\sum_{i=1}^{B_k(\R)}Z^{(i,k)}>0,B_k(\R)\leq e^{ck} \right)\\
&\leq \bP(\log_+B_k(\R)>ck)+e^{ck}\bP(Z_{k-1}>0)\\
&\leq \bP(\log_+B_k(\R)>ck)+Ce^{ck}e^{k(1-\vep_1)\log \E(Z_1(\R))},
\end{split}
\]
where $c$ is a positive constant in $(0,-(1-\vep_1)\log\E(Z_1(\R)))$. Because $x\log(1+y)\leq x\log(1+x)+y\log(1+y)$ for any $x,y\geq0$, we get
\[\E\left(\sum_{|u|=1}e^{\theta V(u)}\log_+Z_1(\R)\right)\leq \E(Z_1(\R)\log_+(Z_1(\R)))+\E\left(\sum_{|u|=1}e^{\theta V(u)}\log_+(\sum_{|u|=1}e^{\theta V(u)})\right)<\infty,\]
which implies that $\E(\log_+(B_1(\R)))<\infty$. We therefore know that
\[\sum_{k=2}^\infty\bP\left(\sum_{i=1}^{B_k(\R)}Z^{(i,k)}>0\right)<\infty.\]
Thus, by the Borel-Cantelli Lemma, we conclude that almost surely for $k$ large enough,
\[\sum_{i=1}^{B_k(\R)}Z^{(i,k)}=0.\]
Because $\E Z_1(\R)<\infty$, we have $Z_1(\R)<\infty$ almost surely,
which implies that for any $i,k\geq 1$,
\[B_k(\R)<\infty\quad\text{and}\quad Z^{(i,k)}<\infty,\quad\text{a.s.}\]
Thus, we know that for any $n>0$, almost surely
\[\sum_{k=2}^n\sum_{i=1}^{B_k(\R)}Z^{(i,k)}<\infty.\]
Thus, we have
\[\bP\left(\sum_{k=2}^\infty\sum_{i=1}^{B_k(\R)}Z^{(i,k)}=\infty\right)=0,\]
which proves that
\[\bP(D_\infty^\theta(\R)<\infty)=1.\qedhere\]
\end{proof}
\begin{proposition}
Assume that there exists $\theta>0$ satisfying $\psi(\theta)<\infty$. If $Z$ is critical and satisfies $\E(Z_1(\R)^2)<\infty$, then $\bP(D_\infty^\theta(\R)=\infty)=1$.
\end{proposition}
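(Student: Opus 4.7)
The strategy is to write
\[
D_\infty^\theta(\R) = 1 + \sum_{k=1}^\infty N_k, \qquad N_k := \sum_{i \neq w^{(k)}} Z^{(i,k)},
\]
with $Z^{(i,k)} := \sum_{|u|=k-1}\mathbf{1}_{\{V^{(i,k)}(u)\in\R\}}$ as in the proof of Proposition \ref{subcritical}, and to show $\sum_k N_k = \infty$ almost surely by applying the second Borel--Cantelli lemma to the events $\{N_k\geq 1\}$. A crucial observation is that $(N_k)_{k\geq 1}$ are independent: indeed, $N_k$ depends only on the triple $((b_k(i))_{i\geq 1}, w^{(k)}, (V^{(i,k)})_{i\geq 1})$, and by construction these triples are i.i.d.\ across $k$. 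In addition, $Z^{(i,k)}$ has the distribution of $Z_{k-1}(\R)$.

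To estimate $\bP(N_k\geq 1)$ from below, I would condition on $B_k(\R):=\#\{i:b_k(i)\in\R\}$. Since $(Z^{(i,k)})_{i\neq w^{(k)}}$ are i.i.d.\ copies of $Z_{k-1}(\R)$ independent of $(B_k(\R),w^{(k)})$,
\[
\bP(N_k\geq 1\mid B_k(\R)) \;=\; 1-(1-p_{k-1})^{B_k(\R)-1}, \qquad p_j:=\bP(Z_j(\R)>0),
\]
and therefore $\bP(N_k\geq 1)\geq \bP(B_k(\R)\geq 2)\,p_{k-1}$. Using the definition of $\hat{\mathcal{L}}$ one computes
\[
\bP(\hat{\mathcal{L}}(\R)\geq 2) \;=\; e^{-\psi(\theta)}\,\E\!\left(\sum_{|u|=1}e^{\theta V(u)}\mathbf{1}_{\{Z_1(\R)\geq 2\}}\right),
\]
which is strictly positive under (\ref{asn}) (implicit here, to rule out the degenerate single-lineage case). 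Hence $\inf_{k}\bP(B_k(\R)\geq 2)=\bP(B_1(\R)\geq 2)>0$.

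Finally, $(Z_n(\R))_{n\geq 0}$ is a critical Galton--Watson process (since $\psi(0)=0$ gives $\E(Z_1(\R))=1$) with finite offspring variance $\sigma_{GW}^2:=\mathrm{Var}(Z_1(\R))$ (finite by $\E(Z_1(\R)^2)<\infty$ and positive by (\ref{asn})), so Kolmogorov's classical survival estimate yields $k\,p_k\to 2/\sigma_{GW}^2>0$ as $k\to\infty$. This gives $\sum_{k}p_{k-1}=\infty$, and combined with the uniform positive lower bound on $\bP(B_k(\R)\geq 2)$ we obtain $\sum_{k}\bP(N_k\geq 1)=\infty$. The second Borel--Cantelli lemma, applied to the independent events $\{N_k\geq 1\}$, then yields $N_k\geq 1$ infinitely often almost surely, so $D_\infty^\theta(\R)=\infty$ almost surely.

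The main obstacle is the honest verification of the independence of $(N_k)_k$ from the construction (\ref{d1}), together with the transfer of positivity from $\bP(Z_1(\R)\geq 2)>0$ to $\bP(B_k(\R)\geq 2)>0$ on the spine side; once these are in place, the remainder is standard, relying on Kolmogorov's $1/n$ survival rate for critical GW processes with positive finite variance. No delicate tail estimates are required, in contrast with the subcritical proof where the exponential decay of $\bP(Z_n(\R)>0)$ was used in combination with the $L\log L$ condition.
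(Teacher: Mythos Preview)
Your approach is essentially the same as the paper's: define the events $\{N_k\geq 1\}$ (the paper calls them $C_k$), bound their probability below by $\bP(B_1(\R)\geq 2)\cdot\bP(Z_{k-1}(\R)>0)$, invoke Kolmogorov's $1/n$ survival estimate for critical Galton--Watson processes, and apply the second Borel--Cantelli lemma using independence across $k$.

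One notational point to correct: as written, $N_k=\sum_{i\neq w^{(k)}}Z^{(i,k)}$ sums over \emph{all} $i\geq 1$, and since the $(V^{(i,k)})_{i\geq 1}$ are i.i.d.\ branching random walks defined for every index $i$, this sum is almost surely infinite whenever $\bP(Z_{k-1}(\R)>0)>0$. The quantity that actually appears in $D_\infty^\theta(\R)$ is $\sum_{i\leq B_k(\R),\,i\neq w^{(k)}}Z^{(i,k)}$, restricting to indices with $b_k(i)\in\R$ (recall $\delta_{-\infty}=0$). Your conditional computation $\bP(N_k\geq 1\mid B_k(\R))=1-(1-p_{k-1})^{B_k(\R)-1}$ already uses this correct restriction, so the argument goes through once the definition is amended. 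The paper makes the analogous restriction explicit only at the very end, writing $D_\infty^\theta(\R)\geq\sum_{k}\sum_{1\leq i\leq B_k(\R),\,i\neq w^{(k)}}Z^{(i,k)}$.
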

\begin{proof}
For $k\geq 1$, we define
\[C_k:=\{B_k(\R)\geq 2\text{ and there exists }i\neq w^{(k)}\text{ such that } Z^{(i,k)}>0\},\]
 where $Z^{(i,k)}$ is defined in Proposition \ref{subcritical}. Let
\[s_k:=(w^{(k)}-1)1_{\{w^{(k)}>1\}}+2\times1_{\{w^{(k)}=1\}}.\]
Observe that
\[\bP(C_k)\geq \bP(B_k(\R)\geq 2,Z^{(s_k,k)}>0).\]
Since that $(B_k(\R),w^{(k)})$ and $(Z^{(i,k)})_{i\geq 1}$ are independent and $(Z^{(i,k)})_{i\geq 1}$ are identical distributed, we have
\[\bP(B_k(\R)\geq 2,Z^{(s_k,k)}>0)=\bP(B_k(\R)\geq 2)\bP(Z^{(s_k,k)}>0)=\bP(B_1(\R)\geq 2)\bP(Z_{k-1}(\R)>0),\]
where the last equation follows from that $(B_k(\R))_{k\geq 1}$ are identical distributed and $Z^{(s_k,k)}$ is distributed as $Z_{k-1}(\R)$.
Because $Z$ is critical and $\E(Z_1(\R)^2)<\infty$, by \cite[P20]{athreya}, we have
\[n\bP(Z_n(\R)>0)\to\frac{2}{\E(Z_1(\R)^2)},\quad \text{as }n\to\infty.\]
Thus, there exists some positive constant $C$ such that
\[\sum_{k=1}^\infty\bP(C_k)\geq C\bP(B_1(\R)\geq 2)\sum_{k=1}^\infty \frac{1}{k},\]
which, by that $\bP(B_1(\R)\geq2)>0$, yields
\[\sum_{k=1}^\infty\bP(C_k)=\infty.\]
Because $(B_k(\R),w^{(k)},Z^{(i,k)},i\geq 1)_{k\geq 1}$ are independent, we obtain that $(C_k)_{k\geq 1}$ are independent. By Borel-Cantelli Lemma, we have almost surely, infinitely many $k$ such that $B_k(\R)\geq 2$ and there exists $i\neq w^{(k)}$ satisfying $Z^{(i,k)}>0$. Observe that
\[D_\infty^\theta(\R)\geq \sum_{k=1}^\infty\sum_{1\leq i\leq B_k(\R),i\neq w^{(k)}}Z^{(i,k)},\]
we therefore have
\[\bP(D_\infty^\theta(\R)=\infty)=1.\qedhere\]
\end{proof}

\subsection{Extremal process as a conditioned point measure}\label{alternative}
In this section, we give an alternative description of $D(\theta)$. We define a total order $<$ in $\mathbb{T}$ called the lexicographical order. For $u,v\in\mathbb{T}$, define $r(u,v):=\sup\{k\geq 0:u_k=v_k\}$ the generation of the most recent common ancestor of $u$ and $v$. Then we say that $u<v$
 if one of conditions below is satisfied:
\begin{enumerate}[1)]
\item $r(u,v)=|u|$ and $|u|<|v|$.
\item $r(u,v)<|u|$ and $u^{(r(u,v)+1)}<v^{(r(u,v)+1)}$.
\end{enumerate}
When we see the branching random walk from the minimal particle located at the maximal position at time $n$ for the lexicographical order, we can get the next lemma, which is similar to Lemma \ref{lemD}.
\begin{lemma}
Let $\theta>0$ such that $\psi(\theta)<\infty$ and $\psi'(\theta)<\infty$. For any non-negative measurable function F, f and $n\geq 1$, we have
\[
\begin{split}
&\E(F(\mathcal{E}_n)f(M_n-n\psi'(\theta))1_{\{Z_n(\R)>0\}})\\
=&e^{n(\psi(\theta)-\theta\psi'(\theta))}\E(e^{-\theta (S_n-n\psi'(\theta))}F(D_n^\theta)f(S_n-n\psi'(\theta))1_{\{D_n^\theta((0,\infty))=0,\bar{D}_n^\theta=0\}}),
\end{split}
\]
where
\[\bar{D}_n^\theta:=\sum_{k=1}^n\sum_{i< w^{(k)}}\sum_{|u|=k-1}1_{\{b_k(i)+V^{(i,k)}(u)=S_k\}}.\]
\end{lemma}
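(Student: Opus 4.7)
The plan is to mirror the proof of Lemma \ref{lemD}, but to replace the uniform averaging over $N_n := \{|u|=n : V(u) = M_n\}$ (achieved there by the factor $1/\mathcal{E}_n(\{0\})$) by the selection of a single distinguished particle, namely the lexicographically minimal element $u^\ast$ of $N_n$. Such a minimum exists and is unique on $\{Z_n(\R)>0\}$, because $\N^n$ is well-ordered by the lex order. Since $\mathcal{E}_n^u = \mathcal{E}_n$ whenever $V(u)=M_n$, we have the pointwise identity
\[F(\mathcal{E}_n)f(M_n-n\psi'(\theta))1_{\{Z_n(\R)>0\}}=\sum_{|u|=n}F(\mathcal{E}_n^u)f(V(u)-n\psi'(\theta))1_{\{V(u)=M_n\}}\prod_{\substack{|v|=n\\ v<u}}1_{\{V(v)<V(u)\}}.\]

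Next, I would apply the change of measure from $\bP$ to $\bP_Q$ on $\mathscr{F}_n$ and then the spine identity $\tilde{\bP}(\xi_n=u|\mathscr{F}_n)=e^{\theta V(u)-n\psi(\theta)}/W_n^\theta$ to collapse the sum over $|u|=n$ onto the spine, just as in Lemma \ref{lemD}. The expectation becomes
\[e^{n\psi(\theta)}\tilde{\E}\left[e^{-\theta V(\xi_n)}F(\mathcal{E}_n^{\xi_n})f(V(\xi_n)-n\psi'(\theta))1_{\{\xi_n\in N_n\}}\prod_{\substack{|v|=n\\ v<\xi_n}}1_{\{V(v)<V(\xi_n)\}}\right].\]
I then invoke the time-reversal bijection already established in Lemma \ref{lemD}: under the distributional identification $((V(u))_{|u|=n},\xi_n)$ under $\tilde{\bP}$ with $((V^{(n)}(u))_{|u|=n},w_n^{(n)})$ under $\bP$, the substitutions $V(\xi_n)\leftrightarrow S_n$, $\mathcal{E}_n^{\xi_n}\leftrightarrow D_n^\theta$ and $\{\xi_n\in N_n\}\leftrightarrow\{D_n^\theta((0,\infty))=0\}$ all transfer exactly as in that proof.

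The only new ingredient, and the main obstacle I expect, is to translate the lex-order product of indicators into a condition on $D_n^\theta$. For $u\in\N^n\setminus\{w_n^{(n)}\}$, set $k:=r(u,w_n^{(n)})$ and $k':=n-k$; the explicit formula of Lemma \ref{lemD} gives $V^{(n)}(u)-S_n=b_{k'}(u^{(k+1)})+V^{(u^{(k+1)},k')}(u|_{k'-1})-S_{k'}$, so such a $u$ contributes an atom at $0$ to $D_n^\theta$ exactly when $b_{k'}(u^{(k+1)})+V^{(u^{(k+1)},k')}(u|_{k'-1})=S_{k'}$. Simultaneously, $u<w_n^{(n)}$ in lex order reduces by definition to $u^{(k+1)}<(w_n^{(n)})^{(k+1)}=w^{(n-k)}=w^{(k')}$. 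Since the bijection fixes the underlying label set $\N^n$, the lex order is preserved between the two pictures, and a straightforward bijection of triples $(k',i,u')$ with $k'\in\{1,\ldots,n\}$, $i<w^{(k')}$, $|u'|=k'-1$ shows that the event $\{\exists\,v<\xi_n,\,|v|=n:\,V(v)=V(\xi_n)\}$ under $\tilde{\bP}$ matches exactly the event $\{\bar{D}_n^\theta\geq 1\}$ under $\bP$. Hence the product of indicators collapses to $1_{\{\bar{D}_n^\theta=0\}}$, and rewriting $e^{n\psi(\theta)-\theta S_n}=e^{n(\psi(\theta)-\theta\psi'(\theta))}e^{-\theta(S_n-n\psi'(\theta))}$ yields the claimed identity.
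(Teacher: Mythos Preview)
Your proposal is correct and follows essentially the same route as the paper's own proof: select the lexicographically minimal maximizer, pass to $\tilde{\bP}$ via the spine identity, and use the time-reversal identification from Lemma \ref{lemD} to match $(\mathcal{E}_n^{\xi_n},\sum_{|u|=n,u<\xi_n}1_{\{V(u)=V(\xi_n)\}},V(\xi_n))$ under $\tilde{\bP}$ with $(D_n^\theta,\bar{D}_n^\theta,S_n)$ under $\bP$. Your explicit verification that the lex-order condition $u<w_n^{(n)}$ corresponds to $u^{(k+1)}<w^{(k')}$, together with the bijection between $u\in\N^n\setminus\{w_n^{(n)}\}$ and triples $(k',i,u')$, actually spells out what the paper compresses into the phrase ``with the same arguments as in the proof of Lemma \ref{lemD}''.
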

\begin{proof}
Define $m_n$ to be the minimal element of $N_n$ for the lexicographical order. By spine decomposition theorem, with similar arguments as in the proof of Lemma \ref{lemD}, we have
\[
\begin{split}
&\E[F(\mathcal{E}_n)f(M_n-\psi'(\theta)n)1_{\{Z_n(\R)>0\}}]\\
=&\E\left(\sum_{|u|=n}F(\mathcal{E}_n)1_{\{u=m_n\}}f(M_n-\psi'(\theta)n)1_{\{Z_n(\R)>0\}}\right)\\
=&\tilde{\E}(e^{-\theta V(\xi_n)+n\psi(\theta)}F(\mathcal{E}_n^{\xi_n})1_{\{\mathcal{E}_n^{\xi_n}((0,\infty))=0\}}1_{\{\sum_{|u|=n,u<\xi_n}1_{\{V(u)=V(\xi_n)\}}=0\}}f(V(\xi_n)-n\psi'(\theta))).
\end{split}
\]
 Observe that
\[
\begin{split}
&\sum_{|u|=n,u<\xi_n}1_{\{V(x)=V(\xi_n)\}}\\
=&\sum_{k=1}^{n}\sum_{i=1}^{\xi^{(k)}-1}\sum_{|u|=n-k}1_{\{V(\xi_{k-1}i)-V(\xi_{k-1})+V(\xi_{k-1}iu)-V(\xi_{k-1}i)=V(\xi_n)-V(\xi_{k-1})\}}.
\end{split}
\]
With the same arguments as in the proof of Lemma \ref{lemD}, we have that $(D_n^\theta,\bar{D}_n^\theta,S_n)$ under $\bP$ is distributed as
\[(\mathcal{E}_n^{\xi_n},\sum_{|u|=n,u<\xi_n}1_{\{V(u)=V(\xi_n)\}},V(\xi_n))\]
under $\tilde{\bP}$.
Therefore
\[
\begin{split}
&\tilde{\E}(e^{-\theta V(\xi_n)+n\psi(\theta)}F(\mathcal{E}_n^{\xi_n})1_{\{\mathcal{E}_n^{\xi_n}((0,\infty))=0\}}1_{\{\sum_{|u|=n,u<\xi_n}1_{\{V(u)= V(\xi_n)\}}=0\}}f(V(\xi_n)-n\psi'(\theta)))\\
=&\E(e^{-\theta S_n+n\psi(\theta)}F(D_n^\theta)f(S_n-n\psi'(\theta))1_{\{D_n^\theta((0,\infty))=0\}}1_{\{\bar{D}_n^\theta=0 \}})\\
=&e^{n(\psi(\theta)-\theta\psi'(\theta))}\E(e^{-\theta S_n+n\theta\psi'(\theta)}F(D_n^\theta)f(S_n-n\psi'(\theta))1_{\{D_n^\theta((0,\infty))=0\}}1_{\{\bar{D}_n^\theta=0\}}),
\end{split}
\]
which completes the proof.
\end{proof}
\begin{remark}\label{another case}
With the same arguments as in the proof of Lemma \ref{part1}, we can also prove
\[\E\left(\frac{1}{D_\infty^\theta(\{0\})}e^{-D_\infty^\theta(\phi)}1_{\{D_\infty^\theta((0,\infty))=0\}}\right)=\E(e^{-D_\infty^\theta(\phi)}1_{\{D_\infty^\theta((0,\infty))=0\}}1_{\{\bar{D}_\infty^\theta=0\}}).\]
By the proof of Theorems \ref{th2} and \ref{th1}, we get
\[C(\theta)=\bP(D_\infty^\theta((0,\infty))=0,\bar{D}_\infty^\theta=0)\]
and $D(\theta)$ is distributed as $(D_\infty^\theta|D_\infty^\theta((0,\infty))=0,\bar{D}_\infty^\theta=0).$
\end{remark}
\subsection{Continuity of the extremal process}\label{limitprocess}
In this section, we prove that $D(\theta)$ in Theorem \ref{th1} is continuous in distribution with respect to $\theta$ under some stronger conditions.
First, we introduce an important lemma.
\begin{lemma}\label{continuity}
Fix $\theta_0>0$. If there exists $\delta\in(0,\theta_0)$ satisfying for any $\theta\in[\theta_0-\delta,\theta_0+\delta]$, $\theta$ satisfies assumptions (\ref{as1}), (\ref{as2}) and
\[\E\left[\sum_{|u|=1}e^{\theta V(u)}\left(\log_+\left(\sum_{|u|=1}e^{\theta V(u)}\right)\right)^{1+\delta}\right]<\infty.\]
There exists $\vep>0$ s.t. as $n\to\infty$, we have
\[\sup_{\theta\in[\theta_0-\delta,\theta_0+\delta]}\bP(A_n(\theta,\vep)^c)\to0,\]
where
\[A_n(\theta,\vep):=\left\{\max_{\ell\geq n}\frac{\max_{k\geq 1}\{b_\ell(k)+\max_{|u|=\ell-1}\{V^{(k,\ell)}(u)\}\}-S_{\ell}}{\ell}<-\vep\right\}\]
and we should emphasize that $b_\ell(k)$ and $S_\ell$ depend on $\theta$ even if we do not give them a subscript of $\theta$.
\end{lemma}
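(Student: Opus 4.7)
The plan is to mimic the proof of Lemma~\ref{well} while replacing each qualitative almost sure statement by a quantitative bound whose rate is uniform in $\theta\in[\theta_0-\delta,\theta_0+\delta]$. Let $\mathcal{Y}$ denote the $\sigma$-field generated by $(b_\ell(k),w^{(\ell)})_{\ell,k\geq 1}$. The Markov-type estimate~(\ref{conexp}) from that proof applies verbatim:
\[\bP(A_n(\theta,\vep)^c\mid\mathcal{Y})\leq\sum_{\ell\geq n}e^{(\ell-1)\psi(\theta)+\vep\theta\ell}\,e^{-\theta S_\ell}\sum_{k\geq 1}e^{\theta b_\ell(k)}.\]
We split the analysis along two ``good events'' at each level~$\ell$: $G_\ell(\theta,\vep):=\{\sum_ke^{\theta b_\ell(k)}\leq e^{\vep\theta\ell}\}$ and $H_\ell(\theta,\vep'):=\{S_\ell\geq(\psi'(\theta)-\vep')\ell\}$ for a small fixed $\vep'>0$, and bound
\[\bP(A_n(\theta,\vep)^c)\leq\bP\bigl(\textstyle\bigcup_{\ell\geq n}G_\ell^c\bigr)+\bP\bigl(\textstyle\bigcup_{\ell\geq n}H_\ell^c\bigr)+\E\bigl[\textstyle\sum_{\ell\geq n}1_{G_\ell\cap H_\ell}\bP(\max_kM^{(k,\ell)}-S_\ell\geq-\vep\ell\mid\mathcal{Y})\bigr].\]

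On $G_\ell\cap H_\ell$ the displayed conditional bound is dominated by $e^{-\psi(\theta)}e^{-\ell[\theta\psi'(\theta)-\psi(\theta)-(2\vep+\vep')\theta]}$. Since $\theta\mapsto\theta\psi'(\theta)-\psi(\theta)$ is continuous and, by~(\ref{as1}), strictly positive on the compact interval, one may choose $\vep,\vep'$ small enough so that this exponent is uniformly bounded above by a strictly negative constant $-c$; summing in $\ell$ then yields an $O(e^{-cn})$ bound on the third term, uniform in $\theta$.

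For the first complementary term, Markov's inequality gives $\bP(G_\ell^c)=\bP(\log_+X_\theta>\vep\theta\ell)\leq(\vep\theta\ell)^{-(1+\delta)}\E[(\log_+X_\theta)^{1+\delta}]$ with $X_\theta:=\sum_ke^{\theta b_1(k)}$, so summing in $\ell$ yields an $O(n^{-\delta})$ bound, uniform in $\theta$ once the moment is uniformly bounded. For the second, a Chernoff bound on the left tail of $S_\ell$ would require $\psi(\theta-\lambda)<\infty$ for some small $\lambda>0$, which is not guaranteed near the endpoint $\theta_0-\delta$; instead we use a dyadic variance argument. Chebyshev at the scales $\ell=2^j$ gives $\bP(|S_{2^j}/2^j-\psi'(\theta)|>\vep'/2)\leq\sigma^2(\theta)/(2^{j-1}(\vep')^2)$, and Kolmogorov's maximal inequality controls the supremum within each dyadic block; summing yields $\bP(\cup_{\ell\geq n}H_\ell^c)=O(1/n)$, uniformly in $\theta$ since $\sigma^2(\theta)$ is continuous on the compact interval by~(\ref{as2}).

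The main obstacle is the uniform moment bound $\sup_{\theta\in[\theta_0-\delta,\theta_0+\delta]}\E[(\log_+X_\theta)^{1+\delta}]<\infty$. Via the change of measure defining $X_\theta$, this is equivalent to the uniform finiteness of $e^{-\psi(\theta)}\E[\sum_{|u|=1}e^{\theta V(u)}(\log_+\sum_{|u|=1}e^{\theta V(u)})^{1+\delta}]$; a natural route is the pointwise convexity bound $e^{\theta V}\leq e^{(\theta_0-\delta)V}+e^{(\theta_0+\delta)V}$ combined with $\log_+(a+b)\leq\log 2+\log_+a+\log_+b$ to dominate by quantities involving the endpoints $\theta_0\pm\delta$, which are finite by hypothesis, with cross terms controlled via $\log_+y\leq C_\alpha y^\alpha$ and H\"older.
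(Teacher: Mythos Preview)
Your proposal is correct and follows the same three-term decomposition as the paper (exponential main term on the good event, $\log_+$ moment term for $G_\ell^c$, random walk left tail for $H_\ell^c$). The paper's handling of the uniform moment bound is the same as yours at bottom: it asserts continuity of $\theta\mapsto g(\theta):=\E\bigl[\sum_{|u|=1}e^{\theta V(u)}(\log_+\sum_{|u|=1}e^{\theta V(u)})^{1+\delta}\bigr]$ on the compact interval by dominated convergence, and the implicit dominating function is precisely your endpoint construction. The cross terms $A(\log_+B)^{1+\delta}$ are in fact controlled by the elementary inequality $x(\log_+y)^{1+\delta}\leq x(\log_+x)^{1+\delta}+y(\log_+y)^{1+\delta}$ (split on whether $y\leq x$ or $y>x$), so you can drop the H\"older step.

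The one genuine difference is the random walk left tail. The paper \emph{does} use a Chernoff bound: via many-to-one,
\[\bP(S_\ell<(\psi'(\theta)-\vep)\ell)\leq \E\bigl(e^{-\vep_1(S_\ell-(\psi'(\theta)-\vep)\ell)}\bigr)=e^{-\ell[\psi(\theta)-\psi(\theta-\vep_1)-\vep_1(\psi'(\theta)-\vep)]},\]
and the exponent is made uniformly positive through a mean-value estimate $\psi'(\theta)-[\psi(\theta)-\psi(\theta-\vep_1)]/\vep_1\leq\vep_1\sup_\theta\psi''(\theta)$. Your concern about $\psi(\theta-\vep_1)$ at the left endpoint is valid; the paper handles it by taking $\vep_1\leq\delta/2$ and (somewhat implicitly) carrying out this step on the subinterval $[\theta_0-\delta/2,\theta_0+\delta/2]$, which suffices for the downstream continuity application. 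Your dyadic Chebyshev--Kolmogorov argument is a legitimate alternative that sidesteps the endpoint issue entirely, trading exponential for $O(1/n)$ decay, which is harmless here.
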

\begin{proof}
With the same arguments as in Lemma \ref{well}, we have
\[
\begin{split}
&\bP(A_n(\theta,\vep)^c)\\
\leq &\bP\left(\max_{\ell\geq n}\frac{\max_{k\geq 1}\{b_\ell(k)+\max_{|u|=\ell-1}\{V^{(k,\ell)}(u)\}\}-S_{\ell}}{\ell}\geq-\vep\right)\\
 \leq &e^{\psi(\theta)}\sum_{\ell\geq n}e^{-\ell(\theta\psi'(\theta)-\psi(\theta)-c-2\vep\theta)}+\sum_{\ell\geq n}\bP\left(\log_+\left(\sum_{k\geq 1}e^{\theta b_1(k)}\right)>c\ell\right)+\sum_{\ell\geq n}\bP(S_\ell<(\psi'(\theta)-\vep)\ell),
 \end{split}
 \]
 where $c$ is a positive constant whose value is fixed later.
 Observe that $\theta\psi'(\theta)-\psi(\theta)$ is increasing on $[\theta_0-\delta,\theta_0+\delta]$ by convexity and is positive, then
 \[c_1:=\inf_{\theta\in[\theta_0-\delta,\theta_0+\delta]}\{\theta\psi'(\theta)-\psi(\theta)\}=(\theta_0-\delta)\psi'(\theta_0-\delta)-\psi(\theta_0-\delta)\in(0,\infty).\]
 With the same arguments as above, we can get
 \[c_2:=\inf_{\theta\in[\theta_0-\delta,\theta_0+\delta]}\{\psi'(\theta)-\frac{\psi(\theta)}{\theta}\}\in[\frac{c_1}{\theta_0+\delta},\frac{(\theta_0+\delta)\psi'(\theta_0+\delta)-\psi(\theta_0+\delta)}{\theta_0-\delta}]\subset(0,\infty).\]
 Meanwhile, using the continuity of $\psi''$ on $[\theta_0-\delta,\theta_0+\delta]$ and that $\psi''(\theta)>0$ for any $\theta\in[\theta_0-\delta,\theta_0+\delta]$, we know that
 \[c_3:=\sup_{\theta\in[\theta_0-\delta,\theta_0+\delta]}\psi''(\theta)\in(0,\infty).\]
 For $\vep_1\in(0,\frac{\delta}{2})$, by mean-valued theorem, for any $\theta\in[\theta_0-\frac{\delta}{2},\theta_0+\frac{\delta}{2}]$, we have
 \[\psi'(\theta)-\frac{\psi(\theta)-\psi(\theta-\vep_1)}{\vep_1}=\psi'(\theta)-\psi'(\theta(\vep_1))=(\theta-\theta(\vep_1))\psi''(\theta_{\vep_1})\leq \vep_1c_3,\]
 where $\theta(\vep_1)$ is a constant in $(\theta-\vep_1,\theta)$ and $\theta_{\vep_1}$ is a constant in $(\theta(\vep_1),\theta)$.
 By many-to-one formula and the spine decomposition theorem, we have
 \[
 \begin{split}
 \bP(S_\ell<(\psi'(\theta)-\vep)\ell)\leq \E(e^{\vep_1(\psi'(\theta)-\vep)\ell-\vep_1 S_\ell})&=e^{\vep_1(\psi'(\theta)-\vep)\ell}\E\left(\sum_{|u|=\ell}e^{(\theta-\vep_1)V(u)-\ell\psi(\theta)}\right)\\
 &=e^{-\ell(\psi(\theta)-\psi(\theta-\vep_1)-\vep_1(\psi'(\theta)-\vep))}.
 \end{split}
 \]
Set
 \[c=\frac{1}{2}c_1,\quad \vep_1=\min\{\frac{1}{8}\frac{c_2}{c_3},\frac{\delta}{2}\}\quad\text{and}\quad\vep=\frac{1}{6}c_2,\]
 for any $\theta\in[\theta_0-\delta,\theta_0+\delta]$, we have
 \[\theta\psi'(\theta)-\psi(\theta)-c-2\theta\vep\geq\frac{1}{6}c_2(\theta_0-\delta)\quad\text{and}\quad \psi(\theta)-\psi(\theta-\vep_1)-\vep_1(\psi'(\theta)-\vep)\geq\frac{1}{24}\vep_1c_2. \]
Observe that
\[\bP\left(\log_+\left(\sum_{k\geq 1}e^{\theta b_1(k)}\right)>c\ell\right)\leq \frac{1}{c^{1+\delta}\ell^{1+\delta}}\E\left(\sum_{|u|=1}e^{\theta V(u)}\left(\log_+\left(\sum_{|u|=1}e^{\theta V(u)}\right)\right)^{1+\delta}\right).\]
Let $g(\theta):=\E(\sum_{|u|=1}e^{\theta V(u)}(\log_+(\sum_{|u|=1}e^{\theta V(u)}))^{1+\delta}),\theta\in[\theta_0-\delta,\theta_0+\delta]$. By dominated convergence theorem, it is straightforward to see that $g$ is continuous on $[\theta_0-\delta,\theta_0+\delta]$, which yields
\[c_4:=\sup_{\theta\in[\theta_0-\delta,\theta_0+\delta]}g(\theta)\in(0,\infty).\]
Besides, by the continuity of $\psi$ on $[\theta_0-\delta,\theta_0+\delta]$, we know that
\[c_5:=\sup_{\theta\in[\theta_0-\delta,\theta_0+\delta]}\psi(\theta)\in(0,\infty).\]
In final, for $n$ large enough, for any $\theta\in[\theta_0-\delta,\theta_0+\delta]$, we have
\[
\bP(A_n(\theta,\vep)^c)\leq e^{c_5}\sum_{\ell\geq n}e^{-\frac{1}{6}c_2(\theta_0-\delta)\ell} +\sum_{\ell\geq n} \frac{c_4}{c^{1+\delta}\ell^{1+\delta}}+\sum_{\ell\geq n}e^{-\frac{1}{24}\vep_1c_2\ell},
\]
which concludes that
\[\lim_{n\to\infty}\sup_{\theta\in[\theta_0-\delta,\theta_0+\delta]}\bP(A_n(\theta,\vep)^c)\to0.\qedhere\]
\end{proof}
Now we use the Lemma \ref{continuity} to give the proof of continuity.
\begin{proposition}
Consider a non-lattice branching random walk and fix $\theta_0>0$. If there exists $\delta\in(0,\theta_0)$ satisfying for any $\theta\in[\theta_0-\delta,\theta_0+\delta]$, $\theta$ satisfies assumptions (\ref{as1}), (\ref{as2}) and
\[\E\left[\sum_{|u|=1}e^{\theta V(u)}\left(\log_+\left(\sum_{|u|=1}e^{\theta V(u)}\right)\right)^{1+\delta}\right]<\infty,\]
then $(D_\infty^\theta|D_\infty^\theta((0,\infty))=0,\bar{D}_\infty^\theta=0)$ is continuous in distribution at $\theta_0$.
\end{proposition}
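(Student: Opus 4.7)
The plan is to approximate $D_\infty^\theta$ by the finite-generation version $D_N^\theta$ uniformly in $\theta \in [\theta_0 - \delta, \theta_0 + \delta]$ via Lemma \ref{continuity}, then put all the $\theta$-dependent constructions on a single probability space. On that common probability space the discrete spine choices are locally constant at $\theta_0$ almost surely, so the whole integrand is a.s.\ continuous at $\theta_0$, and dominated convergence delivers the result. By Remark \ref{another case} it suffices to show that, for every non-negative $\phi \in C_c(\R)$,
\[
\Psi_\phi(\theta) := \E\bigl[e^{-D_\infty^\theta(\phi)} 1_{\{D_\infty^\theta((0,\infty)) = 0\}} 1_{\{\bar{D}_\infty^\theta = 0\}}\bigr]
\]
is continuous at $\theta_0$, since the denominator $C(\theta) = \Psi_0(\theta)$ is the special case $\phi \equiv 0$ and satisfies $C(\theta_0) > 0$ by Lemma \ref{lem1}.

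For the finite-generation reduction, given $\eta > 0$ Lemma \ref{continuity} provides $\vep > 0$ and $N$ large enough that $\bP(A_N(\theta, \vep)^c) < \eta$ for every $\theta \in [\theta_0 - \delta, \theta_0 + \delta]$, with $N$ further enlarged so that $-\vep N < \inf \mathrm{supp}(\phi)$. On $A_N(\theta, \vep)$ every atom of $D_\infty^\theta$ coming from a generation $\ell > N$ lies strictly below $-\vep N \leq 0$, so such atoms contribute neither to $D_\infty^\theta(\phi)$, nor to $D_\infty^\theta((0, \infty))$, nor to $\bar{D}_\infty^\theta$. Consequently $|\Psi_\phi(\theta) - \Psi_\phi^N(\theta)| \leq 2 \eta$ uniformly in $\theta$, where
\[
\Psi_\phi^N(\theta) := \E\bigl[e^{-D_N^\theta(\phi)} 1_{\{D_N^\theta((0,\infty)) = 0\}} 1_{\{\bar{D}_N^\theta = 0\}}\bigr],
\]
and the problem reduces to showing that $\Psi_\phi^N$ is continuous at $\theta_0$ for each fixed $N$.

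For the truncated continuity, realize on a common probability space independent i.i.d.\ copies $(\mathcal{L}_k)_{k=1}^N$ of the unbiased offspring point process with atoms $(z_i^{(k)})_{i \geq 1}$, independent i.i.d.\ branching random walks $V^{(i,k)}$, and independent uniforms $U_k$ on $[0,1]$. For every $\theta \in [\theta_0 - \delta, \theta_0 + \delta]$, define $w_k^\theta$ as the smallest index $j$ with $\sum_{i \leq j} e^{\theta z_i^{(k)}} \geq U_k \sum_i e^{\theta z_i^{(k)}}$, and build $S_k^\theta, D_N^\theta, \bar{D}_N^\theta$ as in Section \ref{pre}. A size-bias computation gives
\[
\Psi_\phi^N(\theta) = \E\Bigl[e^{-D_N^\theta(\phi)} 1_{\{D_N^\theta((0,\infty)) = 0\}} 1_{\{\bar{D}_N^\theta = 0\}} \prod_{k=1}^N W_k^\theta\Bigr], \qquad W_k^\theta := e^{-\psi(\theta)} \sum_i e^{\theta z_i^{(k)}}.
\]
For almost every $(\mathcal{L}_k)_k$ each map $\theta \mapsto \sum_{i \leq j} e^{\theta z_i^{(k)}}/\sum_i e^{\theta z_i^{(k)}}$ is continuous, and since $U_k$ is continuously distributed and independent of $\mathcal{L}_k$, almost surely $U_k$ avoids the countable set of boundary values at $\theta_0$. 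Hence on a random open neighborhood of $\theta_0$ one has $w_k^\theta \equiv w_k^{\theta_0}$, therefore $S_k^\theta \equiv S_k^{\theta_0}$, and the atoms of $D_N^\theta, \bar{D}_N^\theta$ are identical to those at $\theta_0$, so the whole integrand is a.s.\ continuous in $\theta$ at $\theta_0$.

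To apply dominated convergence, convexity of $\theta \mapsto e^{\theta z}$ gives $e^{\theta z} \leq e^{(\theta_0 - \delta) z} + e^{(\theta_0 + \delta) z}$ for $\theta$ in the interval, whence $W_k^\theta \leq C (W_k^{\theta_0 - \delta} + W_k^{\theta_0 + \delta})$ with $C$ depending only on $\psi$ on the compact interval. By independence across $k$ and $\E W_k^{\theta_0 \pm \delta} = 1$, the product $\prod_{k=1}^N W_k^\theta$ is dominated in $L^1$ by a random variable of mean at most $(2C)^N$, and together with the pointwise bound $e^{-D_N^\theta(\phi)} 1_{\{\cdots\}} \leq 1$ this yields $\Psi_\phi^N(\theta) \to \Psi_\phi^N(\theta_0)$ by dominated convergence. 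Letting $\eta \to 0$ proves continuity of $\Psi_\phi$ at $\theta_0$, and the same argument with $\phi \equiv 0$ yields continuity of $C(\theta)$; dividing proves the proposition. The main obstacle is that the indicator functionals $1_{\{D_N^\theta((0,\infty)) = 0\}}$ and $1_{\{\bar{D}_N^\theta = 0\}}$ are not continuous for the vague topology, so an ordinary weak-convergence argument is insufficient; the coupling through the auxiliary uniforms $U_k$ sidesteps this precisely by making the discrete spine selections locally constant at $\theta_0$ almost surely, reducing the whole question to the routine $L^1$-domination of the Radon--Nikodym factor $\prod_k W_k^\theta$.
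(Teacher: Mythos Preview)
Your proof is correct but follows a genuinely different route from the paper's. Both arguments begin identically, using Lemma \ref{continuity} to reduce the question to continuity of the truncated functional $\Psi_\phi^N(\theta)$ at $\theta_0$, uniformly in $\theta$. From there the paper takes a shortcut you do not use: via Remark \ref{another case} it passes to the $1/D_N^\theta(\{0\})$ form and then applies Lemma \ref{lemD} to rewrite
\[
\E\!\left(\frac{1}{D_N^\theta(\{0\})}e^{-D_N^\theta(\phi)}1_{\{D_N^\theta((0,\infty))=0\}}\right)=\E\!\left(e^{\theta M_N-N\psi(\theta)}e^{-\mathcal{E}_N(\phi)}1_{\{Z_N(\R)>0\}}\right),
\]
an expression in which $M_N$, $\mathcal{E}_N$ and $Z_N(\R)$ are $\theta$-free, so all the $\theta$-dependence sits in the smooth factor $e^{\theta M_N-N\psi(\theta)}$ and continuity follows by a one-line dominated convergence. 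You instead construct an explicit simultaneous coupling of the spine decompositions for all $\theta$, sampling unbiased point processes and selecting the spine child with an auxiliary uniform; since that selection is locally constant at $\theta_0$ almost surely, the discontinuous indicators are harmless, and dominated convergence on the Radon--Nikodym product $\prod_k W_k^\theta$ finishes. The paper's route is shorter and leverages machinery already in place; your coupling is more self-contained and addresses head-on the obstacle you correctly identify (discontinuity of the indicators in the vague topology), and it would carry over to situations where no analogue of Lemma \ref{lemD} is available.
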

\begin{proof}
It is sufficient to prove that for any non-negative function $\phi\in C_c(\R)$,
\[\lim_{\theta\to\theta_0}\E(e^{-D_\infty^\theta(\phi)}1_{\{D_\infty^\theta((0,\infty))=0,\bar{D}_\infty^\theta=0\}})=\E(e^{-D_\infty^{\theta_0}(\phi)}1_{\{D_\infty^{\theta_0}((0,\infty))=0,\bar{D}_\infty^{\theta_0}=0\}}).\]
For $\vep>0$, $\theta\in[\theta_0-\delta,\theta_0+\delta]$, let $A_n(\theta,\vep)$ be defined as in Lemma \ref{continuity}.
Observe that there exists $N_1\in\N$ s.t. $-\vep N_1<\inf\text{supp}(\phi)$, then for $\theta\in[\theta_0-\delta,\theta_0+\delta]$ and $n\geq N_1$, by Remark \ref{another case}, we have
 \[
 \begin{split}
 &\E(e^{-D_\infty^\theta(\phi)}1_{\{D_\infty^\theta((0,\infty))=0,\bar{D}_\infty^\theta=0\}})\\
 =&\E\left(\frac{1}{D_\infty^\theta(\{0\})}e^{-D_\infty^\theta(\phi)}1_{\{D_\infty^\theta((0,\infty))=0\}}\right)\\
 =&\E\left[\frac{1}{D_\infty^\theta(\{0\})}e^{-D_\infty^\theta(\phi)}1_{\{D_\infty^\theta((0,\infty))=0\}}1_{A_n(\theta,\vep)}\right]+\E\left[\frac{1}{D_\infty^\theta(\{0\})}e^{-D_\infty^\theta(\phi)}1_{\{D_\infty^\theta((0,\infty))=0\}}1_{A_n(\theta,\vep)^c}\right]\\
 =&\E\left(\frac{1}{D_n^\theta(\{0\})}e^{-D_n^\theta(\phi)}1_{\{D_n^\theta((0,\infty))=0\}}\right)-\E\left(\frac{1}{D_n^\theta(\{0\})}e^{-D_n^\theta(\phi)}1_{\{D_n^\theta((0,\infty))=0\}}1_{A_n(\theta,\vep)^c}\right)\\
 &+\E\left(\frac{1}{D_\infty^\theta(\{0\})}e^{-D_\infty^\theta(\phi)}1_{\{D_\infty^\theta((0,\infty))=0\}}1_{A_n(\theta,\vep)^c}\right),
 \end{split}
 \]
 which yields that
 \[
 \begin{split}
 &|\E(e^{-D_\infty^\theta(\phi)}1_{\{D_\infty^\theta((0,\infty))=0,\bar{D}_\infty^\theta=0\}})-\E(e^{-D_\infty^{\theta_0}(\phi)}1_{\{D_\infty^{\theta_0}((0,\infty))=0,\bar{D}_\infty^{\theta_0}=0\}})|\\
 \leq &2\bP(A_n(\theta,\vep)^c)+\left|\E\left(\frac{1}{D_n^\theta(\{0\})}e^{-D_n^\theta(\phi)}1_{\{D_n^\theta((0,\infty))=0\}}\right)-\E(e^{-D_\infty^{\theta_0}(\phi)}1_{\{D_\infty^{\theta_0}((0,\infty))=0,\bar{D}_\infty^{\theta_0}=0\}})\right|.
 \end{split}
 \]
 By Lemma \ref{lemD}, we have
 \[\E\left(\frac{1}{D_n^\theta(\{0\})}e^{-D_n^\theta(\phi)}1_{\{D_n^\theta((0,\infty))=0\}}\right)=\E(e^{\theta M_n-n\psi(\theta)}e^{-\mathcal{E}_n(\phi)}1_{\{Z_n(\R)>0\}}).\]
 Observe that $\psi$ is continuous on $[\theta_0-\delta,\theta_0+\delta]$, by dominated convergence theorem, we have
 \[
 \begin{split}
 \lim_{\theta\to\theta_0}\E\left(e^{\theta M_n-n\psi(\theta)}e^{-\mathcal{E}_n(\phi)}1_{\{Z_n(\R)>0\}}\right)&=\E\left(e^{\theta_0M_n-n\psi(\theta_0)}e^{-\mathcal{E}_n(\phi)}1_{\{Z_n(\R)>0\}}\right)\\
 &=\E\left(\frac{1}{D_n^{\theta_0}(\{0\})}e^{-D_n^{\theta_0}(\phi)}1_{\{D_n^{\theta_0}((0,\infty))=0\}}\right),
 \end{split}
 \]
 which implies that
 \[
 \begin{split}
 \lim_{n\to\infty}\lim_{\theta\to\theta_0}\E\left(\frac{1}{D_n^\theta(\{0\})}e^{-D_n^\theta(\phi)}1_{\{D_n^\theta((0,\infty))=0\}}\right)&=\E\left(\frac{1}{D_\infty^{\theta_0}(\{0\})}e^{-D_\infty^{\theta_0}(\phi)}1_{\{D_\infty^{\theta_0}((0,\infty))=0\}}\right)\\
 &=\E(e^{-D_\infty^{\theta_0}(\phi)}1_{\{D_\infty^{\theta_0}((0,\infty))=0,\bar{D}_\infty^{\theta_0}=0\}}).
 \end{split}
 \]
 On the other hand, by Lemma \ref{continuity}, we have
 \[\lim_{n\to\infty}\sup_{\theta\in[\theta_0-\delta,\theta_0+\delta]}\bP(A_n(\theta,\vep)^c)=0.\]
Thus, we have
\[\lim_{\theta\to\theta_0}\E(e^{-D_\infty^\theta(\phi)}1_{\{D_\infty^\theta((0,\infty))=0,\bar{D}_\infty^\theta=0\}})=\E(e^{-D_\infty^{\theta_0}(\phi)}1_{\{D_\infty^{\theta_0}((0,\infty))=0,\bar{D}_\infty^{\theta_0}=0\}}),\]
which completes the proof.
\end{proof}
\noindent\textbf{Acknowledgements:}
I wish to thank my supervisor Bastien Mallein for introducing me to this subject and constantly finding the time for useful discussions and advice. I am supported by China Scholarship Council (No.202306040031).

\end{document}